\documentclass[12pt]{amsart}
\usepackage{graphicx}
\usepackage{amssymb}
\usepackage{amsfonts}
\usepackage{hyperref}
\usepackage{cancel}

\usepackage[normalem]{ulem}

\usepackage[margin=3cm]{geometry}
\usepackage{mathrsfs}
\setlength{\footskip}{1cm}
\swapnumbers
\sloppy
\usepackage[usenames,dvipsnames,svgnames,table]{xcolor}

\ProvidesPackage{dsfont}
  [1995/08/01 v0.1 Double stroke roman fonts]

\def\ds@whichfont{dsrom}
\DeclareOption{sans}{\def\ds@whichfont{dsss}}
\ProcessOptions\relax

\DeclareMathAlphabet{\mathds}{U}{\ds@whichfont}{m}{n}

\numberwithin{equation}{section}
\vfuzz2pt 
\hfuzz2pt 
\newtheorem{theorem}{Theorem}
\newtheorem{lemma}[theorem]{Lemma}
\newtheorem{corollary}[theorem]{Corollary}

\newtheorem{proposition}[theorem]{Proposition}
\theoremstyle{definition}
\newtheorem{definition}[theorem]{Definition}
\newtheorem{assumption}[theorem]{Assumption}
\newtheorem{remark}[theorem]{Remark}

\theoremstyle{plain}

\numberwithin{figure}{section} 
\theoremstyle{plain}
\theoremstyle{plain}
\theoremstyle{remark}
\newtheorem*{acknowledgement*}{Acknowledgement}
\theoremstyle{example}


\newcommand{\cA}{{\mathcal A}}

\newcommand{\cE}{{\mathcal E}}
\newcommand{\cF}{{\mathcal F}}

\newcommand{\cP}{{\mathcal P}}

\newcommand{\cR}{{\mathcal R}}
\newcommand{\cS}{{\mathcal S}}

\newcommand{\cX}{{\mathcal X}}

\newcommand{\te}{{\theta}}

\newcommand{\Om}{{\Omega}}
\newcommand{\om}{{\omega}}
\newcommand{\ve}{{\varepsilon}}
\newcommand{\del}{{\delta}}

\newcommand{\Gam}{{\Gamma}}

\newcommand{\sig}{{\sigma}}
\newcommand{\al}{{\alpha}}
\newcommand{\be}{{\beta}}
\newcommand{\ka}{{\kappa}}
\newcommand{\la}{{\lambda}}


\newcommand{\bbC}{{\mathbb C}}
\newcommand{\bbE}{{\mathbb E}}

\newcommand{\bbN}{{\mathbb N}}
\newcommand{\bbP}{{\mathbb P}}
\newcommand{\bbR}{{\mathbb R}}

\newcommand{\bbZ}{{\mathbb Z}}


\def\fl{\mathfrak{l}}

\def\fv{\mathfrak{v}}

\def\hc{{\hat c}}

\newcommand{\Area}{\mathrm{Area}}


\def\fc{{\mathfrak{c}}}
\def\fh{{\mathfrak{h}}}
\def\fm{{\mathfrak{m}}}
\def\fK{{\mathfrak{K}}}

\def\tf{{\tilde f}}

\def\fd{{\mathfrak{d}}}
\def\fP{{\mathfrak{P}}}
\def\fu{{\mathfrak{u}}}


\newcommand{\DS}{\displaystyle}
\newcommand{\eps}{{\varepsilon}}

\begin{document}
\title[]{A Berry-Esseen theorem and Edgeworth expansions for uniformly elliptic inhomogeneous Markov chains}
 \author{Dmitry Dolgopyat and Yeor Hafouta}
\address{University of Maryland and The Ohio State University}


\thanks{DD was partially supported by the NSF and the BSF}
\dedicatory{  }
 \date{\today}
\maketitle

\begin{abstract}\noindent
We prove a Berry-Esseen theorem and Edgeworth expansions for partial sums of the form 
$\DS S_N=\sum_{n=1}^{N}f_n(X_n,X_{n+1})$, where $\{X_n\}$ is a uniformly elliptic inhomogeneous Markov chain and $\{f_n\}$ is a sequence of uniformly bounded functions. The Berry-Esseen theorem 
holds without additional assumptions, while expansions of order $1$ hold when $\{f_n\}$ is irreducible, which is an optimal condition. 
 For higher order expansions, we then focus on two situations. The first is when the essential supremum of $f_n$ is of order $O(n^{-\be})$ for some $\be\in(0,1/2)$. In this case it turns out that expansions of any order $r<\frac1{1-2\be}$ hold, and this condition is optimal. 
The second case is uniformly elliptic chains
on a compact Riemannian manifold. When $f_n$ are uniformly Lipschitz continuous we show that $S_N$ admits expansions of all orders. When $f_n$ are uniformly H\"older continuous with some exponent $\al\in(0,1)$, we show that $S_N$ admits expansions of all orders $r<\frac{1+\al}{1-\al}$.  For H\"older continues functions with $\al<1$ our results are new also for uniformly elliptic homogeneous Markov chains and a single functional $f=f_n$. In fact, we show that the condition $r<\frac{1+\al}{1-\al}$ is optimal 
 even in the  homogeneous  case.
 \end{abstract}

\section{Introduction}
Let $Y_1,Y_2,Y_3,...$ be
 a uniformly bounded sequences of independent random variables.
Set $\DS \bar S_N=\sum_{n=1}^N(Y_n-\bbE(Y_n))$, $V_N=\text{Var}(S_N)$ and $\sig_N=\sqrt{V_N}$. The classical central limit theorem (CLT) states that if $\sig_N\to\infty$ then, as $N\to\infty$, the distribution of  $\hat S_N=\bar S_N/\sig_N$ converges to the standard normal distribution. A related classical result is the Berry-Esseen theorem \cite{Ess} which is a quantification of the CLT stating that there is an absolute constant $C_0>0$ so that for every $N\geq1$ 
\begin{equation}
\label{IndBerryEsseen0}
\sup_{t\in\bbR}\left|\bbP(\hat S_N\leq t)-\Phi(t)\right|\leq C_0\sig_N^{-3}\sum_{j=1}^N\bbE\big[\big|Y_j-\bbE[Y_j]\big|^3\big]
\end{equation}
where $\Phi$ is the standard normal distribution function  (we refer to \cite{Berry} for similar result obtained simultaneously). In \cite{Esseen1956}, Esseen proved, in particular, that 
 the optimal constant $C_0$ in the RHS of \eqref{IndBerryEsseen0}
is greater  than $0.4$. Since then there were many efforts to provide close to tight upper bounds on $C_0$, and currently the smallest possible known choice for $C_0$ is $C_0=0.56$, see \cite{Shev} and references therein. In particular, when $Y_n$ are uniformly bounded then with $\|Y\|_\infty=\sup_n\|Y_n\|_\infty$ we have
\begin{equation}
\label{IndBerryEsseen}
\sup_{t\in\bbR}\left|\bbP(\hat S_N\leq t)-\Phi(t)\right|\leq C_0 \|Y\|_\infty\sig_N^{-1}.
\end{equation} 

It turns out that the rate of $\sig_N^{-1}$ in \eqref{IndBerryEsseen} is optimal, see below.
By now the optimal convergence rate in the CLT was obtained for wide classes of stationary Markov chains \cite{Nag1, Nag2, HH} and 
other weakly dependent random processes including 
chaotic dynamical systems \cite{RE,GH,HH,GO, Jir0,Jir}, 
uniformly bounded stationary sufficiently fast $\phi$-mixing sequences \cite{Rio},
$U$-statistics \cite{CJ,GS} and locally dependent random variables \cite{BR,BC,CS} 
(the last three papers use Stein's method).

The rate $\sig_N^{-1}$ is optimal for two reasons.
First,  for  the lattice random variables the distribution function
$t\mapsto \bbP(\hat S_N\leq t)$ has jumps of order $\sig_N^{-1}$.
Secondly even if the distributions of the summands have smooth densities the rate of convergence is
still $O\left(\sig_N^{-1}\right)$ if the third moment of the sum is different from Gaussian. To address the 
moment obstacle one could introduce appropriate corrections\footnote{In the case the arithmeticity
obstacle is present, that is, the distribution is lattice, 
one can consider asymptotic expansions of $\bbP(S_N=k)$ see \cite{Feller, GK, IL, DH} and references
wherein.}.
Namely, fix $r\geq1$. We say that the {\em Edgeworth expansions of order $r$} hold if there are polynomials $P_{1,N},...,P_{r,N}$ with degrees not depending on $N$ and coefficients uniformly bounded in $N$ so that 
\begin{equation}
\label{NonStEE}
\sup_{t\in\bbR}\big|\bbP(\hat S_N\leq t)-\Phi(t)-\sum_{j=1}^r \sig_N^{-j}P_{j,N}(t)\phi(t)\big|=o(\sig_N^{-r})
\end{equation}
where $\phi(t)=\frac{1}{\sqrt{2\pi}}e^{-t^2/2}$ is the standard normal density function. 
These expansions provide a more accurate approximations of the distribution function of $\hat S_N$  in comparison with the Berry-Esseen theorem. 

For independent random variables it was proven by Esseen in \cite{Ess}, 
that the expansion of order 1 holds iff
the distribution of $S_N$ is non-lattice. The conditions for higher order expansions are not yet completely understood.
Sufficient conditions for the Edgeworth expansions of an arbitrary order
 were first obtained in \cite{Cr28} under the assumption that the characteristic
function 
of the sum $\bbE(e^{itS_N})$ 
decays exponentially in $N$ uniformly for large $t$.
Later the same expansions were obtained in \cite{Ess, Feller, BR76, Br, AP}
under 
weaker decay conditions\footnote{The decay conditions used in the above papers are optimal, since one can provide examples where the decay is slightly weaker
and there are oscillatory corrections to Edgeworth expansion, see  \cite{DF, DH}.},
where the second  paper
considered non identically distributed variables and  the fourth and fifth  considered random iid vectors. 
Later Edgeworth expansions were proven for several classes
of weakly dependent random variables including 
stationary Markov chains (\cite{Nag1, Nag2, FL}), chaotic dynamical systems 
(\cite{CP, FL, FP}) and certain classes of local statistics
(\cite{BGVZ1,Hall, BGVZ2, CJV}).
In particular,  Herv\'e-P\`ene proved in
\cite{HP}  that for several classes of stationary processes the first order 
Edgeworth expansion holds if the system is irreducible, in the sense that $S_N$ can not be
represented as $S_N'+H_N$ where $S_N'$ is lattice valued and $H_N$ is bounded.
We also mention that
in \cite{Bar, RinRot}  so called weak expansions, i.e.
expansions of the form $\bbE\left(\phi\left(S_N/\sig_N\right)\right)$ where $\phi$ is a smooth
test function were studied.

Both Berry--Esseen Theorem and Edgeworth expansions require a detailed control of 
the characteristic function. 
For dependent variables, the most powerful method for analyzing the characteristic function
is the spectral approach developed by Nagaev \cite{Nag1, Nag2} (see \cite{Gou, HH} for
the detailed exposition of the spectral method). Since the spectral method relies on perturbation
theory for the spectrum of linear operators, extending it to a non stationary setting turned out
to be a non trivial task. Recently a significant progress on this problem was achieved by using
a contraction properties of the projective metric which allows to prove spectral gap type
estimates for the non-stationary compositions of linear operators (\cite{Li95, Rug, Du09, Du11}).
In particular, complex sequential Ruelle-Perron-Frobenius Theorem, proven in \cite{HK} 
provides a powerful 
tool for proving the Central Limit Theorem and its extensions in the non stationary case.
This theorem
allows to obtain both Berry--Esseen theorem (\cite{HK, HaNonl}) and Edgeworth 
expansions (\cite{Ha, DDH}) in the non stationary setting for both Markov chains and dynamical
systems.

However, the results of \cite{HK, HaNonl,Ha, DDH} are in a certain sense perturbative.
Namely, those papers study either a small perturbation of a fixed stationary system,
or they deal with random systems assuming that a system comes to a small neighborhood
of a fixed system with a positive frequency. One difficulty in studying the non-stationary
case is that there could be large cancellations of the consecutive  terms, so that
the variance of the sum, can be much smaller then the sum of the variances of the summands.
Recently \cite{DS} developed a structure theory for Markov chains which allows 
to find, for each additive functional,
a representative in the same homology class (the homologous functionals
satisfy the same limit theorems) with the smallest $L^2$ distance from either zero or from a given
lattice in~$\mathbb{R}.$  This structure theory was used in \cite{DS} to prove the
local limit theorem for non-stationary Markov chains in both diffusive and large deviations
regimes.\smallskip

In the present paper we combine the methods of \cite{HK} and \cite{DS} to obtain several
optimal results concerning the convergence rate in the CLT 
for bounded additive functional of uniformly elliptic 
non-stationary Markov chains.
Our results include
\begin{itemize}
\item Berry--Esseen bound, which holds without any additional assumptions;  
\item first order Edgeworth expansion in the irreducible case, extending theorems of Esseen and
of Herv\'e-P\`ene; 
\item higher order expansions for the chains with either decaying $L^\infty$ norm or with
bounded H\"older norm.
\end{itemize}

We emphasize that our assumptions concern only regularity of the observables.
No additional assumptions dealing with either the growth of variance or with
the decay of characteristic function away from zero are made.

The structure of the paper is the following.
Section \ref{Main} contains the precise statements of our results.
The necessary background from \cite{HK, DS} 
is given 
in Section \ref{ScBack}. 
In Section~\ref{Sec4} we discuss the Edgeworth expansions.
In general, those expansions follow from the asymptotics of the characteristic  function around $0$, together with decay of the characteristic  functions over appropriate 
domains.
In Section \ref{Sec4} we will show that  the desired expansions around the origin hold under certain  logarithmic growth conditions. We demonstrate  that under the above growth conditions the
asymptotics of the characteristic function near zero always comes from
the Edgeworth polynomials 
(regardless of whether the Edgeworth expansions hold or not). 
Those polynomials are defined canonically, and we show that under our logarithmic growth conditions the
polynomials have bounded coefficients. 
The main step in our proofs is a verification of the latter growth conditions  for the uniformly elliptic Markov chains considered in this paper. This is accomplished
in Section \ref{ScMCCum}.
Using the sequential complex Perron-Frobenius Theorem from \cite{HK}, the required estimates are obtained 
by  studying the behavior around the origin of a resulting  sequential
complex pressure functions. For independent variables the $n$-th pressure function coincides with the logarithm of the characteristic function of the $n$-th summand, and our arguments essentially reduce to the ones in \cite{Ess, Feller}.  
In comparison with \cite{HK}, where the Markov chains in random environment 
were studied, the main difficulty  is that the variance does not grow linearly fast in 
the number of summands $N$.
The Berry--Essen theorem is a direct consequence 
of the detailed asymptotics of the characteristic function near zero established in 
Section \ref{ScMCCum}. The first order expansion also follows by combining the same estimates with
the results of \cite{DS}.

In order to achieve the desired rate of decay away from $0$, an additional structure is needed. 
Thus we consider two special classes of additive functionals.
The first is when the essential supremum of the $n$-th summand converges to $0$ as $n\to\infty$. 
We show in Section \ref{ScLInf} that if 
$\DS  \|f_{n}\|_\infty=O\left(n^{-\be}\right)$ 
for some $\be\in(0,1/2)$ then the partial sums admit expansions of any order $r<\frac1{1-2\be}$, and that this condition is optimal. 
The second type of  additive functionals we consider are 
H\"older continuous functions.  If $\{X_n\}$ is a Markov chain evolving on a compact Riemannian manifold
with uniformly bounded and bounded away from $0$ densities and 
$\DS S_N=\sum_{n=1}^Nf_n(X_n,X_{n+1})$, then we show in Section \ref{ScRM}
that when $f_n$'s are uniformly bounded Lipschitz functions then $S_N$ admits Edgeworth expansions of all orders, while when $f_n$'s are uniformly bounded H\"older continuous functions with exponent $\al\in(0,1)$, then $S_N$ admits expansions of every order $r<\frac{1+\al}{1-\al}$, and that the latter condition is optimal. 
In fact, we will show that the condition $r>\frac{1+\al}{1-\al}$ is optimal even in the stationary case when $\{X_n\}$ is homogeneous  Markov chain and $f_n=f$ does not depend on $n$.

\section{Main results}\label{Main}
\subsection{A Berry-Esseen theorem and expansions of order 1}
Let $(\cX_i,\cF_i),\,i\geq1$ be a sequence of measurable spaces.
For each $i$,  let $R_i(x,dy),\,x\in\cX_i$ be a measurable family of (transition) probability measures on $\cX_{i+1}$. Let $\mu_1$ be any probability measure on $\cX_1$, and let $X_1$ be an $\cX_1$-valued random variable with distribution $\mu_1$. Let $\{X_j\}$ be the Markov 
 started from
 $X_1$ with the transition probabilities
\[
\bbP(X_{j+1}\in A|X_{j}=x)=R_{j}(x,A),
\] 
where $x\in\cX_j$ and $A\subset\cX_{j+1}$ is a measurable set.
Each $R_j$ also gives rise to a transition operator given by 
\[
R_j g(x)=\bbE[g(X_{j+1})|X_j=x]=\int g(y)R_j(x,dy)
\] 
which maps an integrable function $g$ on $\cX_{j+1}$ to an integrbale function on $\cX_j$ (the integrability is with respect to the laws of $X_{j+1}$ and $X_j$, respectively). We assume here that 
there are probability measures $\fm_j$,
$j>1$ on $\cX_j$ and families of transition probabilities $p_j(x,y)$ 
so that 
\[
R_j g(x)=\int g(y)p_j(x,y)d\fm_{j+1}(y).
\]
Moreover, there exists $\ve_0>0$ so that for any $j$ we have 
\begin{equation}
\label{DUpper}
 \sup_{x,y}p_j(x,y)\leq 1/\ve_0,
 \end{equation}
  and the transition probabilities of the second step\footnote{The assumptions that 
  we have uniform lower bound on the two step density and that the summands
    $f_n$ introduced below depend
  only on two variables are taken form \cite{DS}. In fact, the arguments of \cite{DS} also work 
  in the case we have uniform ellipticity after an arbitrary fixed number of steps and $f_n$ depend
  on finitely many variables around $x_n$ require only minor modifications (but lead to a significant
  complication of the notation). On the other hand there are some new effects in the case $f$
  depends on two variables which could not be seen in the case (considered in \cite{Dob}) 
  where $f_n$ depend on a single variable. In this paper we keep the convention from \cite{DS}
  and assume two step ellipticity and two step dependence for additive functionals.
  }
   transition operators $R_j\circ R_{j+1}$ of $X_{j+2}$ given $X_j$ are bounded from below by $\ve_0$ (this is the uniform ellipticity condition): 
\begin{equation}
\label{DLower}
\inf_{j\geq1}\inf_{x,z}\int p_j(x,y)p_{j+1}(y,z)d\fm_{j+1}(y)\geq \ve_0.
\end{equation}
Next, for a uniformly bounded sequence of measurable functions $f_n:\cX_n\times\cX_{n+1}\to\bbR$ 
we set $Y_n=f_n(X_n,X_{n+1})$ and
\begin{equation}
\label{AddFunct}
S_N=\sum_{n=1}^N(Y_n-\bbE(Y_n)).
\end{equation}
Set $V_N=\text{Var}(S_N)$ and $\sig_N=\sqrt{V_N}$. Then by \cite[Theorem 2.2]{DS} we have 
$\DS \lim_{N\to\infty}V_N=\infty$ if and only if one can not decompose $Y_n$ as
$$ Y_n=\bbE(Y_n)+a_{n+1}(X_{n+1},X_{n+2})-a_{n}(X_n, X_{n+1})+g_n(X_n, X_{n+1})$$ 
 where  $a_n$ are uniformly bounded functions and $\DS \sum_n g_n(X_n, X_{n+1})$ converges almost surely.

The CLT in the case $V_N\to\infty$ is due to \cite{Dob}, see \cite{SV} for a modern proof.
Our first result here is a version of the Berry-Esseen theorem.
Denote
\begin{equation}
\label{AddFunctN}
\hat S_N=\left(S_N-\bbE[S_N]\right)/\sig_N.
\end{equation}
\begin{theorem}\label{BE}
Suppose that $\DS \lim_{N\to\infty}V_N=\infty$. Then
there is a constant $C>0$ which depends only on $\DS \sup_{n}\|Y_n\|_{L^\infty}$ and $\ve_0$ 
so that for any $N\geq1$,
\begin{equation}\label{BE INEQ}
\sup_{t\in\bbR}\left|\bbP(\hat S_N\leq t)-\Phi(t)\right|\leq C\sig_N^{-1}
\end{equation}
where $\Phi$ is the standard normal distribution function.
\end{theorem}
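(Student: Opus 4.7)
The proof combines Esseen's smoothing inequality with a spectral analysis of the characteristic function $\phi_N(\xi):=\bbE[e^{i\xi\hat S_N}]$. Taking $T=c_0\sigma_N$ in Esseen's inequality reduces the claim to showing
$$\int_{|\xi|\leq c_0\sigma_N}\frac{|\phi_N(\xi)-e^{-\xi^2/2}|}{|\xi|}\,d\xi=O(\sigma_N^{-1}),$$
since the boundary term $O(1/T)$ already matches the target rate. The crucial point is that a single near-zero asymptotic formula will cover the entire range $|\xi|\leq c_0\sigma_N$; no separate argument away from zero is required.

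To obtain such a formula I would apply the sequential complex Perron--Frobenius theorem of \cite{HK} to the twisted transfer operators
$$L_{j,z}g(x):=\int e^{zf_j(x,y)}g(y)p_j(x,y)\,d\fm_{j+1}(y),\qquad z\in\bbC,$$
so that $\bbE[e^{zS_N}]=\int L_{1,z}L_{2,z}\cdots L_{N,z}\mathbf{1}\,d\mu_1$. The uniform ellipticity hypotheses \eqref{DUpper}--\eqref{DLower} yield uniform contraction of the Hilbert projective metric for the real operators, which by analytic perturbation persists in a fixed complex disk $|z|\leq\delta_0$ whose radius depends only on $\ve_0$ and $\sup_n\|Y_n\|_\infty$. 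This produces leading analytic eigenvalues $\lambda_j(z)$ and a sequential pressure $\Pi_N(z):=\sum_{j=1}^N\log\lambda_j(z)$ satisfying $\Pi_N'(0)=\bbE[S_N]+O(\theta^N)$ and $\Pi_N''(0)=V_N+O(\theta^N)$, together with a factorization $\bbE[e^{zS_N}]=e^{\Pi_N(z)}(1+O(\theta^N))$ for some $\theta<1$. Setting $z=i\xi/\sigma_N$ and Taylor-expanding $\Pi_N$ then gives
$$\log\phi_N(\xi)+\tfrac{\xi^2}{2}=\sum_{k\geq 3}\frac{(i\xi)^k}{k!\,\sigma_N^k}\Pi_N^{(k)}(0)+O(\theta^N),$$
valid throughout $|\xi|\leq\delta_0\sigma_N$. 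Provided the higher pressure derivatives satisfy a cumulant-type bound $|\Pi_N^{(k)}(0)|\leq C_k V_N$, one obtains $|\phi_N(\xi)-e^{-\xi^2/2}|\leq C(|\xi|^3/\sigma_N)e^{-\xi^2/4}$ in a subinterval of the form $|\xi|\leq c\sigma_N^{1/2}$ and Gaussian-type decay on the remainder of $|\xi|\leq c_0\sigma_N$; in both regimes the contribution to the Esseen integral is $O(\sigma_N^{-1})$.

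The main obstacle, and the content of Section \ref{ScMCCum}, is establishing the bound $|\Pi_N^{(k)}(0)|\leq C_k V_N$ uniformly in $N$. In the non-stationary regime $V_N$ may grow much more slowly than $N$, so one cannot naively dominate the $k$-th derivative by $O(N)$; instead I would appeal to the \cite{DS} structure theorem to replace $\{f_n\}$ by a homologous sequence of near-minimal $L^2$ norm, forcing the relevant cumulants to scale with $V_N$ rather than $N$. Combined with the \cite{HK} spectral machinery this yields \eqref{BE INEQ}. For independent summands the pressure derivatives coincide with the classical cumulants and the whole argument reduces to the bounds of \cite{Ess, Feller}.
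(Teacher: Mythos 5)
Your overall architecture matches the paper: Esseen's smoothing inequality with $T\asymp\sigma_N$, the sequential complex Perron--Frobenius theorem of \cite{HK} producing pressure functions $\Pi_j(z)$ with $\bbE[e^{zS_N}]=e^{\Pi_{1,N}(z)}(\mu_1(h_1^{(z)})+O(\theta^N))$, and a Taylor expansion of the log-characteristic function around $z=0$. You also correctly identify the crux: proving $|\Pi_{1,N}^{(k)}(0)|=O(V_N)$ when $V_N$ may grow much more slowly than $N$.

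However, your proposed resolution of that crux is not the paper's, and it has a gap. You suggest replacing $\{f_n\}$ by a homologous representative of near-minimal $L^2$ norm via the \cite{DS} structure theorem, hoping the cumulants then scale with $V_N$. But homologous replacement changes $S_N$ by a center-tight sequence, hence changes its higher cumulants; there is no immediate way to transfer a cumulant bound for the minimizer back to the original $S_N$, and even for the minimizer it is not clear why small $L^2$ norm should force $\Pi_N^{(k)}(0)=O(V_N)$ for $k\geq 3$. What the paper actually uses from \cite{DS} is different: Theorem \ref{Thm2.1} (the relation $V_N\asymp\sum u_n^2$ with the structure constants $u_n^2$) together with the moment bound of Lemma \ref{MomLem}. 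This permits a \emph{block decomposition} argument: one partitions $\{1,\dots,N\}$ into $m_N\asymp V_N$ intervals $I_1,\dots,I_{m_N}$ each with $\mathrm{Var}(S_{I_\ell})$ bounded between two fixed constants $B_1<B_2$; Lemma \ref{GenCharLemma} (Fa\`a di Bruno applied to $\log\bbE[e^{itS_{I_\ell}}]$) plus Lemma \ref{MomLem} bounds each block's $k$-th pressure derivative by a constant depending only on $k$ and $B_2$; and Lemma \ref{LemmaExAp} transfers this between the exact log-characteristic function of the block and the corresponding partial sum $\sum_{j\in I_\ell}\Pi_j$. Summing over $m_N$ blocks gives the $O(V_N)$ bound (Corollary \ref{CorDerPress}). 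You should replace the homology-minimizer step with this block argument; the rest of your outline then goes through essentially as in Propositions \ref{PropEdg} and \ref{BE prop}.
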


Next we introduce some terminology from \cite{DS}.
We say that a sequence $Z_N$ of random variables is {\em center tight} if 
there are constants $c_N$ such that $\{Z_N-c_N\}$ is tight.
Two additive functionals $f_n$ and $\tf_n$ are {\em homologous} if
$\DS \sum_{n=1}^N (f_n(X_n,X_{n+1})-\tf_n(X_n,X_{n+1}))$ is center tight.
We say that $\{f_n\}$ is {\em reducible} if it is homologous to an additive functional taking values 
in $h\mathbb{Z}$  for some $h>0$. 
If $\{f_n\}$ is not reducible, it is called {\em irreducible.}

\begin{theorem}\label{Edg1}
If $V_N$ diverges and
$\{f_n\}$ is irreducible then 
$S_N$ satisfies the Edgeworth expansion of order 1, where
$$ P_{1, N}(t)=\frac{\bbE[(S_n-\bbE[S_n])^3]}{6 V_N} (t^3-3t). $$
\end{theorem}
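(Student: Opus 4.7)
My plan is to prove Theorem \ref{Edg1} along classical Fourier-inversion lines, combining the near-zero asymptotics of the characteristic function $\varphi_N(t):=\bbE[e^{it\hat S_N}]$ established in Section~\ref{ScMCCum} with decay estimates away from zero provided by the irreducibility assumption via \cite{DS}. Let $G_N(x)=\Phi(x)+\sig_N^{-1}P_{1,N}(x)\phi(x)$ be the candidate distribution function and let $\psi_N$ denote the Fourier--Stieltjes transform of $dG_N$. By Esseen's smoothing inequality,
\[
\sup_x\bigl|\bbP(\hat S_N\le x)-G_N(x)\bigr|\le \frac{1}{\pi}\int_{-T}^{T}\left|\frac{\varphi_N(t)-\psi_N(t)}{t}\right|dt+\frac{C}{T},
\]
so the task reduces to choosing $T=T_N$ for which the right-hand side is $o(\sig_N^{-1})$.

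For the near-zero regime $|t|\le \sig_N^{\gamma}$ with a suitable small $\gamma\in(0,1)$, I invoke the cumulant expansion of $\log\varphi_N$ produced in Section~\ref{ScMCCum}. There, the sequential complex Perron--Frobenius theorem of \cite{HK} applied to twisted transfer operators yields
\[
\log\varphi_N(t)=-\tfrac{t^2}{2}+\tfrac{(it)^3}{6}\kappa_{3,N}+O\!\bigl(\sig_N^{-2}|t|^4+\sig_N^{-3}|t|^6\bigr),
\]
uniformly on the near-zero window, where $\kappa_{3,N}:=\bbE[(S_N-\bbE[S_N])^3]/\sig_N^3$ and the implicit constants depend only on $\ve_0$ and $\sup_n\|f_n\|_\infty$. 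The polynomial $P_{1,N}$ is identified by matching: exponentiating the expansion, comparing with $\psi_N$, and reading off the coefficients from $\kappa_{3,N}$. The uniform boundedness of the coefficients of $P_{1,N}$ in $N$ (required by the definition of the expansion) reduces to $|\bbE[(S_N-\bbE[S_N])^3]|/V_N=O(1)$, itself part of the output of Section~\ref{ScMCCum}. On this range, the integrand $|(\varphi_N-\psi_N)/t|$ is $o(\sig_N^{-1})$ pointwise, and integrates to $o(\sig_N^{-1})$.

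For the intermediate range $|t|\in[\sig_N^{\gamma}, T_N]$ the Taylor expansion is no longer usable, and genuine decay of $|\varphi_N(t)|$ is needed. This is where the irreducibility hypothesis enters through \cite{DS}: irreducibility rules out an exact cancellation to a lattice-valued functional modulo a bounded coboundary, and the quantitative non-arithmetic estimates of \cite{DS} (used there for the non-lattice local limit theorem) give uniform decay $\sup_{|t|\in[\delta, T]}|\varphi_N(t)|=o(1)$ at a rate good enough to ensure $\int_{\sig_N^{\gamma}\le |t|\le T_N}|\varphi_N(t)/t|\,dt=o(\sig_N^{-1})$; the matching bound for $\psi_N$ is automatic since $\psi_N(t)=e^{-t^2/2}(1+\sig_N^{-1}\cdot\text{poly}(it))$ decays super-polynomially. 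The tail term $C/T_N$ becomes $o(\sig_N^{-1})$ as soon as $T_N/\sig_N\to\infty$.

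The main obstacle is calibrating the two cutoffs $\sig_N^{\gamma}$ and $T_N$ against each other: $\gamma$ must be small enough that the cumulant Taylor remainder integrates to $o(\sig_N^{-1})$, $T_N$ must grow faster than $\sig_N$ so that the Esseen remainder is absorbed, and both must be compatible with the quantitative decay rate furnished by \cite{DS} on the intermediate range. Once this calibration is in place, the three partial bounds assemble to give~\eqref{NonStEE} with $r=1$, and the explicit form of $P_{1,N}$ as a polynomial with uniformly bounded coefficients is forced by the matching of third cumulants carried out in the near-zero step.
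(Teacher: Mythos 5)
Your overall strategy — Esseen smoothing, cumulant asymptotics near the origin, irreducibility-driven decay away from it — is the same as the paper's, which proceeds via Propositions \ref{PropEdg} and \ref{EdgeProp} together with the verification of Assumption \ref{GrowAssum} in Proposition \ref{VerifProp} and the import of Theorem 3.5 and (4.2.7) from \cite{DS} to get condition \eqref{Cond2} with $r=1$. However, there is a genuine gap where you yourself flag ``calibrating the two cutoffs.'' You place the near-zero/intermediate boundary at $|t|\le \sig_N^\gamma$ with $\gamma<1$, while the irreducibility-based non-arithmetic estimates of \cite{DS} control $|\varphi_N(t)|$ only for $|t|\ge \delta\sig_N$ with $\delta>0$ fixed (equivalently $|x|=|t|/\sig_N\ge\delta$, which is exactly the integration range in \eqref{Cond2} for $r=1$). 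Your cumulant expansion covers $|t|\le\sig_N^\gamma$, your irreducibility estimate covers $|t|\ge\delta\sig_N$, and you supply nothing for the growing window $\sig_N^\gamma\le |t|\le\delta\sig_N$. One can close this gap by invoking the Gaussian-type decay $|\Phi_N(\xi)|\le Ce^{-c\xi^2 V_N}$ for $|\xi|$ below a fixed threshold (Lemmas \ref{ThLemma} and \ref{Lm2.2}), which makes the contribution of this window superpolynomially small — but your write-up attributes the entire intermediate range to the irreducibility, which is not what that hypothesis gives.

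The cleaner route, and the one the paper takes, is to avoid the extra window altogether by pushing the cumulant-based estimate out to $|t|\le\delta\sig_N$ for a small \emph{fixed} $\delta$. This is exactly the content of Proposition \ref{VerifProp}: the sequential RPF theorem gives analyticity of $z\mapsto\Pi_j(z)$ on a complex neighborhood of $0$ that is independent of $j$ and $N$, so the derivative bounds of Assumption \ref{GrowAssum} hold on the entire rescaled interval $[-\eps_k\sig_N,\eps_k\sig_N]$, not merely near $t=0$. The Lagrange form of the Taylor remainder then controls $\Lambda_N(t)$ up to $|t|\le\delta\sig_N$, which is what Proposition \ref{PropEdg} exploits. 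Your choice of cutoff $\sig_N^\gamma$ reflects not using the full strength of what the RPF theorem supplies. Two smaller remarks: (i) in the Esseen inequality it suffices to take $T_N=B\sig_N$ with $B$ arbitrary (then $B\to\infty$ slowly); $T_N$ need not grow strictly faster than $\sig_N$. (ii) The uniform bound $|\bbE[(S_N-\bbE S_N)^3]|/V_N=O(1)$ you need for the coefficient of $P_{1,N}$ is Corollary \ref{CorDerPress} (take $k=3$ and $t=0$), and is worth citing explicitly.
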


Next, we say that $f_n$ {\em stably\footnote{
The notion of stable Edgeworth expansion 
is motivated by the notion of stable local limit theorem studied in \cite{Prok, Rozanov}. We note 
that \cite{DH} obtains conditions for the stability of Edgeworth expansions for the sums
of independent integer valued random variables (in the integer case one studies the expansions
for $\bbP(S_N=k_N)$).} 
 obeys Edgeworth expansion of order $r$} 
 if any additive functional homologous to $f_n$ satisfies Edgeworth expansions of order $r.$

\begin{corollary}
\label{CrSTEdge-Irred}
$f_n$  stably obeys Edgeworth expansion of order 1 iff it is irreducible.
\end{corollary}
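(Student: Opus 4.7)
My plan is to prove the two implications separately, using Theorem \ref{Edg1} for the sufficiency direction and the non-stationary local central limit theorem from \cite{DS} for the necessity direction.

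For sufficiency (irreducible $\Rightarrow$ stably Edgeworth), I would first observe that the homology relation on additive functionals is an equivalence relation (from \cite{DS}), so reducibility is a class property: if $\tilde f_n$ is homologous to $f_n$ and homologous to some $h\bbZ$-valued functional, then by transitivity $f_n$ is also homologous to that lattice functional. Consequently, if $f_n$ is irreducible, so is every functional in its homology class. Next I would verify that divergence of $V_N$ is preserved under homology, since homologous partial sums differ by a center-tight remainder, whose contribution to the variance is $o(V_N)$ when $V_N\to\infty$. I would also note that irreducibility itself forces $V_N\to\infty$: otherwise $S_N$ would be center tight and thus $f_n$ would be homologous to $0$, hence to any lattice-valued functional. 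With these three observations, every $\tilde f_n$ homologous to $f_n$ satisfies the hypotheses of Theorem \ref{Edg1}, which then supplies the first order Edgeworth expansion.

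For necessity (stably Edgeworth $\Rightarrow$ irreducible), I would argue by contrapositive. Assuming $f_n$ is reducible, I pick $\tilde f_n$ homologous to $f_n$ with $\tilde f_n(X_n,X_{n+1})\in h\bbZ$ for some $h>0$, and show that $\tilde f_n$ fails the order $1$ Edgeworth expansion. If $\tilde V_N$ stays bounded, then $\hat{\tilde S}_N$ has a discrete law that is bounded away from $\Phi$, and even the central limit theorem fails. In the interesting case $\tilde V_N\to\infty$, I would invoke the non-stationary local limit theorem proven in \cite{DS} for the lattice-valued $\tilde S_N$ to conclude that the point masses on the relevant shifted lattice near $t\sig_N+\bbE\tilde S_N$ have magnitude $\sim h\phi(t)/\sig_N$. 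Hence the CDF of $\hat{\tilde S}_N$ has jumps of size $\Theta(\sig_N^{-1})$ at the rescaled lattice points, whereas the Edgeworth approximation $\Phi(t)+\sig_N^{-1}P_{1,N}(t)\phi(t)$ is continuous in $t$. A continuous function cannot approximate a jump of size $\Theta(\sig_N^{-1})$ to within $o(\sig_N^{-1})$, so \eqref{NonStEE} fails at $r=1$.

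The main obstacle is less a single technical hurdle than careful bookkeeping with the structural results of \cite{DS}: one must correctly invoke the homology-invariance statements in the sufficiency direction and apply the non-stationary local limit theorem to the lattice reduction in the necessity direction. The jump-versus-continuity mechanism itself is a classical Esseen-style argument, and its content here lies entirely in having the LLT available in the non-stationary Markov setting.
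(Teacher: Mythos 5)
Your proof takes essentially the same route as the paper: preservation of irreducibility under homology together with Theorem \ref{Edg1} for the forward direction, and the lattice LLT of \cite{DS} (jumps of order $\sig_N^{-1}$ in the CDF versus the continuity of the Edgeworth approximant) for the converse. Your extra verifications—that $V_N\to\infty$ both persists under homology and is forced by irreducibility, and the separate handling of bounded $\tilde V_N$—fill in hypotheses the paper's four-line proof leaves implicit but do not change the method.
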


\begin{proof}
If $f_n$ is irreducible then any homologous additive functional $\tf_n$ is also irreducible, so 
by Theorem \ref{Edg1}, $\tf_n$ obeys Edgeworth expansion of order 1.

If $f_n$ is reducible then its homology class contains an $h\bbZ$ valued functional $\tf_n,$ 
for some $h>0$.
By the LLT of \cite[Section 5]{DS},  $\tilde S_N$ has jumps of order $1/\sqrt{V_N}$, so $\tilde S_N$ does not 
obey expansion of order~1.
\end{proof}

\subsection{High order expansions}
\subsubsection{Summands with small essential supremum}
We  obtain the following extension of the Edgeworth expansions for function $f_n$ which converge to $0$ as $n\to\infty$.
\begin{theorem}\label{ThmEssSup}
Suppose that $\DS \lim_{N\to\infty}V_N=\infty$,  and that there are $C>0$ and $\beta\in(0,1/2)$ so that for all $n\in\bbN$ we have
$\DS \|f_n\|_\infty\leq \frac{C}{n^\beta}$. Let $r\geq1$ be an integer satisfying
\begin{equation}
\label{BetaNearHalf}
r<\frac{1}{1-2\be}.
\end{equation}
Then $S_N$ admits an Edgeworth expansion of order $r.$ In particular, if $\|f_n\|_\infty=O(n^{-1/2})$ then $S_N$ admits Edgeworth expansions of all orders.
\end{theorem}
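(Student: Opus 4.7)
The plan is to combine Esseen's smoothing inequality with two estimates on the characteristic function $\phi_N(t) := \bbE[e^{it \hat S_N}]$: a near-origin expansion matching the formal Edgeworth series, and a tail decay estimate. Choosing the smoothing parameter $T=\sig_N^{r+1+\delta}$, the Edgeworth expansion of order $r$ will follow once the weighted integral $\int_{|t|\leq T} \frac{|\phi_N(t)-\mathcal{E}_N(t)|}{|t|}\,dt$ is shown to be $o(\sig_N^{-r})$.

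For the near-origin regime $|t|\leq \eps\sig_N$ I would directly invoke the general expansion result of Section \ref{Sec4}: once the sequential cumulant/pressure functions satisfy the logarithmic growth conditions of that section, the characteristic function automatically matches the Edgeworth series to the required accuracy. Section \ref{ScMCCum} verifies these conditions for uniformly elliptic chains, so the near-origin piece is already in place. The novelty of Theorem \ref{ThmEssSup} therefore sits entirely in the tail bound: for $\eps\sig_N\leq|t|\leq T$, equivalently $\eps\leq|s|\leq\sig_N^{r+\delta}$ with $s=t/\sig_N$, I must prove that $|\bbE[e^{isS_N}]|$ is small enough for its contribution to the weighted integral to be $o(\sig_N^{-r})$.

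To obtain this decay I would apply the sequential complex Perron--Frobenius theorem of \cite{HK} to the twisted transfer operators
\begin{equation*}
L_n^{(is)} g(x) := \int g(y)\,e^{is f_n(x,y)}\,p_n(x,y)\,d\fm_{n+1}(y),
\end{equation*}
in terms of which $\bbE[e^{isS_N}]$ is a product of $N$ operators acting on $\mathbf{1}$. Because the complex RPF theorem provides useful spectral information only in the small-twist regime, I would split indices at $n_0(s):=\lceil(C_0|s|)^{1/\beta}\rceil$, chosen so that $|s|\,\|f_n\|_\infty\leq \eps_0$ for every $n>n_0(s)$. On this perturbative range the sequential leading eigenvalue satisfies a quadratic estimate $|\lambda_n(is)|\leq\exp(-cs^2\sig_n^2)$, with $\sig_n^2$ the local variance read off from the second derivative of the pressure, while for $n\leq n_0(s)$ one retains only the trivial bound $\|L_n^{(is)}\|_\infty\leq 1$. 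Multiplying per-step bounds yields
\begin{equation*}
|\bbE[e^{isS_N}]|\;\leq\; C\exp\!\bigl(-cs^2(V_N-V_{n_0(s)})\bigr),
\end{equation*}
and a direct computation shows that, at the worst case $|s|\sim\sig_N^{r-1+\delta}$, this remains small enough precisely when $r<1/(1-2\beta)$.

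The main obstacle is justifying that the local variances $\sig_n^2$ appearing in the sequential pressure are comparable to $\|f_n\|_\infty^2\asymp n^{-2\beta}$, so that $V_N-V_{n_0(s)}$ retains a definite fraction of $V_N$ throughout the relevant range of $|s|$. This need not hold a priori, since cancellations between consecutive summands can depress the local variances. My workaround is the cohomological reduction of \cite{DS} (Theorem~2.2 there): homologous additive functionals share the same Edgeworth expansions, and after replacing $f_n$ by a minimum-variance representative in its homology class one recovers the expected scaling of the $\sig_n^2$. A secondary technical point is ensuring that the constants coming from the sequential complex RPF theorem of \cite{HK} are uniform across the entire perturbative range $n>n_0(s)$ and $|s|\leq\sig_N^{r+\delta}$, which should follow from the two-step uniform ellipticity \eqref{DLower} and the density upper bound \eqref{DUpper}.
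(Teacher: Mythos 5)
Your near-origin reduction and the idea of splitting indices at a threshold $n_0(s)$ where $|s|\,\|f_n\|_\infty$ becomes small are both consistent with the paper's strategy. However, the tail-bound mechanism you propose has a genuine gap, and the paper circumvents it with a different tool.

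The central problem is your per-step bound $|\lambda_n(is)|\le \exp(-c s^2\sigma_n^2)$. Such a bound would require the local variance $\sigma_n^2$ (read off from $\Pi_n''(0)$) to be nonnegative and, in order for the product to give $\exp(-cs^2(V_N-V_{n_0}))$, to be uniformly comparable to the $n$-th contribution to the total variance. Neither holds in general: gradient cancellations (terms of the form $a_{n+1}(X_{n+1},X_{n+2})-a_n(X_n,X_{n+1})$) can make individual step variances arbitrarily small, and the individual $\Pi_n''(0)$ need not be bounded below; Lemma \ref{LemmaExAp} controls only the \emph{aggregate} $\Pi_{j,n}''(0)$. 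Your proposed workaround --- replacing $f_n$ by a minimum-variance representative in its homology class --- fails for two reasons. First, the gradient correction $a_{n+1}-a_n$ that \cite{DS} subtracts involves $a_n$ that are merely uniformly bounded, so the replacement generically destroys the hypothesis $\|f_n\|_\infty\le C n^{-\beta}$, which is the whole engine of the proof. Second, the claim that homologous functionals share higher-order Edgeworth expansions is not established (Corollary \ref{CrSTEdge-Irred} addresses only order $1$, and only the existence of the expansion, not agreement of the polynomials). There is also a secondary issue: your smoothing parameter $T=\sigma_N^{r+1+\delta}$ forces control of $|\bbE[e^{isS_N}]|$ up to $|s|\sim\sigma_N^{r+\delta}$, whereas the general machinery (Corollary \ref{EdgeProp1}) and the established tail bound only reach $|s|\le B\sigma_N^{r_0-1}$ for some fixed $r_0$ slightly above $r$; and the complex RPF theorem is stated for a fixed neighborhood $U$ of the origin in the $z$ variable (depending on $\|f\|_\infty$), so applying it at $z=is$ with $|s|\to\infty$ requires a rescaling argument that you do not spell out.

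The paper sidesteps all of this by using the hexagon machinery from \cite{DS} rather than per-step spectral data. Lemma \ref{Lm2.2} gives $|\Phi_N(\xi)|\le C e^{-cD_N(\xi)}$ with $D_N(\xi)=\sum_n d_n^2(\xi)$, and Lemma \ref{ThLemma} gives $d_n^2(\xi)\ge \xi^2 u_n^2/2$ whenever $\|f_n\|_\infty|\xi|\le\delta$; the structure constants $u_n^2$ are hexagon balances that already quotient out gradient terms, and Theorem \ref{Thm2.1} says $\sum_n u_n^2\asymp V_N$. Thus, after discarding the initial block $n\le N_0$ where the constraint $\|f_n\|_\infty|\xi|\le\delta$ fails, one gets $|\Phi_N(\xi)|\le \exp(-c\xi^2 V(S_N-S_{N_0}))$ directly; the decay hypothesis $\|f_n\|_\infty=O(n^{-\beta})$ is then used twice --- to bound $N_0=O(N^\kappa)$ and $V_{N_0}=O(V_N^\kappa)$ with $\kappa<1$ --- and the condition $r<1/(1-2\beta)$ is exactly what makes $\kappa<1$. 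No cohomological reduction and no per-step eigenvalue bound are needed.
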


The following result shows that the conditions of Theorem \ref{ThmEssSup} are optimal.
\begin{theorem}\label{EgThm1}
For every $\beta\in(0,\frac12)$ there exists a sequence of centered independent random variables $X_n$ so that $C_1n^{-\beta}\leq \|X_n\|_{L^\infty}\leq C_2n^{-\be}$ for some $C_1,C_2>0$ and all $n$ large enough, $V(S_N)$ is of order $N^{1-2\beta}$ but $\DS S_N=\sum_{n=1}^{N}X_n$ fails to satisfy Edgeworth expansions of any order $s$ such that $s>\frac1{1-2\beta}$.
\end{theorem}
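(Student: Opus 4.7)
The plan is to give an explicit construction: a sequence of independent centered Bernoulli–type variables whose partial sums are lattice valued with spacing of order $N^{-\beta}$, which is too coarse to allow Edgeworth expansions of order exceeding $1/(1-2\beta)$. Let $k_n=\lceil\beta\log_2 n\rceil$ and $a_n=2^{-k_n}$, so that $n^{-\beta}/2<a_n\leq n^{-\beta}$, and let $\{X_n\}$ be independent with $\bbP(X_n=a_n)=\bbP(X_n=-a_n)=1/2$. Then each $X_n$ is centered, $\|X_n\|_{L^\infty}=a_n\asymp n^{-\beta}$, and
$$V_N=\sum_{n\leq N}a_n^2\asymp\sum_{n\leq N}n^{-2\beta}\asymp N^{1-2\beta},$$
so $\sig_N\asymp N^{(1-2\beta)/2}$. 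Since every $a_n$ is a negative integer power of $2$, setting $K_N=\lceil\beta\log_2 N\rceil$ and $h_N:=2^{-K_N}\asymp N^{-\beta}$, every $X_n$ with $n\leq N$ lies in the lattice $h_N\bbZ$, which is the finest common lattice (some $a_n$ equals $h_N$ for $n\asymp N$); consequently $S_N\in h_N\bbZ$ and $h_N/\sig_N\asymp N^{-1/2}$.

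The next step is to establish a local central limit theorem showing that some lattice atom of $S_N$ closest to $0$, call it $x_N^*$, carries probability of order $h_N/\sig_N$. I would decompose $S_N=\sum_{k\leq K_N}T_k$ into independent ``level block'' sums $T_k=\sum_{n\leq N,\,k_n=k}X_n$; each $T_k$ is a scaled simple random walk on $2^{-k}\bbZ$ with $\asymp 2^{k/\beta}$ terms and variance $\asymp 2^{k(1-2\beta)/\beta}$, so the finest block $T_{K_N}$ already captures a positive fraction of $V_N$. Applying the classical local CLT to $T_{K_N}$ and then convolving with the law of $T_1+\cdots+T_{K_N-1}$ (which contributes an $O(1)$ factor, since its standard deviation is of the same order as $\sig(T_{K_N})$) yields
$$\bbP(S_N=x_N^*)=\frac{h_N}{\sig_N\sqrt{2\pi}}(1+o(1))\asymp N^{-1/2}.$$
Equivalently, one may use Fourier inversion $\bbP(S_N=x_N^*)=\tfrac{h_N}{2\pi}\int_{-\pi/h_N}^{\pi/h_N}e^{-ix_N^*t}\prod_n\cos(a_nt)\,dt$, extracting the Gaussian main term from $|t|\lesssim 1/\sig_N$ and controlling the tail via a dyadic decomposition of $[-\pi/h_N,\pi/h_N]$ across the various sublattice scales $2^{-k}\bbZ$.

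To conclude, suppose $S_N$ admitted an Edgeworth expansion of some order $s>1/(1-2\beta)$. Then the distribution function $F_N$ of $\hat S_N=S_N/\sig_N$ agrees with a continuous function $G_N$ (the Edgeworth approximation) up to a uniform error $o(\sig_N^{-s})$. However, $F_N$ has a jump of size at least $cN^{-1/2}$ at $x_N^*/\sig_N$, while $G_N$ is continuous, hence $\sup_t|F_N(t)-G_N(t)|\geq cN^{-1/2}/2$; since $s(1-2\beta)/2>1/2$ forces $\sig_N^{-s}\asymp N^{-s(1-2\beta)/2}=o(N^{-1/2})$, this contradicts the assumed Edgeworth bound. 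The main technical obstacle is the local CLT of the previous step, since the summands live on sublattices $2^{-k_n}\bbZ$ of varying fineness and the finest common lattice $h_N\bbZ$ appears only at the level of the sum $S_N$; no single-scale classical estimate directly applies, and one has to either combine a block-wise local CLT with a convolution argument, or else establish multi-scale Fourier bounds that handle all levels $k=1,\dots,K_N$ simultaneously.
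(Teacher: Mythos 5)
Your construction (take $a_n=2^{-\lceil\beta\log_2 n\rceil}\asymp n^{-\beta}$ and $X_n=a_n Y_n$ with $Y_n$ i.i.d.\ $\pm1$) is essentially equivalent to the paper's and is perfectly sound: the $a_n$ are dyadic rationals whose denominators are monotone, so $S_N$ lives on a lattice of span $\asymp N^{-\beta}$, and $V_N\asymp N^{1-2\beta}$. The problem is with the second half of your argument, which you yourself flag as the ``main technical obstacle'': you want to run a local limit theorem across all dyadic scales to show that the atom at the lattice point nearest $0$ carries mass $\asymp h_N/\sig_N$. That LLT is plausible but non-trivial (the summands live on sublattices of varying fineness, and the simple random walk block $T_k$ is itself supported on a coset of $2\cdot 2^{-k}\bbZ$, so even the block-plus-convolution outline needs care); you leave it unproved, so as written the argument has a genuine gap.

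More importantly, the LLT is \emph{not needed}: instead of exhibiting an atom with mass $\gtrsim N^{-1/2}$, you can exhibit an interval with mass exactly $0$, which is trivial. This is what the paper does. Since $S_N$ takes values in a lattice of span $\geq h_N\asymp N^{-\beta}$, for any lattice point $\al_N=O(\sig_N)$ the interval $(\al_N,\al_N+h_N/2]$ carries zero probability. In normalized coordinates this is an interval of length $\eps_N=h_N/(2\sig_N)\asymp N^{-1/2}$ located in the bulk. But if an Edgeworth expansion of order $s$ held, the approximating function $\cE_{s,N}$ would satisfy $\cE_{s,N}'(t)=\phi(t)\bigl(1+O(\sig_N^{-1})\bigr)\geq c>0$ uniformly on the bulk (here one only uses that the $P_{j,N}$ have bounded degree and bounded coefficients), so
$$0=\bbP\Bigl(\hat S_N\in(\al_N/\sig_N,\al_N/\sig_N+\eps_N]\Bigr)\geq c\,\eps_N-2\,o(\sig_N^{-s}),$$
forcing $\eps_N=o(\sig_N^{-s})$. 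Since $\eps_N\asymp N^{-1/2}$ while $\sig_N^{-s}\asymp N^{-s(1-2\beta)/2}$, this fails precisely when $s>\frac{1}{1-2\beta}$, which is the desired contradiction. The paper's construction uses $a_n=p_n/q_n$ with $q_n\asymp n^{c}$ for a tunable $c\in(\beta,s_\beta)$ (the extra freedom in $c$ is there to make $p_n=[n^{-\beta}q_n]\geq 1$), but the logic is the same. In short: replace your atom lower bound (requiring an LLT) with the gap upper bound (trivially $0$) and the proof closes without the multi-scale analysis.
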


Taking $\beta\in(0,1/4)$ we have $\frac1{1-2\beta}<2$, and we get from Theorem \ref{EgThm1} that 
$S_N$ might not admit Edgeworth expansions of order larger than $1$ if $\|f_n\|_\infty\asymp n^{-\be}$. 

\subsubsection{Markov chains on compact Riemannian manifolds}
Let us assume that  $\{X_n\}$ is a Markov chain on a compact Riemannian manifold $M$ with transition densities $p_n(x,y)$ bounded and bounded away from $0$, uniformly in $n$.
 Let $\al\in(0,1]$ and let  $f_n: M\times M\to \bbR$ be observables satisfying $\|f_n\|_{\al}:=\max(\sup|f_n|,v_\al(f_n))\leq 1$, where $v_\al(f_n)$ is the H\"older constant of $f_n$ corresponding to the exponent $\al$. Consider the sum
$$ S_N=\sum_{n=1}^Nf_n(X_n,X_{n+1}).$$

\begin{theorem}\label{EdgThmHold}
Suppose that  $V_N=V(S_N)\to \infty.$ 
\vskip0.2cm
(i)If $\al=1$ then $S_N$ satisfies the Edgeworth expansion of all orders. 
\vskip0.2cm
(ii) If $\al<1$ then $S_N$ satisfies the Edgeworth expansion of any order $r<\frac{1+\alpha}{1-\al}$.
\end{theorem}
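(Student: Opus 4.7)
The plan is to deduce Theorem~\ref{EdgThmHold} from the general Edgeworth framework developed in Section~\ref{Sec4}. That framework yields an expansion of order $r$ once (a) the near-zero asymptotics of the logarithm of the characteristic function have the form required by the Edgeworth polynomials (already established in Section~\ref{ScMCCum} under $V_N \to \infty$ alone), and (b) the characteristic function $t\mapsto \bbE[e^{it\hat S_N}]$ decays fast enough on intervals $\delta \le |t|\le T_N$, with $T_N$ growing polynomially in $\sig_N^r$. By Esseen's smoothing inequality this second ingredient reduces to showing $\int_{\delta \le |t|\le T_N} \bigl|\bbE[e^{it\hat S_N}]\bigr|/|t|\, dt = o(\sig_N^{-r})$, so the substance of the proof is the required decay estimate.

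I would analyze the characteristic function through the sequential twisted transfer operators
\[
L_{n,t}g(x) = \int e^{itf_n(x,y)} g(y) p_n(x,y) \, d\fm_{n+1}(y),
\]
so that $\bbE[e^{itS_N}] = \int L_{1,t}\cdots L_{N,t}\mathbf{1}\, d\mu_1$. The core technical step is a Lasota--Yorke inequality for the two-step operators $L_{n+1,t}L_{n,t}$ in the H\"older seminorm $v_\al$: two-step uniform ellipticity supplies an $L^\infty$ contraction of the untwisted product on mean-zero functions, while the convolution with bounded densities smooths in the $x$-variable enough to produce, uniformly in $n$,
\[
v_\al(L_{n+1,t}L_{n,t}g) \le \rho\, v_\al(g) + C(1+|t|^\al)\|g\|_\infty
\]
for some $\rho<1$, with $|t|^\al$ replaced by $|t|$ when $\al=1$. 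Feeding this into the sequential complex Perron--Frobenius Theorem of \cite{HK} yields a multiplicative decomposition of $L_{1,t}\cdots L_{N,t}$ whose leading factors $\la_{n,t}$ lie strictly inside the unit disk for every $t\neq 0$, with contraction rate degrading polynomially in $|t|$. Tracking constants delivers a bound of the form $\bigl|\bbE[e^{it\hat S_N}]\bigr|\le C\exp\bigl(-cV_N/(1+|t|^\al)^q\bigr)$ for some $q>0$.

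For part~(i), where the relevant exponent is $|t|$ rather than $|t|^\al$, the decay is super-polynomial in $1/|t|$ on any range $|t|\le e^{c'N}$, and since $V_N\le CN$, the integral over $[\delta,T_N]$ is negligible for every $r$, giving expansions of all orders. For part~(ii), the weaker $|t|^\al$ dependence only delivers nontrivial decay up to $|t|\lesssim V_N^{\al/q}$; matching this window against $T_N\asymp \sig_N^r$ produces exactly the threshold $r<\tfrac{1+\al}{1-\al}$. A convenient double-check uses mollification: replace each $f_n$ by a Lipschitz approximation $\tf_n$ at scale $\eta$ with $\|\tf_n\|_{\mathrm{Lip}}\lesssim \eta^{\al-1}$ and $\|f_n-\tf_n\|_\infty\lesssim \eta^\al$, apply part~(i) to the smoothed process, and balance the trivial error $|t|N\eta^\al$ against the Lipschitz decay; the optimal choice of $\eta$ reproduces the same exponent, which matches the sharpness already announced in the introduction.

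The main obstacle is the Lasota--Yorke inequality itself: since the densities $p_n(x,y)$ are only assumed bounded and bounded away from zero, any $x$-regularity of $L_{n,t}g$ must come either from the phase $e^{itf_n(x,y)}$ or from the smoothing effect of the two-step convolution, and the resulting bounds must be uniform in $n$ with only polynomial dependence on $|t|$. Verifying that the sequential complex Perron--Frobenius theorem of \cite{HK} applies with these quantitative constants, and that $|\la_{n,t}|$ stays uniformly below $1$ on compact subsets of $t\neq 0$, is where the two-step uniform ellipticity built into the setup of Section~\ref{Main} is used in an essential way.
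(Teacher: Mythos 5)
Your proposed route—Lasota--Yorke inequalities in $\|\cdot\|_\al$ for the twisted operators followed by a large-$|t|$ spectral gap—is not the paper's route, and it has a genuine gap that the paper's setup makes unavoidable. The obstruction is at the very first step: the densities $p_n(x,y)$ are assumed \emph{only} bounded and bounded away from zero, with no regularity in $x$. Writing $R_{it}^{(j)}g(x)=\int g(y)e^{itf_j(x,y)}p_j(x,y)\,d\fm_{j+1}(y)$, the $x$-dependence of $R_{it}^{(j)}g$ (and of any multi-step composition) passes through $p_j(\cdot,y)$, so $R_{it}^{(j)}g$ need not be H\"older in $x$ even when $g$ and $f_j$ are. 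The inequality $v_\al(L_{n+1,t}L_{n,t}g)\leq\rho\,v_\al(g)+C(1+|t|^\al)\|g\|_\infty$ therefore fails in this generality; there is no mechanism by which integration against a merely bounded kernel regularizes in the outgoing variable. This is precisely why the paper's Theorem~\ref{RPF} lives on the sup-norm space $B_j$ and is only asserted for $z$ in a small complex neighborhood of $0$, and why the large-$|t|$ decay is obtained by a completely different device: the hexagon quantities $u_n,d_n$ from \cite{DS}, the pointwise oscillation Lemma~\ref{LmLipOsc} giving $d_n^2(\xi)\geq C|\xi|^{1-1/\al}u_n^2$, and the characteristic-function bound $|\Phi_N(\xi)|\leq Ce^{-cD_N(\xi)}$ of Lemma~\ref{Lm2.2}. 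Uniform ellipticity enters only to compare hexagon expectations with integrals against Riemannian volume (\eqref{VUNHolder}), and no smoothness of $p_n$ is used.

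Even granting your Lasota--Yorke inequality, the argument as written does not reach the stated threshold. You claim a bound $|\bbE[e^{it\hat S_N}]|\leq C\exp(-cV_N/(1+|t|^\al)^q)$ with an unspecified $q>0$, and then assert that matching against $|t|\lesssim\sig_N^{r-1}$ ``produces exactly'' $r<\frac{1+\al}{1-\al}$; but a short computation shows you would need $q=(1-\al)/\al^2$, and nothing in the sketch pins this down. The $\al=1$ case is also not covered by the claimed bound form: $\exp(-cV_N/(1+|t|)^q)$ degrades in $|t|$ and, with $|t|\sim\sig_N^{r-1}$, only yields orders $r$ with $(r-1)q<2$, not all orders. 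The paper instead gets a $t$-uniform bound $|\Phi_N(\xi)|\leq Ce^{-cV_N}$ when $\al=1$ (since $|\xi|^{1-1/\al}\equiv 1$), which is what makes expansions of all orders possible. Finally, the mollification ``double-check'' does not repair the gap: it invokes part (i) for the smoothed process, which rests on the same Lasota--Yorke inequality, and it ignores that mollifying $f_n$ changes $V_N$ and the Edgeworth polynomials themselves, so the comparison is not just a matter of balancing $|t|N\eta^\al$ against a decay rate.
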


For smooth functions, expansions of all orders were obtained in \cite{FL} for stationary Markov chains and functions $f_n=f$ which do not depend on $n$. 
Here we have to overcome the difficulty that the variance of $f_n(X_n,X_{n+1})$ might be small, and hence the proof differs from the one in \cite{FL} even for smooth functions,  so it is also new in the stationary case.
The proof of Theorem \ref{EdgThmHold} follows the approach of \cite{DoAn}.
We note that  similar estimates are used in \cite{DoAn, DoPrev} 
to prove polynomial bounds 
for the decay of correlations for hyperbolic suspension flows with H\"older roof functions.
However, the bound of \cite{DoAn, DoPrev} are not explicit whereas here we get an explicit 
(and optimal,  see below) control on the possible location of resonances.

 We see that as $\al\to 1$, the largest order of the expansions ensured by 
 Theorem~\ref{EdgThmHold}(ii) diverges to $\infty$.
The following theorem shows that the conditions of Theorem~\ref{EdgThmHold}(ii) are optimal.
\begin{theorem}\label{EgThm2}
 Let $\{x_n\}$ be iid random variables uniformly distributed on $[-1,1]$.
For every $0<\al<1$ there exists an increasing odd function $f:[-1,1]\to[-1,1]$  which is H\"older continuous with exponent $\al$ and is onto $[-1,1]$,  so that 
$\DS S_n=\sum_{j=1}^nf(x_j)$ does not admit Edgeworth expansion of any order 
$r>\frac{1+\al}{1-\al}$.
\end{theorem}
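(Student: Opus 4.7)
The plan is to produce an explicit Cantor‑staircase‑type $f$ whose range concentrates on successively finer dyadic lattices, and to use this to force atoms in the distribution of $S_N$ of size of order $\sig_N^{-(1+\al)/(1-\al)}$, which is too large to be absorbed by any Edgeworth expansion of order $r>\tfrac{1+\al}{1-\al}$. Concretely, set $\beta:=2^{-1/\al}\in(0,\tfrac12)$ and let $F_\beta:[0,1]\to[0,1]$ be the Cantor staircase of the symmetric Cantor set $C_\beta$ obtained by iteratively removing middle open gaps of relative length $1-2\beta$ (the CDF of the natural Bernoulli $(\tfrac12,\tfrac12)$ measure on $C_\beta$). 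Self‑similarity gives that $F_\beta$ is continuous, non‑decreasing, onto $[0,1]$, and H\"older with exponent $\al=\log 2/\log(1/\beta)$; moreover on every gap of generation $m$, $F_\beta$ is constant with value in $(2\bbZ+1)/2^m\subset 2^{-m}\bbZ$. Setting $f(x):=\mathrm{sgn}(x)\,F_\beta(|x|)$ gives an odd, non‑decreasing, onto, $\al$-H\"older function $f:[-1,1]\to[-1,1]$ (the cross‑origin H\"older bound uses $a^\al+b^\al\le 2^{1-\al}(a+b)^\al$).

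Next I would exploit the lattice concentration of $f$ at dyadic scales. Let $E_n\subset[-1,1]$ be the union of all signed gaps of level $\le n$, so $\mathrm{Leb}(E_n)=2(1-(2\beta)^n)$ and $f(E_n)\subset 2^{-n}\bbZ$. Given $N$, choose $n=n_N$ maximal with $N(2\beta)^n\ge 1$, i.e.\ $n_N\asymp\tfrac{\al}{1-\al}\log_2 N$, and set $h_N:=2^{-n_N}\asymp N^{-\al/(1-\al)}$. The event $\mathcal{E}_N:=\{x_j\in E_{n_N}\ \forall j\le N\}$ has $\bbP(\mathcal{E}_N)=(1-(2\beta)^{n_N})^N\ge c_0>0$, and on $\mathcal{E}_N$ one has $S_N\in h_N\bbZ$. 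Conditionally on $\mathcal{E}_N$ the summands $f(x_j)$ are i.i.d., bounded by $1$, lattice‑valued on $h_N\bbZ$, generate the full lattice $h_N\bbZ$ as a group (e.g.\ atoms at levels $n_N$ and $n_N-1$ differ by $h_N$), and have conditional variance bounded below by a positive constant independent of $N$ (it converges to $\mathrm{Var}\,f(x_1)>0$ as $n_N\to\infty$).

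The Berry--Esseen form of the CLT, applied uniformly to this conditional i.i.d.\ sequence, concentrates $S_N\mid\mathcal{E}_N$ in an $O(\sqrt N)$ window around its mean. This window contains $\sim\sqrt N/h_N$ lattice atoms, so a simple pigeonhole argument yields an atom of conditional mass at least $c_1 h_N/\sqrt N$. Multiplying by $\bbP(\mathcal{E}_N)\ge c_0$ gives an atom of $S_N$ of mass at least $c h_N/\sqrt N$, and hence a jump of size at least $c h_N/\sig_N$ in the CDF of $\hat S_N=(S_N-\bbE S_N)/\sig_N$ at the corresponding renormalised lattice point.

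The contradiction with high‑order Edgeworth is then immediate: if the expansion of order $r$ held, the CDF of $\hat S_N$ would equal a $C^\infty$ function plus a uniformly $o(\sig_N^{-r})$ remainder, so every jump of the CDF would be $o(\sig_N^{-r})$; combined with the lower bound this gives
\[
\frac{h_N}{\sig_N}=o(\sig_N^{-r})\ \Longleftrightarrow\ N^{-\al/(1-\al)-1/2}=o(N^{-r/2})\ \Longleftrightarrow\ r<\frac{1+\al}{1-\al},
\]
ruling out expansions of any order $r>\tfrac{1+\al}{1-\al}$. The technical core of the argument is the uniform‑in‑$N$ atom lower bound $\bbP(S_N=kh_N\mid\mathcal{E}_N)\gtrsim h_N/\sqrt N$, since both the lattice spacing $h_N$ and the number of atoms change with $N$; it reduces to a Berry--Esseen CLT with constants uniform in $N$, which holds because the conditional summand distribution is bounded, has variance bounded uniformly below, and has support generating $h_N\bbZ$.
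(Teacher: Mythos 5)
Your proof is correct, and it follows the same overall strategy as the paper (force $S_N$ into a lattice at a scale $\asymp N^{-\al/(1-\al)}$ and use a pigeonhole to produce an atom of the CDF of $\hat S_N$ that is too large to be absorbed by an $o(\sig_N^{-r})$ error term, since the Edgeworth approximant is continuous), but it differs in two concrete respects. First, the choice of Cantor function: the paper works with Cantor functions built on two scales $p$ and $p+q$ with $q\mid(p-1)$, whose plateau values sit in $p^{-m}\bbZ$; this yields the result only for $\al$ of the special form $\ln p/\ln(p+q)$, and a separate density-plus-monotonicity argument is then needed to extend to all $\al\in(0,1)$. You instead take the standard symmetric Cantor staircase for the ratio $\beta=2^{-1/\al}$, whose plateau values are dyadic, which gives the H\"older exponent $\al$ exactly for \emph{every} $\al\in(0,1)$ and therefore avoids the density step entirely. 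Second, the way the lattice structure is enforced: the paper uses a union bound to show $\bbP(k_N S_N\notin\bbZ)=o(1)$, i.e.\ $S_N$ is lattice-valued with probability $1-o(1)$, whereas you condition on the event $\cE_N$ that every $x_j$ lies in a gap of level $\leq n_N$, choosing $n_N$ so that $\bbP(\cE_N)$ is merely bounded away from zero. Both routes then conclude by the same pigeonhole: concentration of $S_N$ in an $O(\sqrt N)$ window (Chebyshev suffices, Berry--Esseen is more than enough), divided by the number $\asymp\sqrt N/h_N$ of admissible lattice points, yields an atom of size $\gtrsim h_N/\sqrt N\asymp N^{-\al/(1-\al)-1/2}$, contradicting an order-$r$ expansion precisely when $r\geq\frac{1+\al}{1-\al}$. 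Two small points worth making explicit when writing this up: the conditional mean of $f(x_j)$ given $x_j\in E_{n_N}$ is zero by oddness, so the atom produced by the pigeonhole lies at a bounded location in $\hat S_N$-scale (though in fact this is not strictly needed, as the Edgeworth approximant is continuous everywhere); and the lower bound on the conditional variance follows since $\bbP(x\in E_{n_N})\to 1$ so the conditional law of $f(x_1)$ converges in total variation to its unconditional law, whose variance is positive. Overall this is a genuinely cleaner variant of the paper's argument.
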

Theorem \ref{EgThm2} show that  the conditions of Theorem~\ref{EdgThmHold}(ii) are optimal even in the stationary case. The idea in the proof of Theorem \ref{EgThm2} is to first approximate $\al$ by numbers of the form $\al_{q,p}=\ln(p)/\ln (p+q)$, for some $p,q\geq2$ so that $q|(p-1)$. Then,  the restriction of the function $f$ to $[0,1]$ will be the, so called,
Cantor function (see \cite{Gil}) corresponding to a certain Cantor set with Hausdorff dimension 
$\al_{q,p}$.
\subsection{The canonical form of the Edgeworth polynomials}

We note that in the non-stationary setting, \eqref{NonStEE} does not define 
the Edgeworth polynomials uniquely
since we could always modify the coefficients by terms of order $o(\sig_N^{-r})$.
However, it turns out that one could make a canonical choice which 
a simple computation of its coefficient in a quite general setting including additive functionals 
of uniformly elliptic Markov chains considered here.

Given a nonconstant random variable $S$ with finite moments of all orders, let $a_j(S)$ denote the normalized cumulant
$$ a_j(S)=\frac{1}{V(S)i^j} \frac{d^j}{dt^j}\Big|_{t=0} \ln \left[\bbE\left(e^{it (S-\bbE(S))}\right)\right]. $$
\begin{theorem}
There exist polynomials $\fP_j(z; a_3, a_4, \dots , a_{3j})$ such that for each  
integer $r\geq1$ there is a positive constant
$\delta_r=\delta_r(\eps_0, K)$, $\DS K=\sup_n\|f_n(X_n,X_{n+1})\|_{L^\infty}$, such that if $S_N$ and $\hat S_N$ are given 
by \eqref{AddFunct} and \eqref{AddFunctN}, respectively,
then denoting 
\begin{equation}
\label{EdgeDef}
P_{j, N}(z)=\fP_j(z, a_3(S_N), \dots, a_{3j}(S_N)),
\end{equation}
$\DS \cE_{r,N}(z)=\Phi(z)+\phi(z) \sum_{j=1}^r{\sig_N^{-j}}P_{j, N}(z)$ 
and letting $\widehat\cE_{r,N}$ denote the Fourier transform of $\cE_{r,N}(z)$ we have 
\begin{equation}
\label{CharFun0}
\int_{-\delta_r\sig_N }^{\delta_r\sig_N} 
\left| \frac{\bbE\left(e^{it \hat S_N}\right)-\widehat{\cE_{r,N}}(t)}{t}\right| dt=
O\left(\sig_N^{-(1+r)}\right). 
\end{equation}
\end{theorem}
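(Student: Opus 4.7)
The plan is to derive \eqref{CharFun0} from a Taylor expansion of the cumulant generating function $\La_N(t)=\log\bbE(e^{it(S_N-\bbE(S_N))})$ around $t=0$, combined with a Gaussian-type decay estimate on the characteristic function of $\hat S_N$. Since the $j$-th cumulant of $\hat S_N$ equals $\sig_N^{2-j}a_j(S_N)$ (with $a_2(S_N)=1$), writing $\Psi_N(t)=\La_N(t/\sig_N)+t^2/2$ we have a formal expansion
\[
\Psi_N(t)=\sum_{j\ge 3}\frac{(it)^j}{j!}\sig_N^{-(j-2)}a_j(S_N),
\]
and expanding $e^{\Psi_N}$ as a power series and collecting terms by powers of $\sig_N^{-1}$ will produce
\[
e^{\La_N(t/\sig_N)}=e^{-t^2/2}\Bigl[1+\sum_{k=1}^r \sig_N^{-k}\mathfrak{Q}_k(it;a_3(S_N),\dots,a_{k+2}(S_N))\Bigr]+\mathcal R_{r,N}(t),
\]
where $\mathfrak{Q}_k$ is a universal polynomial in $it$ of degree at most $3k$ obtained by standard combinatorial rearrangement. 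The Edgeworth polynomial $\fP_j$ is then defined as the unique polynomial in $z$ such that $\phi(z)\fP_j(z;a_3,\dots,a_{3j})$ has Fourier--Stieltjes transform $e^{-t^2/2}\mathfrak{Q}_j(it;\dots)$. Since $(-1)^n\phi^{(n)}(z)=H_n(z)\phi(z)$ has Fourier transform $(it)^ne^{-t^2/2}$, this realizes $\fP_j$ as a linear combination of Hermite polynomials whose coefficients are universal polynomials in $a_3,\dots,a_{k+2}$, and a fortiori in $a_3,\dots,a_{3j}$.

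To control $\mathcal R_{r,N}(t)$ on $|t|\le\delta_r\sig_N$, one would split the range into two parts. For $|t|\le\delta\sig_N^{1/3}$ with $\delta$ small, $|\Psi_N(t)|=O(|t|^3/\sig_N)$ is uniformly small, so term-by-term expansion of $e^{\Psi_N}$ is justified and the Taylor remainder gives $|\mathcal R_{r,N}(t)|\le C_r\sig_N^{-(r+1)}|t|^{3(r+1)}e^{-t^2/4}$. For $\delta\sig_N^{1/3}\le |t|\le\delta_r\sig_N$, one instead uses a Gaussian-type decay bound $|\bbE(e^{it\hat S_N})|\le e^{-ct^2}$ together with the explicit Gaussian decay of $\widehat{\cE_{r,N}}(t)$; the two together make $|\mathcal R_{r,N}(t)|\le e^{-ct^2}$, super-polynomially small in $\sig_N$. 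Dividing by $|t|$ and integrating (the singularity at $0$ is harmless because $\bbE(e^{it\hat S_N})$ and $\widehat{\cE_{r,N}}(t)$ both equal $1$ at $t=0$, so their difference is $O(t^2)$) then yields \eqref{CharFun0}.

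The principal obstacle, which occupies Section~\ref{ScMCCum}, is to verify the analytic inputs on which both the formal expansion and the Gaussian-type decay rely: (a) a uniform radius $\delta_r>0$ (independent of $N$) on which $\La_N(\cdot/\sig_N)$ is holomorphic with controlled derivatives at~$0$, and (b) uniform estimates $|a_j(S_N)|\le C_j$ for all $j\le 3r+2$. Both will be obtained via the sequential complex Perron--Frobenius theorem of \cite{HK}, which represents $\bbE(e^{itS_N})$ up to multiplicative boundary corrections as a product of leading eigenvalues of complex transfer operators depending analytically on $t$ in a fixed neighborhood of $0$; this produces a sequential complex pressure function $\Pi_N(t)$ whose $j$-th derivative at $0$ equals $\kappa_j(S_N)$. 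The main difficulty, absent from the stationary or random-environment settings of \cite{HK}, is that $V_N$ may grow sublinearly in $N$, so uniform boundedness of the normalized cumulants $a_j(S_N)=\kappa_j(S_N)/V_N$ is not automatic; proving it will require combining the Perron--Frobenius machinery with the homological reduction of \cite{DS} that identifies the representative of smallest $L^2$-norm in each homology class.
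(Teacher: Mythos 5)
Your overall strategy coincides with the paper's: define $\fP_j$ by formally expanding $\exp$ of the cumulant-generating series (Definition \ref{EdgPdef}), prove the integral estimate \eqref{CharFun0} by Taylor expansion of $\Lambda_n(t)$ around $0$ together with Gaussian-type control of the characteristic function (Proposition \ref{PropEdg}, whose Lemmas \ref{Le1}--\ref{Le2} replace your range-splitting at $\sig_N^{1/3}$ with a single bound of the form $e^{\al t^2}$, $\al<1/2$, on all of $[-\del_r\sig_N,\del_r\sig_N]$), and reduce everything to the growth condition on the derivatives of $\Lambda_N$ near $0$ (Assumption \ref{GrowAssum}), verified via the sequential pressure functions produced by the sequential complex RPF theorem.

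There is, however, a genuine misidentification in your sketch of the decisive technical step. You propose to obtain the cumulant bounds $|a_j(S_N)|\le C_j$ despite slow variance growth by ``combining the Perron--Frobenius machinery with the homological reduction of \cite{DS} that identifies the representative of smallest $L^2$-norm in each homology class.'' That is not what is needed, and it would not work in any straightforward way here: the cumulants $\Gamma_j(S_N)$ are not invariants of the homology class (only the limiting behaviour is), so replacing $f_n$ by a homologous representative changes precisely the quantities you are trying to estimate. The tool actually used (Corollary \ref{CorDerPress}) is the structure-constants/hexagon machinery of \cite{DS}: Theorem \ref{Thm2.1} gives the two-sided bound $V(S_{j,n})\asymp\sum_m u_m^2+O(1)$, which permits one to partition $\{1,\dots,N\}$ into consecutive blocks $I_1,\dots,I_{m_N}$ with $B_1\le\mathrm{Var}(S_{I_l})\le B_2$ for fixed $B_1,B_2$. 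On each such block, Lemmas \ref{GenCharLemma} and \ref{LemmaExAp} yield $|\sum_{j\in I_l}\Pi_j^{(k)}(it)|\le A_k$ uniformly, and summing over $m_N\asymp\sig_N^2$ blocks gives $|\Pi_{1,N}^{(k)}(it)|\le C_k\sig_N^2$, hence Assumption \ref{GrowAssum}. This blocking argument, not homological reduction, is what substitutes for the missing linear variance growth.
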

We note that our  proofs of 
Theorems \ref{Edg1}, \ref{ThmEssSup} and \ref{EdgThmHold} 
 provide the Edgeworth expansions  with the above polynomials $P_{j,n}$.

The polynomials  $\fP_j$ are given in Definition \ref{EdgPdef}. 
In \S \ref{SecVer} we show that 
for additive functionals of the Markov chains considered in this paper
the Edgeworth polynomials have bounded coefficients. 
This is done by verifying  Assumption \ref{GrowAssum} which ensures the boundness
for an abstract sequence of random variables.

We note that  \eqref{CharFun0} 
holds without any additional assumptions. However, to ensure that
the term $\cE_{r,N}(z)$ provides a good approximation to $\bbP(\hat S_N\leq z)$ we need to 
control the LHS of \eqref{CharFun0} on longer intervals of size $B\sig_N^r$ for an arbitrary $B.$
In the case $r=1$ the contribution of 
$[-B \sig_N, B \sig_N]\setminus [-\delta_1 \sig_N, \del_1 \sig_N] $
is analyzed in \cite{DS}. The case $r>1$ is addressed in Sections \ref{ScLInf} and \ref{ScRM} where we control
the characteristic function of $\hat S_N$ under the assumptions of 
Theorems \ref{ThmEssSup} and \ref{EdgThmHold}, respectively.

\section{Background}
\label{ScBack}
\subsection{A sequential RPF theorem}\label{ScRPF}
For all $j\in\bbN$ and $z\in\bbC$,
let $R_z^{(j)}$ the operator given by
\[
R_z^{(j)} g(x)=\bbE[g(X_{j+1})e^{zf_j(X_j,X_{j+1})}|X_j=x]=R_j(e^{zf_j(x,\cdot)}g)(x)
\]
where $g:\cX_{j+1}\to\bbR$ is a bounded function. Denote by $B_j$ the space of bounded functions on $\cX_j$, equipped with the supremum norm $\|\cdot\|_\infty$.
For every integer $j\geq1$, $n\in\bbN$ and $z\in\bbC$ consider the
$n$-th order iterates $R_z^{j,n}:B_{j+n}\to B_j$
given by 
\begin{equation}\label{Int Op oter 1}
R_z^{j,n}=R_z^{(j)}\circ R_z^{(j+1)}\circ\cdots\circ R_z^{(j+n-1)}.
\end{equation}
We have the following.

\begin{theorem}\label{RPF}
There exists a complex 
neighborhood $U$ of $0$ which depends only on  
$\DS \|f\|_\infty:=\sup_j\sup|f_j|$ and $\ve_0$ (from the definition of the uniform ellipticity)
so that 
for any $z\in U$ and an integer
 $j\geq1$ there exists a  triplet
$\la_j(z)$, $h_j^{(z)}$ and $\nu_j^{(z)}$ consisting of a nonzero complex number
$\la_j(z)$, a  complex function $h_j^{(z)}\in B_{j}$ and a 
continuous linear functional $\nu_j^{(z)}\in B_j^*$ satisfying that $\nu_j^{(z)}(\textbf{1})=1$, $\nu_j^{(z)}(h_j^{(z)})=1$ and
\[
R_z^{(j)}h_{j+1}^{(z)}=\la_j(z)h_j^{(z)}
,\,\,\text{ and }\,\,(R_z^{(j)})^*\nu_{j}^{(z)}=\la_j(z)\nu_{j+1}^{(z)}
\]
where $(R_z^{(j)})^*:B_{j}^*\to B_{j+1}^*$ is the dual operator of $R_j^{(z)}$ and $B_j^*$ is the dual space of the Banach space $B_j$.
When $z=t\in\bbR$ then  $h_j^{(t)}$ is strictly positive, $\nu_j^{(t)}$ is a probability measure and there exist constants $a,b>0$, which depend only on $\|f\|_\infty$ and $\ve_0$ so that $\la_j(t)\in[a,b]$ and $h_j^{(t)}\geq a$. When $t=0$ we have $\la_j(0)=1$ and $h_j^{(0)}=\textbf{1}$.

Moreover, this triplet is analytic and uniformly bounded.
Namely, the maps 
\[
\la_j(\cdot):U\to\bbC,\,\, h_j^{(\cdot)}:U\to B_j\,\,\text{ and }\,
\nu_j^{(\cdot)}:U\to B_j^*
\]
are analytic, where $B_j^*$ is the dual space of $B_j$,
and there exists a constant $C>0$ so that
\begin{equation}\label{UnifBound.1}
\max\Big(\sup_{z\in U}|\la_j(z)|,\, 
\sup_{z\in U}\|h_j^{(z)}\|_{\infty},\, \sup_{z\in U}
\|\nu^{(z)}_j\|_{\infty}\Big)\leq C
\end{equation}
where $\|\nu\|_{\infty}$ is the 
operator norm of a linear functional $\nu:B_j\to\bbC$.

Furthermore, there exist  constants $C>0$  and $\del\in(0,1)$ such that for all
$n\geq1$, $j\in\bbN$, $z\in U$ and $q\in B_{j+n}$,
\begin{equation}\label{Exp Conv final.0.1.1}
\bigg\|\frac{R_z^{j,n}q}{\la_{j,n}(z)}
-\big(\nu_{j+n}^{(z)}(q)\big)h_j^{(z)}\bigg\|_\infty\leq\\
C\|q\|_\infty\cdot \del^n
\end{equation}
and 
\begin{equation}\label{Exp Conv final dual.0.1.1}
\bigg\|\frac{(R_z^{j,n})^*\mu}{\la_{j,n}(z)}
-\big(\mu h_j^{(z)}\big)\nu_{j+n}^{(z)}\bigg\|_\infty\leq \\
C \|\mu\|_\infty\cdot \del^n
\end{equation} 
where $\DS \la_{j,n}(z)=\prod_{k=0}^{n-1}\la_{j+k}(z)$.
\end{theorem}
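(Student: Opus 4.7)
The plan is to establish the statement first for real parameters $z=t$ in a neighborhood of $0$ by classical positive cone contraction, then extend to a complex neighborhood of $0$ using the complex projective metric of Rugh and Dubois (as adapted to the sequential setting in \cite{HK}), and finally read off analyticity, the uniform bounds \eqref{UnifBound.1}, and the exponential estimates \eqref{Exp Conv final.0.1.1}, \eqref{Exp Conv final dual.0.1.1} from the uniform contraction rate. For $z=0$ there is nothing to do: take $\la_j(0)=1$, $h_j^{(0)}=\textbf{1}$, and let $\nu_j^{(0)}$ be the law of $X_j$; all the claimed properties are immediate.

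For real $t$ in a small neighborhood of $0$ I would work with the cone $\cC_K=\{g\in B_j:g>0,\ \sup g/\inf g\leq K\}$ and use the two-step ellipticity to prove that $R_t^{(j)}\circ R_t^{(j+1)}$ sends every nonzero nonnegative element of $B_{j+2}$ into $\cC_K$, with a constant $K$ depending only on $\ve_0$ and $\|f\|_\infty$. Indeed, \eqref{DLower} and \eqref{DUpper} give uniform upper and positive lower bounds for the two-step kernel, and for small $|t|$ the weights $e^{t(f_j+f_{j+1})}$ are pinched in $[e^{-2|t|\|f\|_\infty},e^{2|t|\|f\|_\infty}]$. By Birkhoff--Hopf this yields a uniform contraction rate $\te<1$ in the Hilbert projective metric, from which $h_j^{(t)}$, $\la_j(t)$, and the dual object $\nu_j^{(t)}$ are obtained as limits of suitably normalized iterates of $\textbf{1}$ and of a reference functional; the lower bound on $\la_j(t)$ and the strict positivity of $h_j^{(t)}$ both follow from the cone data.

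The extension to complex $z$ is the crux. Following the complex cone approach I would complexify $\cC_K$ to $\cC_K^{\bbC}$ and show that for $z$ in a complex neighborhood $U$ of $0$, whose size depends only on $\ve_0$ and $\|f\|_\infty$, the two-step operator $R_z^{(j)}\circ R_z^{(j+1)}$ maps $\cC_K^{\bbC}$ strictly into itself with a uniform contraction rate in the complex projective metric. The key quantitative ingredient is that on $U$ the weights $e^{z(f_j+f_{j+1})}$ remain close to $1$ in modulus and argument while the real part of the two-step density stays bounded below by $\ve_0/2$, which is exactly what the Rugh--Dubois contraction mechanism needs. The triplet $(\la_j(z),h_j^{(z)},\nu_j^{(z)})$ is then extracted as the uniform limit of normalized iterates of $\textbf{1}$ (respectively of a base linear functional), with non-vanishing of $\la_j(z)$ on $U$ ensured by shrinking $U$ once more if necessary, and the normalizations $\nu_j^{(z)}(\textbf{1})=\nu_j^{(z)}(h_j^{(z)})=1$ arranged by a harmless rescaling.

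Once existence is in place, analyticity is automatic: each finite iterate $R_z^{j,n}q$ is entire in $z$ with values in $B_j$, and the uniform cone contraction shows that the normalized iterates converge uniformly on $U$, so $h_j^{(z)}$, $\nu_j^{(z)}$, and $\la_j(z)$ are analytic as uniform Banach-valued limits. The bounds \eqref{UnifBound.1} come directly from the controlled $\sup/\inf$ ratio on cone elements, and \eqref{Exp Conv final.0.1.1}--\eqref{Exp Conv final dual.0.1.1} follow from the uniform contraction rate after decomposing any $q\in B_{j+n}$ into a component along $h_{j+n}^{(z)}$ and a remainder killed by $\nu_{j+n}^{(z)}$. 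The hard part, on which everything hinges, is the complex cone contraction: since $R_z^{(j)}$ is not positivity-preserving and no spectral-gap hypothesis is available for a single operator, the contraction must be extracted from $\ve_0$ and $\|f\|_\infty$ alone, uniformly in $j$ and in $z\in U$, so that the resulting $U$ is truly independent of the particular sequence $\{f_n\}$. Once this step is carried out, the rest of the theorem is essentially bookkeeping.
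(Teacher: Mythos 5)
Your proposal follows essentially the same route the paper takes: the paper does not reprove Theorem~\ref{RPF} but simply cites the sequential complex Ruelle--Perron--Frobenius theorem of \cite[Ch.~4\&6]{HK}, which is proved by exactly the Rugh--Dubois complex projective cone contraction you sketch (real Birkhoff cone for $t\in\bbR$, complexified cone for $z$ near $0$, uniform contraction rate depending only on $\ve_0$ and $\|f\|_\infty$). The one substantive point the paper adds and your sketch omits is a two-sided adaptation: the arguments of \cite{HK} formally require a $\bbZ$-indexed sequence of operators, so one extends the chain backwards by setting $\cX_j=\cX_1$, $R_z^{(j)}g(x)=\bbE[g(X_1)]$ and $f_j\equiv 0$ for $j\le 0$ (equivalently, prepending i.i.d.\ copies of $X_1$); the paper also notes that the measurability-in-$\om$ bookkeeping of \cite{HK}'s random-operator setup can be dropped here. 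A minor caveat: your phrase that ``the real part of the two-step density stays bounded below by $\ve_0/2$'' is not the mechanism in Rugh--Dubois theory; what one actually controls is the gauge of the canonical complexification of the Birkhoff cone, though this is a sketch-level imprecision rather than a gap.
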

The proof of Theorem \ref{RPF} was given in \cite[Ch.4\&6]{HK} by a successive application of the complex projective contraction.
We remark that the arguments in \cite[Ch.4\&6]{HK} formally require us to have a two sided sequence of operators, and in order to overcome this technical difficulty, for $j\leq 0$  we define
 $\cX_j=\cX_1$ and $R_z^{(j)}g(x)=\bbE[g(X_1)]$. This amount to taking independent copies $\{Z_j: j\leq 0\}$ of $X_1$, setting $X_j=Z_j$ for $j\leq 0$ and $f_j=0$. 
 In fact, in \cite[Ch.4\&6]{HK} the setup included random operators $R_{z}^{(j)}=R_z^{\te^j\om}$, when $\om\in\Om$ and $(\Om,\cF,P,\te)$ is some invertible measure preserving system, which is not necessarily ergodic. The main reason for considering random operators in \cite{HK}, and not just a sequence of operators, is that the random Ruelle-Perron-Frobenius (RPF) theorem was needed in the proof of the local CLT from \cite[Ch. 2]{HK}, where random operators arise after a certain conditioning argument. The measurability of the resulting RPF triplets $\la_\om(z), h_\om^{(z)}, \nu_\om^{(z)}$ as functions of $\om$ played an important rule in that proof, which lead to consider a more general steup of random operators in \cite[Ch. 4]{HK}, for which there is meaning to such  measurability. However, in our purely sequential setup such measurability issues do not arises, and thus we can just repeat the arguments from \cite[Ch. 4]{HK}  pertaining to a fixed $\omega$
 and ignore the ones addressing measurability.

\begin{remark}\label{CentRem}
In the proof of the Berry-Esseen theorem and the Edgeworth expansions it will be convenient to assume that $a_n:=\bbE[f_n(X_n,X_{n+1})]=0$. This amount to replacing $f_n$ with $f_n-a_n$, and hence to replacing $R_z^j$ with $e^{-a_jz}R_z^j$  and replacing 
$\la_{j}(z)$ with $e^{-za_j}\la_{j}(z)$. 
\end{remark}

\subsection{The structure constants}
As it was mentioned in the introduction a new feature of our work is that we do not make any assumptions
on how slow variance of $S_N$ grows. 
In this section we recall a few results from \cite{DS} which
provide some geometric control on the variance.
 \smallskip

By a {\em random hexagon based at $n$} we mean a tuple 
$$
P_n=(\mathscr X_{n-2},\mathscr X_{n-1}, \mathscr X_n;  \mathscr Y_{n-1},  \mathscr Y_{n},  \mathscr Y_{n+1})
$$
where $(\mathscr X_{n-2},\mathscr X_{n-1})$ and $( \mathscr Y_n, \mathscr Y_{n+1})$ are independent,   $(\mathscr X_{n-2},\mathscr X_{n-1})$ and $(X_{n-2},X_{n-1})$
 are equality distributed, $( \mathscr Y_n, \mathscr Y_{n+1})$ and $(X_n,X_{n+1})$ are equality distributed and $\mathscr X_n$ and $ \mathscr Y_{n-1}$ are conditionally independent given the previous choices and are sampled according to the {\em bridge distributions}
$$
\bbP(\mathscr X_n\in E|\mathscr X_{n-1}=x_{n-1}, \mathscr Y_{n+1}=y_{n+1})=\bbP(X_n\in E|X_{n-1}=x_{n-1},X_{n+1}=y_{n+1})
$$
and 
$$
\bbP( \mathscr Y_{n-1}\in E| \mathscr X_{n-2}=x_{n-2}, \mathscr Y_{n}=y_{n})=\bbP(X_{n-1}\in E|X_{n-2}=x_{n-2},X_{n}=y_{n}).
$$ 

The balance $\Gamma(P_n)$ of the hexagon is given by  
$$\Gamma(P_n)=f_{n-2}(\mathscr X_{n-2}, \mathscr X_{n-1})+f_{n-1}(\mathscr X_{n-1},\mathscr X_n)+f_n(\mathscr X_n,\mathscr Y_{n+1})$$
$$-
f_{n-2}(\mathscr X_{n-2},\mathscr Y_{n-1})-f_{n-1}(\mathscr Y_{n-1},\mathscr Y_n)-f_n(\mathscr Y_n,\mathscr Y_{n+1}).$$

Next, let
\begin{equation}\label{u n def}
u_n^2=\bbE[\Gamma(P_n)^2].
\end{equation}
\begin{theorem}[\cite{DS}, Theorem 2.1]\label{Thm2.1}
There exist positive constants $C_1,C_2,C_3,C_4$ so that for any $m\geq0$ and $N\geq3$,
\begin{equation}\label{U N V N relation}
 C_1\sum_{n=m+3}^{m+N}u_n^2-C_2\leq V_N=\mathrm{Var}(S_N-S_m)\leq C_3\sum_{n=m+3}^{m+N}u_n^2+C_4.
\end{equation}
\end{theorem}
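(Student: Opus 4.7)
The proof has two halves, and the lower bound is where the uniform ellipticity assumption \eqref{DLower} plays an essential role.

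\textbf{Upper bound $V_N \le C_3 \sum u_n^2 + C_4$.} The plan is a martingale decomposition. Setting $\cF_k = \sigma(X_1, \ldots, X_k)$ and $D_k = \bbE[S_N - S_m \mid \cF_k] - \bbE[S_N - S_m \mid \cF_{k-1}]$, orthogonality gives $V(S_N - S_m) = \sum_k \bbE[D_k^2]$. By the Markov property combined with the spectral gap at $z = 0$ from Theorem \ref{RPF} (exponential decay of correlations under iteration of the $R_j$), the contribution of the future summands $\sum_{j > k}(Y_j - \bbE Y_j)$ to $D_k$ is geometrically small once we are more than a few steps past $k$. Hence $D_k$ is, up to an absolutely summable tail, a function of $(X_{k-1}, X_k, X_{k+1})$. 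A direct computation then bounds $\bbE[D_k^2]$ by a multiple of the conditional variance of the three consecutive summands $f_{k-2}+f_{k-1}+f_k$ given the ``endpoints'' $X_{k-2}$ and $X_{k+1}$, which is $\frac{1}{2} u_k^2$ by the very definition of the hexagon $P_k$. Summing in $k$ and absorbing the $O(1)$ boundary terms near $m$ and $m+N$ into $C_4$ yields the bound.

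\textbf{Lower bound $V_N \ge C_1 \sum u_n^2 - C_2$.} The plan is a block resampling argument. Split $[m+3, m+N]$ into disjoint windows of length $5$; for each window around some index $n$, use \eqref{DLower} to decompose the two-step transition kernel as $p_n p_{n+1} = \ve_0 \, q_n + (1 - \ve_0) \, r_n$, where $q_n$ can be taken to be a product of marginals independent of the starting point. With probability at least $\ve_0$, independent of the rest of the trajectory, the middle positions of the window can then be ``freshly'' resampled from the bridge distribution without affecting the chain outside the window. The resulting perturbation of $S_N - S_m$ has the same distribution as $\Gamma(P_n)$ (up to a uniformly positive conditional factor), so contributes an $L^2$-amount of exact order $u_n^2$ to $V(S_N - S_m)$. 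Summing over disjoint blocks and averaging over the five possible starting offsets recovers $\sum_{n=m+3}^{m+N} u_n^2$ on the right-hand side, with the subtracted constant $C_2$ absorbing boundary effects.

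\textbf{Main obstacle.} The principal technical difficulty is the lower bound, specifically the bookkeeping in the resampling argument. One must keep the resamplings in different blocks conditionally independent while ensuring the coupled trajectories have the correct joint law, and the combinatorial accounting of which blocks ``succeed'' in resampling introduces a Bernoulli-type thinning that must be averaged out. A naive Efron--Stein style inequality does not suffice, because it would lose the precise local structure that $u_n^2$ encodes; a sharp lower bound requires the bridge distribution in the definition of $P_n$, which in turn requires explicit Bayes formulas for the three-step transition kernel. Without \eqref{DLower} the free component $q_n$ vanishes and the lower bound can genuinely fail, since large cancellations between consecutive summands can drive $V_N$ below $\sum u_n^2$.
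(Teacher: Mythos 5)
First, a point of orientation: the paper does not prove this statement at all --- it is imported verbatim from \cite{DS} --- so your sketch must be measured against the argument given there, and against what the statement actually requires.

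The genuine gap is in your upper bound, which you present as the routine half but which is in fact the deep direction of the theorem: it is equivalent to the assertion that summability of the hexagon balances forces center-tightness, i.e.\ it encodes the whole structure theory of \cite{DS}. The martingale identity $\mathrm{Var}(S_N-S_m)=\sum_k\bbE[D_k^2]$ is fine, and by the Markov property $D_k=f_{k-1}(X_{k-1},X_k)+h_k(X_k)-\bbE\left[f_{k-1}(X_{k-1},X_k)+h_k(X_k)\mid X_{k-1}\right]$ with $h_k(x)=\bbE\left[\sum_{j\ge k}f_j(X_j,X_{j+1})\mid X_k=x\right]$. But the step you call a ``direct computation'' --- bounding $\bbE[D_k^2]$ by the conditional variance of $f_{k-2}+f_{k-1}+f_k$ given the endpoints, hence by $u_k^2$ --- does not go through. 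The function $h_k$ aggregates the entire future; exponential mixing controls the $j$-th contribution to its oscillation only by $O(\delta^{j-k}\|f_j\|_\infty)$, which yields $\bbE[D_k^2]=O(1)$ and hence only $V_N=O(N)$, not $O(\sum u_n^2+1)$. To get the stated bound one must exhibit the cancellations between $f_{k-1}(X_{k-1},X_k)$ and $h_k(X_k)$ (these are exactly what make $D_k$ small when the $f_j$ are coboundaries) and then show that whatever survives the cancellation has $L^2$ norm dominated by the nearby $u_n^2$; this is precisely the gradient decomposition constructed in \cite{DS}, a multi-page argument using the bridge distributions, and it cannot be replaced by a local estimate on three consecutive summands. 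A smaller inaccuracy: in the hexagon $P_k$ the two paths are not two independent bridge samples given the endpoints ($\mathscr X_{k-1}$ and $\mathscr Y_k$ carry unconditioned marginals, only $\mathscr X_k$ and $\mathscr Y_{k-1}$ are bridge points), so $u_k^2$ equals twice such a conditional variance only up to multiplicative constants supplied by ellipticity, not ``by the very definition''.

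Your lower bound is closer to workable: the single-block inequality $\mathrm{Var}(S)\ge\bbE[\mathrm{Var}(S\mid\sigma(X_j:j\notin I))]$ plus ellipticity does produce one $u_n^2$, and disjoint blocks with offset averaging is a reasonable plan, though summing the conditional variances over blocks is a reverse Efron--Stein inequality that is false in general and must be justified via the locality of the functional. But note that this direction also follows with no resampling bookkeeping from estimates the paper already quotes: Lemma \ref{Lm2.2} together with Lemma \ref{ThLemma} gives $|\bbE(e^{i\xi (S_N-S_m)})|\le C\exp(-c\xi^2\sum_n u_n^2)$ for $|\xi|\sup_n\|f_n\|_\infty\le\delta$, while the elementary bound $|\bbE(e^{i\xi W})|\ge 1-\xi^2\mathrm{Var}(W)/2$ applied at $\xi^2\asymp(\mathrm{Var}(S_N-S_m)+1)^{-1}$ forces $\sum_n u_n^2\le C'(\mathrm{Var}(S_N-S_m)+1)$. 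The hexagon-resampling machinery you describe is really the engine behind Lemma \ref{Lm2.2} itself; the place where new ideas are genuinely required is the upper bound you dismissed as routine.
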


It turns out that the hexagon process also allows to control the characteristic function of $S_N.$
Denote
\begin{equation}\label{d n def}
d_n(\xi)^2=\bbE[|e^{i\xi\Gamma(P_n)}-1|^2]=4\bbE[\sin^2(\xi\Gamma(P_n)/2)], \quad
D_N(\xi)=\sum_{n=1}^N d_n^2(\xi).
\end{equation}

\begin{lemma}[\cite{DS}, eq. (4.2.6)]
\label{Lm2.2}
There are constants $C, c>0$ so that for each $N$ and $\xi\in\bbR$,
the characteristic function $\Phi_N(\xi)=\bbE\left(e^{i\xi S_N}\right)$ satisfies
\begin{equation}\label{u n rel}
\left|\Phi_N(\xi)\right|\leq C e^{-c D_N(\xi)}. 
\end{equation}
\end{lemma}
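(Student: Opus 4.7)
The plan is to follow the hexagon-swap strategy of \cite{DS}. The key is to work with $|\Phi_N(\xi)|^2=\bbE[e^{i\xi(S_N-S_N')}]$, where $S_N'=\sum_n f_n(X_n',X_{n+1}')$ is built from an independent copy $(X_j')$ of the chain; this quantity is automatically real, and admits a local comparison argument that produces hexagon factors one at a time.

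For each $n\in\{3,\dots,N-1\}$, condition on all variables outside the four free positions $\{X_{n-1},X_n,X_{n-1}',X_n'\}$ and perform a coupling that swaps the outgoing endpoints $X_{n+1}\leftrightarrow X_{n+1}'$. This is a change of measure whose Radon--Nikodym derivative,
\[
\frac{p_n(X_n,X_{n+1}')}{p_{n-1,n}(X_{n-1},X_{n+1}')}\cdot\frac{p_n(X_n',X_{n+1})}{p_{n-1,n}(X_{n-1}',X_{n+1})},
\]
is bounded by $\ve_0^{-4}$ thanks to \eqref{DUpper}--\eqref{DLower}. Under the new law, $(X_{n-1},X_n)$ and $(X_{n-1}',X_n')$ are distributed exactly like the hexagon bridges $(\mathscr X_{n-1},\mathscr X_n)$ and $(\mathscr Y_{n-1},\mathscr Y_n)$ with $\mathscr X_{n-2}=X_{n-2}$ and $\mathscr Y_{n+1}=X_{n+1}'$; in particular, the cross-pair $(X_{n-2},X_{n+1}')$ is independent with the correct marginals. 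The local conditional expectation then equals $\bbE[e^{i\xi\Gamma(P_n)}]$ up to a multiplicative constant bounded by $\ve_0^{-4}$.

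Choose positions $n_1<\dots<n_m\subset\{3,\dots,N-1\}$ with consecutive spacings at least $4$ so the hexagon windows $[n_j-2,n_j+1]$ are pairwise disjoint; the Markov property decouples the contributions, giving
\[
|\Phi_N(\xi)|^2\leq C_*\prod_{j=1}^m\bigl|\bbE[e^{i\xi\Gamma(P_{n_j})}]\bigr|
\]
with $C_*$ depending only on $\ve_0$ and $\sup_n\|f_n\|_\infty$. Applying this to each of the four arithmetic progressions $n\equiv r\pmod 4$, $r\in\{0,1,2,3\}$, and multiplying covers every $n\in\{3,\dots,N-1\}$ exactly once. Finally, the factor-wise estimate $|\bbE[e^{i\xi\Gamma(P_n)}]|\leq \exp(-c_0 d_n^2(\xi))$ follows from the iid symmetrization identity $|\bbE[e^{i\xi\Gamma}]|^2=1-2\bbE[\sin^2(\xi(\Gamma_1-\Gamma_2)/2)]$, a comparison of the symmetrized variance with $d_n^2(\xi)=4\bbE[\sin^2(\xi\Gamma(P_n)/2)]$ using the uniform bound $|\Gamma(P_n)|\leq 6\sup_n\|f_n\|_\infty$, and the elementary inequality $1-x\leq e^{-x}$ valid for $x$ in a bounded range. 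Taking eighth roots yields the claim, with $c=c_0/8$. The main obstacle is the hexagon identification in the second paragraph: confirming that the endpoint swap produces precisely the two-step bridge distribution prescribed by the hexagon requires a careful tracking of the induced conditional densities, and this is where the two-step uniform ellipticity \eqref{DLower} is essential for keeping the Radon--Nikodym derivatives uniformly bounded.
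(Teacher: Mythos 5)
The paper itself offers no proof of this lemma: it is imported verbatim from \cite{DS} (their eq.\ (4.2.6)), so I am judging your argument on its own merits. Your overall strategy --- symmetrize to get $|\Phi_N(\xi)|^2=\bbE[e^{i\xi(S_N-S_N')}]$, peel off one hexagon per residue class mod $4$ by conditioning, and use the two-step ellipticity to compare the induced bridge law with the hexagon law --- is indeed the strategy of \cite{DS}. However, there is a genuine gap at the central step. You claim that after the endpoint swap ``the local conditional expectation equals $\bbE[e^{i\xi\Gamma(P_n)}]$ up to a multiplicative constant bounded by $\ve_0^{-4}$,'' and then assert $|\Phi_N(\xi)|^2\leq C_*\prod_j|\bbE[e^{i\xi\Gamma(P_{n_j})}]|$ with a single constant $C_*$. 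This fails for two reasons. First, a bounded Radon--Nikodym derivative $W$ does \emph{not} give a multiplicative comparison of oscillatory integrals: $|\bbE[We^{i\theta}]|$ and $|\bbE[e^{i\theta}]|$ need not be comparable at all (either can vanish while the other does not), and in the regime that matters, where $|\bbE[e^{i\xi\Gamma(P_n)}]|=1-\delta$ with $\delta$ tiny, the bound $\ve_0^{-4}(1-\delta)$ is worse than the trivial bound $1$. Second, even granting it, the constant is incurred once per hexagon, so $C_*=\ve_0^{-4m}$ depends on $N$ and each factor $\ve_0^{-4}e^{-c_0d_{n_j}^2(\xi)}$ can exceed $1$, so no bound of the form $Ce^{-cD_N(\xi)}$ with $N$-independent $C$ results. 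The correct comparison --- and the whole reason the structure constants are defined through $d_n^2(\xi)=\bbE[\sin^2(\xi\Gamma(P_n)/2)]$ --- is additive and applied to the \emph{nonnegative} deficiency $1-|\varphi|^2=2\,\bbE[\sin^2(\cdot)]$: expectations of nonnegative functions are monotone under bounded-density changes of measure, so ellipticity yields $1-|\bbE[e^{i\xi(\cdot)}\mid\mathcal F]|^2\geq \ve_0^{k}\cdot\mathrm{const}\cdot d_n^2(\xi)$, whence each conditional factor is at most $(1-cd_n^2(\xi))^{1/2}\leq e^{-cd_n^2(\xi)/2}$ with no accumulating constants.

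A secondary gap is the factor-wise estimate $|\bbE[e^{i\xi\Gamma(P_n)}]|\leq e^{-c_0d_n^2(\xi)}$ as you derive it. Your route passes through $\bbE[\sin^2(\xi(\Gamma_1-\Gamma_2)/2)]\gtrsim \bbE[\sin^2(\xi\Gamma/2)]$ ``using the uniform bound $|\Gamma(P_n)|\leq 6\sup_n\|f_n\|_\infty$,'' but replacing $\sin^2$ by the square of its argument is only legitimate when $|\xi|\,\|f\|_\infty$ is bounded, whereas the lemma is asserted for all $\xi\in\bbR$; for large $\xi$ the two quantities are not comparable (e.g.\ $\varphi=-1$ gives $1-|\varphi|^2=0$ while $1-\mathrm{Re}\,\varphi=2$). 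The clean way around this uses the exchangeability of the two hexagon paths, which makes $\Gamma(P_n)$ symmetric, so that $\bbE[e^{i\xi\Gamma(P_n)}]$ is real and equals exactly $1-\tfrac12 d_n^2(\xi)$; combined with the small ellipticity constant retained from the conditional step, $1-x\leq e^{-x}$ then applies with $x=c\,d_n^2(\xi)<1$ and no range restriction on $\xi$.
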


\subsection{Mixing and moment estimates}

 Next we discuss the mixing properties of $\{X_n\}.$

\begin{lemma}[Proposition  1.11 (2), \cite{DS}]\label{MixLemma}
There exist $\del\in(0,1)$ and $A>0$ so that for all $n,k\in\bbN$ we have 
$$
\left|\text{\rm Cov}\big(f_n(X_n,X_{n+1}), f_{n+k}(X_{n+k},X_{n+k+1})\big)\right|\leq A\del^k.
$$
\end{lemma}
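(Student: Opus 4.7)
The plan is to reduce the covariance to a bound on the oscillation of a conditional expectation, and then invoke the exponential contraction from Theorem \ref{RPF} applied at $z=0$.

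Set $g_n = f_n(X_n, X_{n+1})$ and $g_{n+k} = f_{n+k}(X_{n+k}, X_{n+k+1})$. Conditioning on $X_{n+1}$ and using the Markov property,
\begin{equation*}
\text{Cov}(g_n, g_{n+k}) = \bbE\bigl[(g_n - \bbE g_n)\bigl(\varphi(X_{n+1}) - \bbE g_{n+k}\bigr)\bigr],
\end{equation*}
where $\varphi(x) = \bbE[g_{n+k} \mid X_{n+1}=x]$. For $k \in \{0,1\}$ the result is trivial from the bound $|\text{Cov}(g_n,g_{n+k})| \leq 2\|f\|_\infty^2$ upon enlarging $A$. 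For $k\geq 2$, let $F(y) = \int f_{n+k}(y,z)\, R_{n+k}(y, dz)$, so that $\|F\|_\infty \leq \|f\|_\infty$ and $\varphi$ coincides with the iterate $R_0^{n+1,k-1} F$ in the notation of \eqref{Int Op oter 1}.

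Theorem \ref{RPF} at $z=0$ gives $\la_j(0)=1$ and $h_j^{(0)}=\mathbf{1}$; then \eqref{Exp Conv final.0.1.1} with $j\leftarrow n+1$, $n\leftarrow k-1$, $q\leftarrow F$ yields
\begin{equation*}
\bigl\| \varphi - \nu_{n+k}^{(0)}(F) \bigr\|_\infty \leq C\|f\|_\infty\, \del^{k-1}.
\end{equation*}
Averaging against the law of $X_{n+1}$ and using $\bbE\varphi(X_{n+1})=\bbE g_{n+k}$ shows $|\nu_{n+k}^{(0)}(F)-\bbE g_{n+k}|\leq C\|f\|_\infty\del^{k-1}$, whence $\|\varphi-\bbE g_{n+k}\|_\infty\leq 2C\|f\|_\infty\del^{k-1}$. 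Combined with $\|g_n-\bbE g_n\|_\infty\leq 2\|f\|_\infty$, this yields the lemma with $A=\max(2\|f\|_\infty^{2},\,4C\|f\|_\infty^{2}/\del)$.

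The sole nontrivial ingredient is the spectral gap estimate \eqref{Exp Conv final.0.1.1}; once that is in hand, the rest is a one-line conditioning argument, so there is no real obstacle. An alternative route that avoids Theorem \ref{RPF} is to observe that the two-step lower bound \eqref{DLower} is precisely a Doeblin minorization condition, which yields the same exponential contraction of the transition semigroup in total variation via the classical coupling argument; this is the path taken in \cite{DS}.
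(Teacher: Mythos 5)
Your proof is correct. Note that the paper does not actually prove this lemma; it is imported wholesale from \cite{DS} (Proposition 1.11(2)), where, as you observe, the uniform two-step ellipticity \eqref{DLower} is exploited directly as a Doeblin minorization to produce exponential mixing by coupling. What you have done instead is re-derive the same decay estimate from the $z=0$ specialization of the complex sequential RPF theorem (Theorem \ref{RPF}) that is already part of this paper's toolkit, via the identity $\varphi=R_0^{n+1,k-1}F$ and the contraction \eqref{Exp Conv final.0.1.1} with $\lambda_{j,m}(0)=1$, $h_j^{(0)}=\mathbf{1}$. The conditioning step, the bookkeeping of indices (so that $F\in B_{n+k}$ and the iterate length is $k-1\geq1$), and the separate disposal of $k\in\{0,1\}$ are all correct, and your constant-tracking is consistent. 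The trade-off is that the coupling route is more elementary and gives explicit, quantitative constants directly from $\ve_0$, whereas your RPF route is self-contained within the machinery already set up in Section \ref{ScRPF} of this paper and avoids an additional appeal to \cite{DS}; both deliver the same exponential bound, so either is a legitimate proof.
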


Next, for each $j$ and $n$ consider the random variable $S_{j,n}$
given by 
\begin{equation}\label{Snj}
S_{j,n}=\sum_{k=j}^{j+n-1}f_k(X_k,X_{k+1}).
\end{equation}
Then $S_{1,n}=S_n$.
\begin{lemma}[Lemma 2.16, \cite{DS}]\label{MomLem}
For every integer $p\geq1$ there are constant $C_p,R_p>0$ so that for all $j$ and $n$,
\[
\left|\bbE \left[\big(S_{j,n}-\bbE(S_{j,n})\big)^p\right]\right|\leq R_p+C_p\Big(\text{\rm Var}(S_{j,n})\Big)^{[p/2]}.
\]
\end{lemma}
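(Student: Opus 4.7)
The plan is to bound the cumulants of $S_{j,n}$ linearly in $V$ and then conclude via the moment-cumulant formula. By Remark \ref{CentRem} I may assume $\bbE Y_n=0$ for every $n$, so that $S_{j,n}-\bbE S_{j,n}=S_{j,n}$.

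By Theorem \ref{RPF}, for $\la$ in a fixed complex disk $U\ni 0$ the moment generating function factorises as
$\bbE[e^{\la S_{j,n}}]=F_{j,n}(\la)\prod_{k=j}^{j+n-1}\la_k(\la)$,
where $F_{j,n}$ is analytic on $U$ and the functions $|F_{j,n}|$ and $|\la_k|$ are uniformly bounded above and below. Setting $\psi_{j,n}(\la)=\log\bbE[e^{\la S_{j,n}}]$, this is analytic on $U$ with $\psi_{j,n}(0)=\psi_{j,n}'(0)=0$ and $\psi_{j,n}''(0)=V$, and the $p$-th cumulant decomposes as
$\kappa_p(S_{j,n})=[\log F_{j,n}]^{(p)}(0)+\sum_{k=j}^{j+n-1}[\log\la_k]^{(p)}(0)$.

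The heart of the argument is a local cumulant bound, the Markov-chain analog of the classical inequality $|\kappa_p(Z)|\leq C_p K^{p-2}\text{Var}(Z)$ that holds for any random variable $Z$ with $|Z|\leq K$. Concretely, I would establish
\begin{equation}\label{LocalCumBound}
\sum_{k=j}^{j+n-1}\big|[\log\la_k]^{(p)}(0)\big|\leq C_p(1+V),\qquad p\geq 2.
\end{equation}
For $p=2$ this follows from $V=\psi''_{j,n}(0)-[\log F_{j,n}]''(0)$, refined (to control absolute values rather than signed sums) through the hexagon variance $u_k^2$ from \eqref{u n def} and Theorem \ref{Thm2.1}, which yields $\sum u_k^2\asymp V$. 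For $p\geq 3$, perturbatively expanding the RPF triplet from Theorem \ref{RPF} around $\la=0$ expresses $[\log\la_k]^{(p)}(0)$ as a polynomial in mixed moments of $f_k, f_{k-1}, f_{k-2}$ against bridge-type distributions built from the transfer operators; using the uniform boundedness of $f$ and of the RPF triplet, this polynomial is dominated by $K^{p-2}u_k^2$, which on summation produces \eqref{LocalCumBound}.

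Given \eqref{LocalCumBound}, the moment-cumulant formula
$\bbE S_{j,n}^p=\sum_{\pi\in\mathcal{P}_{\geq 2}(p)}\prod_{B\in\pi}\kappa_{|B|}(S_{j,n})$
(where $\mathcal{P}_{\geq 2}(p)$ denotes partitions of $\{1,\ldots,p\}$ into blocks of size $\geq 2$, the singleton blocks vanishing since $\kappa_1=0$) finishes the proof: any such partition has at most $[p/2]$ blocks, so inserting $|\kappa_{|B|}|\leq C(1+V)$ gives $|\bbE S_{j,n}^p|\leq C_p'(1+V)^{[p/2]}\leq R_p+C_p V^{[p/2]}$. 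The main obstacle is \eqref{LocalCumBound}: a naive application of Cauchy's integral formula to $\log\la_k$ on a fixed disk only yields $|[\log\la_k]^{(p)}(0)|\leq C_p$ uniformly and hence $|\kappa_p(S_{j,n})|\leq C_p n$, adequate when $V\asymp n$ but insufficient in the non-stationary setting where $V$ can grow strictly slower than $n$. Replacing $n$ by $V$ in the summation requires the Bernstein-type improvement via the hexagon identification described above, and constitutes the principal technical difficulty.
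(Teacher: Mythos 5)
Your reduction of the lemma to a cumulant bound is sound in its final step: since $\kappa_1=0$, the moment--cumulant formula only involves partitions into blocks of size at least $2$, each such partition has at most $[p/2]$ blocks, and so $|\kappa_m(S_{j,n})|\leq C_m(1+V)$ for $2\leq m\leq p$ would indeed give $|\bbE[S_{j,n}^p]|\leq R_p+C_pV^{[p/2]}$. The factorization of the moment generating function via Theorem \ref{RPF}, with $\log F_{j,n}$ contributing $O(1)$ to every derivative, is also correct (this is Lemma \ref{LemmaExAp}). The problem is that everything then rests on the estimate $\sum_k\big|[\log\la_k]^{(p)}(0)\big|\leq C_p(1+V)$, which you acknowledge but do not prove, and which is essentially equivalent to the lemma itself. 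Your proposed route to it --- that a perturbative expansion of the RPF triplet shows $\big|[\log\la_k]^{(p)}(0)\big|\leq C_pK^{p-2}u_k^2$ pointwise in $k$ --- is asserted without argument and is far from clear: the derivatives of the local pressure at $0$ are built from correlations of $f_k$ with all $f_{k\pm m}$ weighted by iterated transfer operators, while $u_k^2$ from \eqref{u n def} is a specific antisymmetrized six-point quantity; no mechanism is given for why the former should be dominated by the latter term by term (even for $p=2$ the individual terms $[\log\la_k]''(0)$ need not be nonnegative, and only the signed sum is identified with $V+O(1)$).

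There is also a circularity hazard if one tries to repair this using the machinery of the present paper: the cumulant bounds of the right order that the paper does establish (Corollary \ref{CorDerPress}) are obtained by a block decomposition in which each block $I_l$ has $B_1\leq\mathrm{Var}(S_{I_l})\leq B_2$, and the per-block estimate (Lemma \ref{Taylor Lemma2}) controls $|\Pi_{j,n}^{(k)}(it)|$ precisely by invoking Lemma \ref{MomLem} to bound $\bbE[|S_{j,n}|^k]$ by a power of the block variance. So the pressure-function route in this paper derives cumulant control \emph{from} the moment bound, not the other way around. Note also that the paper itself offers no proof of this lemma --- it is imported verbatim from \cite{DS} --- so a self-contained argument here would have to close the gap you flag by a genuinely different mechanism (e.g.\ a direct combinatorial expansion of the centered moments using the mixing bound of Lemma \ref{MixLemma} together with the variance structure of Theorem \ref{Thm2.1}); as written, the proposal identifies the principal difficulty but does not resolve it.
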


\section{Edgewoth expansions under logarithmic growth assumptions}\label{Sec4}
\subsection{The Edgewoth polynomials}
Let $S$ be a random variable with finite moments of all orders. We recall that 
 the $k$-th cumulant of $S$ is given by
\[
\Gam_k(S)=\frac1{i^k}\frac{d^k}{dt^k}\big(\ln\bbE[e^{itS}]\big)\big|_{t=0}.
\]
Note that $\Gamma_k(aS)=a^k\Gamma_k(S)$ for every $a\in\bbR$. Moreover,
 $\Gamma_1(S)=\bbE[S]$, $\Gamma_2(S)=\text{Var}(S)$ and for $k\geq3$
  by (1.34) in \cite{SaulStat},  we have
\begin{equation}\label{Cumulants formula}
\Gamma_k(S)=\sum_{v=1}^k\frac{(-1)^{v-1}}{v}\sum_{k_1+...+k_v=k}\frac{k!}{k_1!k_2!\cdots k_v!}\al_{k_1}\al_{k_2}\cdots\al_{k_v}
\end{equation}
where $\al_m=\al_m(S)=\mathbb E[S^m]$ (this formula is a consequence of the Taylor expansion of the function $\ln(1+z)$). 

The cumulants of order $k\geq3$ measure the distance of the distribution of $\hat S=\big(S-\bbE[S]\big)/\sig$,  from the standard normal distribution, where $\sig=\sqrt{\text{Var}(S)}$, assuming of course that $\sig>0$. We have $\Gam_k(S)=0$ for all $k\geq3$ if and only if $\hat S$ is standard normal, and we refer to \cite{SaulStat} for  conditions on $\Gamma_k(\hat S)$
which insure that the distribution function of $\hat S$ is close to the standard normal distribution function in the uniform metric. We also refer to \cite{Bar, RinRot}  
 for expansions of expectations of smooth functions of
$\hat S$ which involve growth properties  of cumulants.

Next, let us assume that $\bbE[S]=0$ and $\sig^2=\bbE[S^2]>0$.
Consider the function 
$$
\Lambda(t;S)=\ln\bbE[e^{it S/\sig}]+t^2/2.
$$
Then $\Lambda(0;S)=0$, $\Lambda_n'(0;S)=\bbE[S]=0$,  $\Lambda_n''(0;S)=\bbE[S^2]/\sig^2-1=0$, and for $k\geq3$
we have 
$$\Lambda^{(k)}(0):=\frac{d^k}{dt^k}\Lambda(t;S)\big|_{t=0}={i^k}\Gamma_k(S)\sig^{-k}.$$ 
Thus, the $k$-th Taylor polynomial of $\Lambda(t;S)$ is given by
$$
\cP_{k}(t;S)=\sum_{j=3}^{k}\frac{i^j\Gamma_{j}(S)}{j!\sig^j}t^j=
\sum_{j=3}^k i^ja_j(S)\sig^{-(j-2)}t^j.
$$
where\footnote{The reason we divide $\Gamma_{j}(S)$ by $\sig^2$ is that under suitable restrictions on $S$, the quantities $|\Gamma_{j}(S)\sig^{-2}|$ will be bounded by some constant not depending on $S$ (see next section). This will be the case when $S=S_n$, for which the latter quantities will be bounded in $n$. Here $S_n$ are the sums considered in Section \ref{Main}.} $a_j(S)=\frac{\Gamma_{j}(S)}{j!\sig^2}$.
Let us consider the formal power series 
$$\Gamma(t;S)=\sum_{j\geq 3}\frac{i^j\Gamma_j(S)}{j!\sig^j}t^j=\sum_{j\geq 3}{i^j}a_j(S)\sig^{-(j-2)}t^j,$$
where $a_j(S)$ is viewed as a variable independent of $\sig$.
 This leads to the following formal series
$$
\exp(\Gamma(t;S))=1+\sum_{j\geq1}\frac{i^j\Gamma(t;S)^j}{j!}
=1+\sum_{j\geq1}\sig^{-j}A_{j}(t;S)
$$
where $A_{j}(t;S)$ is the polynomial given by 
$$
A_{j}(t;S)=\sum_{m=1}^{j}\frac{1}{m!}\sum_{k_1,...,k_m\in \cA_{j,m}}\prod_{u=1}^{m}i^{k_i}a_{k_i}(S)t^{j+2m}
$$
and  $\cA_{j,m}$ is the set of all $m$-tuples $(k_1,...,k_m)$ of integers such that 
$$k_i\geq3\quad\text{and}\quad 
\sum_{i}k_i=2m+j.$$

\begin{definition}\label{EdgPdef}
The $j$-th Edgewoth polynomial $S$ is the unique polynomial $P_{j}(t;S)$ so that the Fourier transform of $\phi(t)P_{j}(t;S)$ is $e^{-t^2/2}A_{j}(t;S)$, where $\phi(t)$ is the standard normal density.
\end{definition}
Notice that the polynomials $A_j(t;S)$ and $P_{j}(t;S)$ depend on $S$ only through the first $3j$ moments. Note also that $A_j(0;S)=0$ for all $j$.

\begin{remark}\label{CompRem}
In order to compute $A_{j}(t;S)$ for $j\leq k$ it is enough to expand 
$e^{\cP_{k+2}(t;S)}$ 
to a power series and represent it in the form 
$\DS 1+\sum_{j\geq1}\sig_n^{-j}\tilde A_{j}(t;S)$. Indeed, it follows that $\tilde A_{j}(t;S)=A_{j}(t;S)$ for all $j\leq k$ since 
$$\Gamma(t,S)-\cP_{k+2}(t;S)=\sig^{-(k+1)}\sum_{j=k+3}^{\infty}i^ja_j(S)\sig^{-(j-k-3)}t^j. $$
 Thus, to compute $A_{j}(t;S)$, $j\leq k$ we first write 
$$
e^{\cP_{k+2}(t;S)}=1+\sum_{j=1}^\infty\frac{\cP_{k+2}(t;S)^j}{j!}.
$$
Now, since $\cP_{k}(t;S)$ has a  factor\footnote{Recall that $a_j(S)$ are viewed as constants.} $\sig^{-1}$, we can compute $A_{j}(t;S)$, $j\leq k$ by considering only the first $k$ summands
$$
1+\sum_{j=1}^{k}\frac{\cP_{k+2}(t;S)^j}{j!}.
$$
After writing the above  expression in the form 
$\DS 1+\sum_{j=1}^{\infty}\sig^{-j}\bar A_{j,k}(t;S)$ (this is a finite sum) we have $A_{j}(t;S)=\bar A_{j,k}(t;S)$ for all $j\leq k$.

 In particular $\DS  \cP_3(t; S)=\frac{i^3a_3(S)t^3}{6\sig}=\frac{A_1(t; S)}{\sig}$, whence
$$ P_1(t; S)=\frac{a_3(S)}{6} (t^3-3t)=\frac{\bbE[(S-\bbE[S])^3]}{6\sig^2}(t^3-3t)$$
where we have used that the transform Fourier of $(t^3-3t)\phi(t)$ is $i^3e^{-\frac12\xi^2}\xi^3$.

 \end{remark}

\subsection{A Berry-Esseen theorem and Edgeworth expansions via decay of characteristic functions}
Let $W_n$ be a sequence of centered random variables so that $\DS \lim_{n\to\infty}\text{Var}(W_n)=\infty$.
Let us set $$\Gamma_n(t)=\Gamma(t;W_n),\quad
\Lambda_n(t)=\Lambda(t;W_n),\quad A_{j,n}(t)=A_j(t;W_n),  
\quad P_{j,n}(t)=P_j(t;W_n). $$
Let us also set $\sig_n=\sqrt{\text{Var}(W_n)}$.
We will prove here Edgeworth expansions under the following logarithmic growth assumptions.
\begin{assumption}\label{GrowAssum}
For some $k\geq3$, for all $j\leq k$ there exist constants $C_j,\ve_j>0$ so that
\begin{equation}\label{GAs}
\sup_{t\in[-\ve_j\sig_n,\ve_j\sig_n]}|\Lambda_n^{(j)}(t)|\leq C_j\sig_n^{-(j-2)}.
\end{equation}
\end{assumption}
Note that under Assumption \ref{GrowAssum} the polynomials $A_{j,n}$ and $P_{j,n}$, $j\leq k$ have bounded coefficients (for that it is enough to only consider $t=0$). For $t=0$ 
conditions of the form
 $|\Lambda_n^{(j)}(0)|=|\Gamma_j(W_n/\sig_n)|\leq (j!)^{1+\gamma}\sig_n^{-(j-2)},\gamma\geq0$ 
appear in literature \cite{Dor,DN, SaulStat} in the context of moderate deviations and related results (see also references therein).

The relevance of Assumption \ref{GrowAssum} stems from the following facts.
 
\begin{proposition}\label{BE prop}
Let Assumption \ref{GrowAssum} hold with $k=3$. Then
there exists a constant $C>0$ so that for every $n\geq1$
we have 
$$
\sup_{t\in\bbR}\left|\bbP(W_n/\sig_n\leq t)-\Phi(t)\right|\leq C\sig_n^{-1}
$$
where $\Phi$ is the standard normal distribution and density function.
\end{proposition}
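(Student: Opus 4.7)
My plan is to apply Esseen's smoothing inequality at scale $T=\ve\sig_n$: for any $T>0$,
\begin{equation*}
\sup_t |\bbP(W_n/\sig_n\leq t)-\Phi(t)| \leq \frac{1}{\pi}\int_{-T}^{T}\left|\frac{\bbE[e^{it W_n/\sig_n}]-e^{-t^2/2}}{t}\right|dt + \frac{C}{T}.
\end{equation*}
The choice $T=\ve\sig_n$, for a small enough $\ve>0$ depending on the constants in Assumption~\ref{GrowAssum}, makes the second term $O(\sig_n^{-1})$ for free; I would then devote the rest of the proof to showing that the integral is also $O(\sig_n^{-1})$.

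For the integral, I would write $\bbE[e^{itW_n/\sig_n}]=e^{-t^2/2+\Lambda_n(t)}$ so that the integrand equals $e^{-t^2/2}|e^{\Lambda_n(t)}-1|/|t|$. Since $W_n$ is centered of variance $\sig_n^2$, one checks that $\Lambda_n(0)=\Lambda_n'(0)=\Lambda_n''(0)=0$, so Taylor's formula with integral remainder together with the $j=3$ instance of Assumption~\ref{GrowAssum} yield
\begin{equation*}
|\Lambda_n(t)|\leq \frac{|t|^3}{6}\sup_{|s|\leq |t|}|\Lambda_n'''(s)|\leq \frac{C_3}{6}\frac{|t|^3}{\sig_n}\qquad(|t|\leq \ve_3\sig_n).
\end{equation*}
I would then choose $\ve$ small enough (in terms of $C_3$ and $\ve_3$) so that $|\Lambda_n(t)|\leq t^2/4$ on $[-\ve\sig_n,\ve\sig_n]$; combined with the elementary inequality $|e^w-1|\leq |w|e^{|w|}$, this produces the pointwise bound
\begin{equation*}
\left|\frac{\bbE[e^{itW_n/\sig_n}]-e^{-t^2/2}}{t}\right|\leq \frac{C\, t^2}{\sig_n}\,e^{-t^2/4}
\end{equation*}
on the truncation interval. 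Since $t\mapsto t^2 e^{-t^2/4}$ is integrable on $\bbR$, integration over $[-\ve\sig_n,\ve\sig_n]$ delivers the desired $O(\sig_n^{-1})$ bound.

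The whole argument is essentially an abstract version of the classical Berry--Esseen derivation for independent sums, with Assumption~\ref{GrowAssum} for $k=3$ playing the role of the usual third-absolute-moment hypothesis; there is no serious obstacle. The only step that requires real care is the calibration of $\ve$: it must be chosen so that the Gaussian factor $e^{-t^2/2}$ still dominates the correction $e^{|\Lambda_n(t)|}$ throughout $[-\ve\sig_n,\ve\sig_n]$, leaving enough Gaussian decay to tame the cubic-in-$t$ growth of the error from the Taylor remainder and keep the integrand in $L^1$.
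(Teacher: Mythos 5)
Your proposal is correct and follows essentially the same route as the paper: Esseen's smoothing inequality at scale $T\asymp\sigma_n$, then a pointwise bound on $e^{-t^2/2}\bigl|e^{\Lambda_n(t)}-1\bigr|/|t|$ coming from the vanishing of $\Lambda_n$, $\Lambda_n'$, $\Lambda_n''$ at $0$ and the third-derivative bound in Assumption \ref{GrowAssum}. The paper packages the truncated-integral estimate as the $r=0$ case of Proposition \ref{PropEdg}, but the content is the same Taylor-remainder argument you give, with $|e^w-1|\leq|w|e^{|w|}$ playing the role of the paper's mean-value-theorem step (\eqref{One}) together with Lemma \ref{Le1}.
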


\begin{proposition}\label{EdgeProp}
Let $r\geq1$ be an integer.
Let Assumption  \ref{GrowAssum} hold with $k=r+3$.
Suppose also that for every $B>0$ and  all 
$\del>0$ 
\begin{equation}\label{Cond2.0}
\int_{\del\leq |x|\leq B\sig_n^{r-1}}|\bbE(e^{ix W_n})/x|dx=o(\sig_n^{-r}).
\end{equation}
Then
\begin{equation}\label{genExp}
\sup_{t}\left|\bbP(W_n/\sig_n\leq t)-\Phi(t)-\sum_{j=1}^r\sig_n^{-j} P_{j,n}(t)\phi(t)\right|=o(\sig_n^{-r})
\end{equation}
where $\Phi$ and $\phi$ are the standard normal distribution and density function, respectively.
\end{proposition}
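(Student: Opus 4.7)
The plan is to reduce the problem via Esseen's smoothing inequality to estimating the difference between $\phi_n(\xi):=\bbE(e^{i\xi W_n/\sig_n})$ and $\widehat{\cE_{r,n}}$ on a long interval, and then to control this difference in three frequency regimes. Writing $F_n(t)=\bbP(W_n/\sig_n\le t)$, Esseen's lemma gives, for any $T>0$,
$$\sup_t|F_n(t)-\cE_{r,n}(t)|\le c\int_{-T}^{T}\frac{|\phi_n(\xi)-\widehat{\cE_{r,n}}(\xi)|}{|\xi|}\,d\xi+\frac{c\|\cE_{r,n}'\|_\infty}{T}.$$
Assumption \ref{GrowAssum} at $\xi=0$ bounds the normalized cumulants $a_j(W_n)$ for $j\le r+3$, so by Definition \ref{EdgPdef} the polynomials $P_{j,n}$ have coefficients uniformly bounded in $n$, and hence $\|\cE_{r,n}'\|_\infty\le C$. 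Choosing $T=B\sig_n^{r}$ the boundary term becomes $O(1/(B\sig_n^{r}))$, which is $o(\sig_n^{-r})$ upon sending $B\to\infty$. I then partition $[-T,T]$ into $|\xi|\le L_n$, $L_n\le|\xi|\le\del\sig_n$, and $\del\sig_n\le|\xi|\le T$, with $L_n=C_r\sqrt{\log\sig_n}$ and $\del>0$ fixed small.

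On the small piece I would Taylor expand $\Lambda_n(\xi)=\cP_{r+2}(\xi;W_n)+R_n(\xi)$, with $|R_n(\xi)|\le C|\xi|^{r+3}/\sig_n^{r+1}$ supplied by the bound on $\Lambda_n^{(r+3)}$ in Assumption \ref{GrowAssum}; on $|\xi|\le L_n$ both $|\cP_{r+2}(\xi;W_n)|$ and $|R_n(\xi)|$ are $o(1)$, so $e^{\Lambda_n(\xi)}=e^{\cP_{r+2}(\xi;W_n)}\bigl(1+O(|\xi|^{r+3}/\sig_n^{r+1})\bigr)$. Invoking Remark \ref{CompRem} to identify the first $r$ terms of the $\sig_n^{-1}$-expansion of $e^{\cP_{r+2}(\xi;W_n)}$ with $\sum_{j=1}^{r}\sig_n^{-j}A_{j,n}(\xi)$, the combined error satisfies
$$e^{\Lambda_n(\xi)}-\Big(1+\sum_{j=1}^{r}\sig_n^{-j}A_{j,n}(\xi)\Big)=O\bigl(\sig_n^{-(r+1)}(1+|\xi|^{3(r+1)})\bigr),$$
and this error moreover vanishes at $\xi=0$ to order at least $r+3$ because each $A_{j,n}$ has lowest-degree monomial $\xi^{j+2}$. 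After weighting by $e^{-\xi^2/2}/|\xi|$ and integrating over $|\xi|\le L_n$ the resulting contribution is $O(\sig_n^{-(r+1)}\mathrm{poly}(L_n))=o(\sig_n^{-r})$.

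For the intermediate region $L_n\le|\xi|\le\del\sig_n$, combining $\Lambda_n(0)=\Lambda_n'(0)=\Lambda_n''(0)=0$ with $|\Lambda_n'''|\le C_3/\sig_n$ from Assumption \ref{GrowAssum} yields $|\Lambda_n(\xi)|\le C|\xi|^3/\sig_n$, so for $\del$ small enough $|\phi_n(\xi)|\le e^{-\xi^2/4}$. Both $\int|\phi_n(\xi)|/|\xi|\,d\xi$ and $\int|\widehat{\cE_{r,n}}(\xi)|/|\xi|\,d\xi$ over this range are then bounded by $Ce^{-L_n^2/4}/L_n$, which with $L_n$ chosen so that $L_n^2/4\ge (r+1)\log\sig_n$ is $O(\sig_n^{-(r+1)})$. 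For the large region $\del\sig_n\le|\xi|\le T$, the change of variable $x=\xi/\sig_n$ transforms hypothesis \eqref{Cond2.0} directly into $\int_{\del\sig_n\le|\xi|\le T}|\phi_n(\xi)|/|\xi|\,d\xi=o(\sig_n^{-r})$, while the Edgeworth-approximation contribution is dominated by a polynomial in $|\xi|$ times $e^{-\del^2\sig_n^2/2}$, decaying super-polynomially in $\sig_n$.

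The most delicate step is the matching in the small region: one must simultaneously control the Taylor remainder $R_n$ and the tail $\sum_{j>r}\sig_n^{-j}\tilde{A}_{j,n}(\xi)$ from the formal expansion of $e^{\cP_{r+2}(\xi;W_n)}$, and verify that after division by $|\xi|$ and integration against the Gaussian their combined effect is $O(\sig_n^{-(r+1)})$. This hinges on the algebraic structure of the polynomials $A_{j,n}$ recorded in Definition \ref{EdgPdef}, which forces the error to vanish to high order at $\xi=0$, together with the uniform boundedness of the coefficients of $\cP_{r+2}(\,\cdot\,;W_n)$ that Assumption \ref{GrowAssum} supplies.
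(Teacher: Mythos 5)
Your argument is correct and, at heart, it is the same proof as the paper's: Esseen smoothing with $T=B\sig_n^{r}$, Taylor expansion of $\Lambda_n$ at the origin matched against the formal Edgeworth series via Remark \ref{CompRem}, and hypothesis \eqref{Cond2.0} for frequencies beyond $\del\sig_n$, with the Fourier-side tail of $\cE_{r,n}$ killed by its Gaussian factor. The one organizational difference is in the low-frequency region: the paper (Proposition \ref{PropEdg}) produces a single estimate on all of $[-\del_r\sig_n,\del_r\sig_n]$ by carrying a factor $e^{-ct^2}$ through the error bounds (this is what Lemmas \ref{Le2} and \ref{Le1} together with the mean-value inequalities \eqref{One}--\eqref{Two} deliver), whereas you introduce an extra logarithmic scale $L_n=C_r\sqrt{\log\sig_n}$ and on the band $L_n\le|\xi|\le\del\sig_n$ you simply bound $|\phi_n|$ and $|\widehat{\cE_{r,n}}|$ separately by Gaussian tails. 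Both routes close; the paper's version saves one region at the cost of more careful propagation of exponential factors, yours trades that for a third region but uses only crude polynomial error bounds near $0$. One local inaccuracy worth flagging: as stated, the bound $O\bigl(\sig_n^{-(r+1)}(1+|\xi|^{3(r+1)})\bigr)$ on $|\xi|\le L_n$ does not vanish at $\xi=0$ and would give a divergent $1/|\xi|$ after Esseen smoothing; your subsequent remark that the error vanishes to order at least $r+3$ at the origin (because $\Lambda_n$ and each $A_{j,n}$ start at degree $\geq 3$ and agree through the truncation order) is exactly the needed correction, and should be built into the displayed bound, e.g.\ as $O\bigl(\sig_n^{-(r+1)}|\xi|^{r+3}(1+|\xi|^{m})\bigr)$ for a suitable fixed $m$.
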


\subsection{Auxillary estimates.}
Here we present several technical estimates needed in the proofs of 
Propositions \ref{BE prop} and \ref{EdgeProp}.

We need two lemmata.

\begin{lemma}\label{Le2}
Let $k\geq3$ be an integer and let 	Assumption \ref{GrowAssum} 
hold with this $k$. 
Then there exist constants $\del_k,B_k>0$ so that for every real $t\in[-\sig_n,\sig_n]$,
$$|\cP_{k,n}(t)|\leq B_k\sig_n^{-1}|t|^3=B_kt^2|t/\sig_n|.$$
Therefore, there is a constant $\del_k>0$ so that for every $t\in[-\del_k\sig_n,\del_k\sig_n]$,
$$|e^{\cP_{k,n}(t)}|\leq e^{t^2/4}.$$
\end{lemma}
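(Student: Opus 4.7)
The plan is to reduce everything to a termwise bound on $\mathcal{P}_{k,n}$ coming from Assumption \ref{GrowAssum} evaluated at $t=0$, and then to exponentiate.

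First I would note that since $\Lambda_n^{(j)}(0) = i^j \Gamma_j(W_n)\sigma_n^{-j}$ for $j\geq 3$, evaluating \eqref{GAs} at $t=0$ gives $|\Gamma_j(W_n)| \leq C_j\sigma_n^2$ for every $3 \leq j \leq k$. Equivalently, recalling $a_j(W_n) = \Gamma_j(W_n)/(j!\sigma_n^2)$, we obtain the key uniform bound
\[
|a_j(W_n)| \leq \frac{C_j}{j!}, \qquad 3\leq j\leq k,
\]
which does not depend on $n$.

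Next I would plug this into the definition
\[
\mathcal{P}_{k,n}(t) = \sum_{j=3}^k i^j a_j(W_n)\sigma_n^{-(j-2)} t^j
\]
to obtain
\[
|\mathcal{P}_{k,n}(t)| \leq \sum_{j=3}^k \frac{C_j}{j!}|t|^j\sigma_n^{-(j-2)} = \frac{|t|^3}{\sigma_n}\sum_{j=3}^k \frac{C_j}{j!}\left(\frac{|t|}{\sigma_n}\right)^{j-3}.
\]
For $|t| \leq \sigma_n$ each factor $(|t|/\sigma_n)^{j-3}$ is at most $1$, so the first inequality of the lemma follows with $B_k := \sum_{j=3}^k C_j/j!$.

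Finally I would set $\delta_k := 1/(4B_k)$. Then for $|t| \leq \delta_k\sigma_n$ we have $B_k|t|^3/\sigma_n = B_k t^2 \cdot (|t|/\sigma_n) \leq t^2/4$, and so $|e^{\mathcal{P}_{k,n}(t)}| \leq e^{\mathrm{Re}\,\mathcal{P}_{k,n}(t)} \leq e^{|\mathcal{P}_{k,n}(t)|} \leq e^{t^2/4}$, which is the second bound. The argument is essentially mechanical; the only ``step'' worth noting is the observation that the apparently weaker point evaluation of Assumption \ref{GrowAssum} at the origin already suffices, since the full polynomial bound for $\mathcal{P}_{k,n}$ follows from pulling out $|t|^3/\sigma_n$ and using $|t|\leq\sigma_n$ to tame the higher-order tail.
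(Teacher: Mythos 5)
Your proof is correct and follows essentially the same approach as the paper: evaluate Assumption \ref{GrowAssum} at $t=0$ to get $|\Gamma_j(W_n)|\leq C_j\sigma_n^2$, factor $|t|^3/\sigma_n$ out of the termwise bound on $\mathcal{P}_{k,n}$, use $|t|\leq\sigma_n$ to tame the remaining powers, and exponentiate. The only (immaterial) difference is the explicit choice of $B_k$: you take $\sum_{j=3}^k C_j/j!$ while the paper uses $k\max_{3\leq j\leq k}C_j$.
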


\begin{lemma}\label{Le1}
Let Assumption \ref{GrowAssum} hold with $k=3$.
Then there exist
$\del_0>0$ and $\al\in(0,1/2)$ so that for every  real
 $t$ such that $|t/\sig_n|\leq \del_0$ we have $|e^{\Lambda_n(t)}|\leq e^{\al t^2}$.
\end{lemma}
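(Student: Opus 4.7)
\emph{Plan.} I would derive the bound via a direct third-order Taylor expansion of $\Lambda_n$ at $0$, using the vanishing of the first three derivatives at the origin together with the uniform derivative bound supplied by Assumption~\ref{GrowAssum} with $j=3$.

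First I would record the boundary data. Since $\Lambda_n(t)=\ln\bbE[e^{itW_n/\sig_n}]+t^2/2$ and $W_n$ is centered with $\mathrm{Var}(W_n/\sig_n)=1$, direct differentiation at $0$ gives
$$
\Lambda_n(0)=0,\qquad \Lambda_n'(0)=i\bbE[W_n]/\sig_n=0,\qquad \Lambda_n''(0)=-\mathrm{Var}(W_n/\sig_n)+1=0.
$$
Applying Taylor's formula with integral remainder (which is valid for complex-valued functions of a real variable, by treating real and imaginary parts separately) therefore yields, for any real $t$ in the interval where $\Lambda_n$ is three-times differentiable,
$$
\Lambda_n(t)=\tfrac{1}{2}\int_0^t (t-s)^2\,\Lambda_n'''(s)\,ds.
$$

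Next I would invoke the $j=3$ case of Assumption~\ref{GrowAssum}, which gives $|\Lambda_n'''(s)|\leq C_3\sig_n^{-1}$ on $|s|\leq \ve_3\sig_n$. Substituting this bound into the integral representation produces
$$
|\Lambda_n(t)|\leq \frac{C_3}{6}\cdot\frac{|t|^3}{\sig_n}=\frac{C_3}{6}\cdot\frac{|t|}{\sig_n}\cdot t^2,\qquad |t|\leq \ve_3\sig_n.
$$
Choosing $\del_0=\min\{\ve_3,\,1/C_3\}$ and $\al:=C_3\del_0/6\leq 1/6$, the right-hand side is at most $\al t^2$ whenever $|t/\sig_n|\leq \del_0$. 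Since $|e^{\Lambda_n(t)}|=\exp\bigl(\mathrm{Re}\,\Lambda_n(t)\bigr)\leq e^{|\Lambda_n(t)|}$, this immediately yields $|e^{\Lambda_n(t)}|\leq e^{\al t^2}$ with $\al<1/2$, which is the required estimate.

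The argument is essentially mechanical, and I do not expect a real obstacle. The only places where any care is needed are (a) verifying that the first two derivatives of $\Lambda_n$ at $0$ vanish, which is just the centering and unit-variance normalization of $W_n/\sig_n$; and (b) justifying the Taylor remainder for a complex-valued function on a real interval, which follows by applying the usual real-variable formula to $\mathrm{Re}\,\Lambda_n$ and $\mathrm{Im}\,\Lambda_n$ individually. All the substantive work is built into Assumption~\ref{GrowAssum}(3), and this lemma is exactly what is needed to cash it in as a sub-Gaussian bound on the moment generating function of $W_n/\sig_n$ near the origin.
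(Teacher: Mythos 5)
Your proof is correct and follows essentially the same route as the paper: both exploit that the second-order Taylor polynomial of $\Lambda_n$ at $0$ vanishes (centering and unit variance of $W_n/\sig_n$) and then bound the remainder via the $j=3$ case of Assumption~\ref{GrowAssum}, yielding $|\Lambda_n(t)|=O(|t|^3/\sig_n)\leq \al t^2$ for $|t/\sig_n|$ small. The only difference is that you use the integral form of the remainder where the paper uses the Lagrange form; this is a cosmetic (if anything slightly cleaner, given that $\Lambda_n$ is complex-valued) variation.
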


\begin{proof}[Proof of Lemmas \ref{Le2} and \ref{Le1}]
Let us first prove Lemma \ref{Le2}.
By taking $t=0$ in \eqref{GAs} and using that $\Gamma_j(aW)=a^j W$
 we have $|\Gamma_j(W_n)|\leq C_j\sig_n^2$. 
 Thus, if $|t/\sig_n|<1$ then with  $\DS A_k=\max_{3\leq j\leq k}C_j$ and $B_k=kA_k$ we have
$$
|\cP_{k,n}(t)|\leq \sum_{j=3}^k\frac{|\Gamma_j(W_n)|}{j!\sig_n^j}|t|^j
\leq A_kt^2\sum_{j=3}^k|t/\sig_n|^{j-2}/j!\leq A_k t^2\sum_{j=3}^k |t/\sig_n|\leq B_k|t|^3\sig_n^{-1}.
$$
Hence, if $|t/\sig_n|\leq\frac{1}{4B_k}:=\del_k$ then $|\cP_{k,n}(t)|\leq t^2/4$ and so 
$$
\left|e^{\cP_{k,n}(t)}\right|\leq e^{t^2/4}.
$$
Finally, to prove Lemma \ref{Le1}, using that the second Taylor polynomial $\cP_{2,n}(t)$ of $\Lambda_n$ around the origin vanishes, we can write write $\Lambda_n(t)=\cP_{2,n}(t)+\cR_{2,n}(t)=\cR_{2,n}(t)$,  where $\cR_{2,n}(t)$ is the Taylor remainder of order $2$ around the origin. Then by 
 the Lagrange form of the Taylor remainder we can write $\cR_{2,n}(t)=\frac{t^3\Lambda_n'''(t_1)}{3!}$ for some $t_1$ such that $|t_1|\leq|t|$. Therefore, 
by  Assumption \ref{GrowAssum} we have $$\cR_{2,n}(t)=O(t^3/\sig_n)=t^2O(|t/\sig_n|),\,  \text{if }\, |t|\leq\ve_1.$$
Thus when $|t/\sig_n|$ is small enough $|\Lambda_n(t)|=|\cR_{2,n}(t)|<t^2/3$, and Lemma \ref{Le1} follows with $\al=1/3$.
\end{proof}

 Combining Lemma \ref{Le1} with Proposition \ref{VerifProp} proven in Section \ref{ScMCCum}
we  recover the following result, which was proved in \cite[Theorem 6.1]{DS} for the uniformly elliptic Markov chains considered in this paper.
\begin{corollary}
Under assumption \ref{GrowAssum} with $k=3$ there exist   constants $c>0$ and $\del>0$ so that for every natural $n$ and $t\in[-\del,\del]$ we have
\[
|\mathbb E[e^{it W_n}]|\leq e^{-ct^2\sig_n^2}.
\]
\end{corollary}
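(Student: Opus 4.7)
The plan is to read off the corollary directly from Lemma \ref{Le1} by unwinding the definition of $\Lambda_n$. Recall that $\Lambda_n(t)=\ln\bbE[e^{itW_n/\sig_n}]+t^2/2$, so for real $t$,
\[
\bbE[e^{itW_n/\sig_n}]=e^{-t^2/2}\cdot e^{\Lambda_n(t)},
\]
which gives $|\bbE[e^{itW_n/\sig_n}]|=e^{-t^2/2}\,|e^{\Lambda_n(t)}|$.

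Now I would substitute $s=t/\sig_n$, i.e.\ $t=s\sig_n$, to convert the inequality of Lemma \ref{Le1} into a bound on the characteristic function of $W_n$ itself. Since Lemma \ref{Le1} provides $\del_0>0$ and $\al\in(0,1/2)$ such that $|e^{\Lambda_n(t)}|\leq e^{\al t^2}$ whenever $|t/\sig_n|\leq\del_0$, writing the condition in terms of $s$ just means $|s|\leq\del_0$. Substituting $t=s\sig_n$ into the displayed identity then yields
\[
|\bbE[e^{isW_n}]|\leq e^{-s^2\sig_n^2/2}\cdot e^{\al s^2\sig_n^2}=e^{-(1/2-\al)s^2\sig_n^2}
\]
for all $|s|\leq\del_0$. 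Setting $c=\tfrac12-\al>0$ and $\del=\del_0$ gives the desired estimate.

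There is essentially no obstacle here: all the analytic work has been done in Lemma \ref{Le1}, whose proof uses the vanishing of the first and second Taylor coefficients of $\Lambda_n$ at $0$ (by construction $\Lambda_n(0)=\Lambda_n'(0)=\Lambda_n''(0)=0$) together with the third-order bound from Assumption \ref{GrowAssum}. The only content in the corollary itself is the bookkeeping of the change of variable between the "normalized" characteristic function $\bbE[e^{itW_n/\sig_n}]$ that appears in the definition of $\Lambda_n$ and the "bare" characteristic function $\bbE[e^{isW_n}]$ in the statement; this is what produces the factor $\sig_n^2$ in the exponent on the right-hand side. The parenthetical remark about Proposition \ref{VerifProp} of Section \ref{ScMCCum} is a separate matter — it says that for the uniformly elliptic Markov chains of the paper, Assumption \ref{GrowAssum} with $k=3$ can be verified, and therefore the abstract corollary specializes to the Markov-chain estimate of \cite[Theorem 6.1]{DS}; I would defer that verification entirely to Section \ref{ScMCCum}.
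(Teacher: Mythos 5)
Your proof is correct and follows exactly the route the paper intends: the paper does not supply a separate argument for this corollary, presenting it merely as a direct consequence of Lemma \ref{Le1} (with Proposition \ref{VerifProp} cited only to show that the abstract hypothesis holds for the Markov chains at hand, thereby recovering \cite[Theorem 6.1]{DS}). Your unwinding of $\Lambda_n$ and the change of variable $t=s\sig_n$, producing $c=\tfrac12-\al$ and $\del=\del_0$, is precisely the intended bookkeeping.
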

\vskip3mm

 The key step in estimating the rate of convergence for the CLT is the following.

\begin{proposition}\label{PropEdg}
Let $r\geq0$ be an integer and let
 Assumption \ref{GrowAssum} hold with $k=r+3$. Then  there is a constant $\del_r>0$ such that 
$$
\int_{-\del_r \sig_n}^{\del_r \sig_n}\left|\frac{\bbE[e^{itW_{n}/\sig_n}]-e^{-t^2/2}(1+Q_{r,n}(t))}{|t|}\right|dt=O(\sig_n^{-r-1})
$$
where for $r=0$ we set $Q_{0,n}(t)=0$ and for $r\geq1$,
\[
Q_{r,n}(t)=\sum_{j=1}^{r}\sig_n^{-j}A_{j,n}(t)
\]

\end{proposition}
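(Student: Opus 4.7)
The plan is to reduce the integrand to an estimate of $|e^{\Lambda_n(t)}-1-Q_{r,n}(t)|$. Indeed, by the very definition of $\Lambda_n$ we have $\bbE[e^{itW_n/\sig_n}]=e^{-t^2/2}e^{\Lambda_n(t)}$, so
$$\frac{\bbE[e^{itW_n/\sig_n}]-e^{-t^2/2}(1+Q_{r,n}(t))}{|t|}=\frac{e^{-t^2/2}\big(e^{\Lambda_n(t)}-1-Q_{r,n}(t)\big)}{|t|}.$$
The aim is to establish the pointwise bound
$$|e^{\Lambda_n(t)}-1-Q_{r,n}(t)|\le C\,\sig_n^{-(r+1)}\,G(|t|)\,e^{t^2/3}\qquad\text{for }|t|\le\del_r\sig_n,$$
where $G$ is a fixed polynomial vanishing at the origin to order at least $r+3$. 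Once this is achieved, multiplying by $e^{-t^2/2}/|t|$ produces an integrand dominated by $\sig_n^{-(r+1)}\,G(|t|)|t|^{-1}e^{-t^2/6}$, which is integrable on $\bbR$ (the vanishing of $G$ at $0$ neutralises the factor $1/|t|$ there), and integrating delivers the claimed $O(\sig_n^{-(r+1)})$.

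To obtain the pointwise bound I would first replace $\Lambda_n(t)$ by its Taylor polynomial of order $r+2$. Assumption \ref{GrowAssum} with $k=r+3$, the Lagrange form of the Taylor remainder, and the vanishing $\Lambda_n(0)=\Lambda_n'(0)=\Lambda_n''(0)=0$ yield
$$\Lambda_n(t)=\cP_{r+2,n}(t)+\cR_{r+2,n}(t),\qquad |\cR_{r+2,n}(t)|\le C\,|t|^{r+3}\sig_n^{-(r+1)}$$
on $|t|\le\ve_{r+3}\sig_n$. Lemma \ref{Le2} gives $|e^{\cP_{r+2,n}(t)}|\le e^{t^2/4}$ on a possibly smaller interval, on which (by shrinking once more) one also has $|\cR_{r+2,n}(t)|\le t^2/100$. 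Combining these,
$$\big|e^{\Lambda_n(t)}-e^{\cP_{r+2,n}(t)}\big|\le |e^{\cP_{r+2,n}(t)}|\cdot|e^{\cR_{r+2,n}(t)}-1|\le C\,|t|^{r+3}\sig_n^{-(r+1)}\,e^{t^2/3}.$$

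The remaining step is to compare $e^{\cP_{r+2,n}(t)}$ with $1+Q_{r,n}(t)$. Split the Taylor series $e^{\cP_{r+2,n}(t)}=\sum_{m=0}^r\cP_{r+2,n}(t)^m/m!+\cT_n(t)$. By Lemma \ref{Le2}, $|\cP_{r+2,n}(t)|\le B_{r+2}|t|^3/\sig_n\le t^2/100$ on the chosen interval, hence $|\cT_n(t)|\le(B_{r+2}|t|^3/\sig_n)^{r+1}e^{t^2/100}=O(|t|^{3(r+1)}\sig_n^{-(r+1)}e^{t^2/100})$. The truncated sum, regarded as a finite polynomial in $\sig_n^{-1}$, decomposes as
$$\sum_{m=0}^r\frac{\cP_{r+2,n}(t)^m}{m!}=1+\sum_{j=1}^{r}\sig_n^{-j}A_{j,n}(t)+\sum_{j=r+1}^{M}\sig_n^{-j}B_{j,n}(t),$$
where the first two blocks reproduce $1+Q_{r,n}(t)$ by the combinatorial identity of Remark \ref{CompRem} (truncating the Taylor series of the exponential at $m\le r$ does not affect the coefficients of $\sig_n^{-j}$ for $j\le r$, since only $m\in[1,j]$ contribute to those). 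Each $B_{j,n}$ is a polynomial with coefficients uniformly bounded (Assumption \ref{GrowAssum} at $t=0$ forces $|a_\ell(W_n)|\le C$) and lowest degree in $t$ at least $j+2\lceil j/r\rceil\ge r+3$, because every monomial contributing to the coefficient of $\sig_n^{-j}$ arises from a product of summands of $\cP_{r+2,n}$ with each index $\ell_i\ge3$. Combining the three pieces yields the desired pointwise bound.

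The main technical point is the final combinatorial bookkeeping: one must verify that the $B_{j,n}$ carry \emph{both} uniform coefficient bounds and the correct order of vanishing at the origin, so that dividing by $|t|$ does not create a non-integrable singularity at $0$. Once this is in place, the remainder of the argument is routine Taylor expansion together with elementary exponential estimates.
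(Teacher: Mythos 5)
Your proposal is correct and follows essentially the same route as the paper's own proof: Taylor-expand $\Lambda_n$ to order $r+2$ with a Lagrange remainder, bound $e^{\cP_{r+2,n}}$ via Lemma \ref{Le2}, truncate the exponential series of $\cP_{r+2,n}$ after the $r$-th power, and invoke Remark \ref{CompRem} to identify the coefficients of $\sig_n^{-j}$, $j\le r$, with $A_{j,n}$. The only cosmetic deviation is that you bound $|e^{\Lambda_n}-e^{\cP_{r+2,n}}|$ through $|e^{\cP_{r+2,n}}||e^{\cR_{r+2,n}}-1|$ whereas the paper uses the mean value theorem; the resulting estimates are interchangeable.
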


\begin{proof}
Write 
\begin{equation}\label{Base}
\bbE[e^{itW_n/\sig_n}]=e^{-t^2/2}e^{\Lambda_n(t)}=e^{-t^2/2}e^{\cP_{r+2,n}(t)+\cR_{r+2,n}(t)}
\end{equation}
where $\cR_{r+2,n}(t)$ is the Taylor remainder of order $r+2$ around $0$. 
Using the Lagrange form of Taylor remainders together with
Assumption \ref{GrowAssum} we get that
\begin{equation}\label{Zero}
\cR_{r+2,n}(t)=O(t^{r+3}\sig_n^{-(r+1)}).
\end{equation}
Next, by the mean value theorem and Lemmas \ref{Le2} and \ref{Le1} there are constants $\del_r>0$, $C_0>0$ and $b_0\in(0,1/2)$ so that if $|t/\sig_n|\leq\del_r$ then
\begin{equation}\label{One}
\left|e^{\Lambda_n(t)}-e^{\cP_{r+2,n}(t)}\right|\leq C_0e^{b_0 t^2}|\cR_{r+2,n}(t)|.
\end{equation}
Moreover, by Lemma \ref{Le2} and the Lagrange form of Taylor remainders,
\begin{equation}\label{Two}
\left|e^{\cP_{r+2,n}(t)}-
\left(1+\sum_{j=1}^{r}\frac{\cP_{{j+2},n}(t)^j}{j!}\right)\right|\leq 
D_{r}e^{b_0t^2}\sig_n^{-(r+1)}|t|^{3(r+2)}
\end{equation}
where $D_{r}>0$ is some constant 
(when $r=0$ then the left hand side vanishes since $\cP_{2,n}(t)=0$).
Combining \eqref{Base}, \eqref{Zero}, \eqref{One} and \eqref{Two}, for every real $t$ so that $|t/\sig_n|\leq \del_{r}$ we have 
$$
\left|\bbE[e^{itW_n/\sig_n}]-e^{-t^2/2}\left(1+\sum_{j=1}^{r}\frac{\cP_{{j+2},n}(t)^j}{j!}\right)\right|
\leq Ce^{-ct^2}\sig_n^{-(r+1)}\max\left(|t|, |t|^{(r+3)(r+2)}\right)
$$
where $c=1/2-b_0>0$.
Next, by Remark \ref{CompRem}, we have
$$\sum_{j=1}^r\frac{\cP_{r+2,n}(t)^j}{j!}=Q_{r,n}(t)+
\max( |t|, |t|^{r(r+2)})
O(\sig_n^{-r-1})$$
where the term  $\max(|t|, |t|^{r(r+2)})O(\sig_n^{-r-1})$ comes from the terms which include powers of $\sig_n^{-1}$ larger than $r$ 
(when $r=0$ both the left hand side and $Q_{r,n}(t)$ equal $0$).
We conclude that
$$
\left|\bbE[e^{itW_n/\sig_n}]-e^{-t^2/2}(1+Q_{r,n}(t))\right|\leq Ce^{-ct^2}\sig_n^{-(r+1)}
\max\left(|t|, |t|^{(r+3)(r+2)}\right).
$$
Therefore,
$$
\int_{-\del_r \sig_n}^{\del_r \sig_n}\left|\frac{\bbE[e^{itW_{n}/\sig_n}]-e^{-t^2/2}(1+Q_{r,n}(t))}{|t|}\right|dt
$$
$$
\leq C\sig_n^{-(r+1)}\int_{-\infty}^{\infty}e^{-ct^2}\left(1+|t|^{(r+3)(r+2)-1}\right)dt\leq C'\sig_n^{-(r+1)}
$$
completing  the proof of the proposition.
\end{proof}

\subsection{Proofs of Propositions \ref{BE prop} and \ref{EdgeProp}}
\begin{proof}[Proof of Proposition \ref{BE prop}]
The first step in the proof is quite standard. 
We use  generalized Esseen inequality \cite[\S XVI.3]{Feller}. 
Let $F:\bbR\to\bbR$ be a probability distribution function and $G:\bbR\to\bbR$ be a differential function with bounded derivative so that $G(-\infty)=0$. Let $f(t)=\int e^{itx}dF(x)$ and $g(t)=\int e^{it x}dG(x)$ be the corresponding Fourier transforms. Then 
for every $T>0$ we have
\begin{equation}\label{LZ}
\sup_{x\in\bbR}|F(x)-G(x)|\leq 2 \int_{-T}^{T}\left|\frac{f(t)-g(t)}{t}\right|dt+\frac{24\|G'\|_\infty}{\pi T}.
\end{equation}

Taking 
$F$ to be the distribution of $W_n/\sig_n$, $G$ to be the standard normal distribution and $T_n= \del_1\sig_n$ 
 where $\del_1$ comes from Lemma \ref{Le2} we conclude that Proposition \ref{BE prop} will follow if we prove that
\begin{equation}\label{BE Need}
\int_{-\del_1\sig_n}^{\del_1\sig_n}\left|\frac{\bbE[e^{it W_n/\sig_n}]-e^{-t^2/2}}{t}\right|dt\leq C\sig_n^{-1}
\end{equation}
for some constant $C$. Finally, \eqref{BE Need} follows from Proposition \ref{PropEdg} with  $r=0$.
\end{proof}

\begin{proof}[Proof of Proposition \ref{EdgeProp}]
Relying on Proposition \ref{PropEdg}, the proof proceeds essentially in the same way as  \cite{Ess,Feller}.
We provide the details  for readers' convenience.

Let $F=F_n$ be the distribution function of $W_n/\sig_n$, and $G=G_{n,r}$ be the function whose Fourier transform is $e^{-t^2/2}(1+Q_{n,r}(t))$, where $Q_{n,r}$ comes from Proposition \ref{PropEdg}. 
Then $G_{n,r}$ has the form 
$$
G_{n,r}(t)=\Phi(t)+\sum_{j=1}^r\sig_n^{-j}P_{j,n}(t){\phi(t)}
$$
where $P_{j,n}$'s  are the Edgeworth polynomials of $W_n$.

Let $\ve>0$ and $B=1/\ve$.
Applying \eqref{LZ} with $F=F_n$, $G=G_n$ and $T=B\sig_n^r$ we obtain 
$$
\sup_{t}\left|P(W_n/\sig_n\leq t)-\Phi(t)-\sum_{j=1}^r\sig_n^{-j} P_{j,n}(t)\phi(t)\right|\leq
I_1+I_2+I_3+O(\ve)\sig_n^{-r}
$$ 
where for $\del$ small enough
$$
I_1=\int_{-\del\sig_n}^{\del \sig_n}\left|\frac{\bbE[e^{it W_n/\sig_n}]-e^{-t^2/2}(1+Q_{r,n}(t))}{t}\right|dt
$$
$$
I_2=\int_{\del\sig_n\leq |t|\leq B\sig_n^r}\left|\frac{\bbE[e^{it W_n/\sig_n}]}{t}\right|dt,
\quad
I_3=\int_{|t|\geq\sig_n\del}e^{-t^2/2}\left|\frac{1+Q_{r,n}(t)}{t}\right|dt.
$$
By Proposition \ref{PropEdg} we have $I_1=o(\sig_n^{-r})$,
 \eqref{Cond2.0}
gives that $I_2=o(\sig_n^{-r})$, while  $I_3=O(e^{-c\sig_n^2})$ for some $c>0$ since $Q_{r,n}$ is a polynomial with bounded coefficients and degree depending only on $r$.
\end{proof}

\section{Application to uniformly elliptic inhomogeneous Markov chains}
\label{ScMCCum}
\subsection{Verification of Assumption \ref{GrowAssum}}
 In this section we consider uniformly bounded additive functional $S_N$ of a Markov chain $X_n$
which satisfies \eqref{DUpper} and \eqref{DLower}.
We prove the following.
\begin{proposition}\label{VerifProp}
The sequence of random variables $S_n$ verifies Assumption \ref{GrowAssum} for every $k$, namely, if $\Lambda_n(t)=\ln\bbE[e^{it S_n/\sig_n}]+t^2/2$ then for every $k\geq3$ there exist constants $\del_k,C_k>0$ so that for all $n$, 
$$
\sup_{t\in[-\sig_n\del_k, \sig_n\del_k]}|\Lambda_n^{(k)}(t)|\leq C_k\sig_n^{-(k-2)}.
$$
\end{proposition}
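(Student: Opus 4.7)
Proof plan: The plan is to apply the sequential RPF theorem, after centering each $f_j$ so that $\bbE[Y_j]=0$ (see Remark \ref{CentRem}). Iterating \eqref{Exp Conv final.0.1.1} with $q=\mathbf{1}$ gives
\[
\bbE[e^{zS_n}]=\la_{1,n}(z)\,\bigl[\mu_1(h_1^{(z)})\,\nu_{n+1}^{(z)}(\mathbf{1})+O(\del^n)\bigr]
\]
on a complex neighborhood $U$ of $0$. The bracketed factor is analytic, uniformly bounded by \eqref{UnifBound.1}, and close to $1$ near $z=0$, so on a smaller neighborhood $U'\subset U$ we may take logarithms to obtain
\[
\ln\bbE[e^{zS_n}]=\Pi_n(z)+R_n(z),\qquad \Pi_n(z):=\sum_{k=1}^n \ln \la_k(z),
\]
where $R_n$ is analytic and uniformly bounded on $U'$; Cauchy's estimates then give that all derivatives of $R_n$ are uniformly bounded in $n$ on a smaller disc. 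Substituting $z=it/\sig_n$ in $\Lambda_n(t)=\ln\bbE[e^{itS_n/\sig_n}]+t^2/2$ and using the chain rule, the required estimate $|\Lambda_n^{(k)}(t)|\leq C_k\sig_n^{-(k-2)}$ on $|t|\leq\del_k\sig_n$ is then equivalent to
$\sup_{|z|\leq\del_k}|\Pi_n^{(k)}(z)|\leq C_k'\,\sig_n^2$.

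At $z=0$ this reduces to the cumulant bound $|\Gamma_k(S_n)|\leq C\sig_n^2$ for $k\geq 3$; the case $k=2$ is automatic since $\Pi_n''(0)=\mathrm{Var}(S_n)+O(1)=\sig_n^2+O(1)$. To obtain the full disc bound I partition $\{1,\ldots,n\}$ into consecutive blocks $B$ of a fixed large length $L$, chosen so that $\del^L$ is small. On each block, \eqref{Exp Conv final.0.1.1} allows me to rewrite $\sum_{k\in B}\ln\la_k(z)$ as the logarithm of the pressure of the block transfer operator product which, up to a uniformly bounded analytic correction, is the log-moment generating function of $\sum_{k\in B}Y_k$ computed under the block reference measure coming from the RPF triple at the block endpoints. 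The second derivative at $z=0$ of this block pressure equals the corresponding block variance, which by Theorem \ref{Thm2.1} is bounded by $C\sum_{k\in B}u_k^2+C'$. Combined with the analyticity and the uniform bounds of Theorem \ref{RPF}, Cauchy's estimates extend this into a uniform bound of the same order on $(\ln \la_k)^{(k)}(z)$ summed over $k\in B$, valid on a fixed complex disc. Summing over blocks and applying Theorem \ref{Thm2.1} once more gives $\sup_{|z|\leq\del_k}|\Pi_n^{(k)}(z)|\leq C_k\sig_n^2+C_k'$, which is what we need.

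The main obstacle is justifying the block-wise bound in terms of $\sum_{k\in B}u_k^2$ rather than the much weaker $O(L)$ estimate one would get by bounding each $\ln\la_k(z)$ separately via Cauchy's formula on a disc where $\la_k$ is bounded and bounded away from zero. In the non-stationary setting consecutive $Y_k$ may largely cancel and $\sig_n^2$ can be much smaller than $n$; this cancellation is invisible at the level of a single eigenvalue $\la_k(z)$ but is precisely what the hexagon variances $u_k^2$ of \cite{DS} are designed to capture, and it becomes visible in the second derivative of the block pressure once we identify the correct block reference measure through the RPF triple. Pushing this identification from $z=0$ to a full complex neighborhood, rather than only to the origin, is the technical heart of the argument and will combine the analytic dependence from Theorem \ref{RPF} with the moment control of Lemma \ref{MomLem} and the exponential mixing of Lemma \ref{MixLemma}.
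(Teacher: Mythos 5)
Your reduction to showing $\sup_{|z|\le\delta_k}|\Pi_{1,n}^{(k)}(z)|\le C_k\sigma_n^2$, via the RPF theorem and the uniformly bounded correction $G_n(z)=\ln\bigl(\mu_1(h_1^{(z)})+\delta_{1,n}(z)\bigr)$, is exactly the paper's first step. You have also correctly identified the central difficulty (cancellations make $\sigma_n^2$ possibly much smaller than $n$, so per-index bounds on $\ln\lambda_k$ are useless). However, the way you propose to overcome it has two genuine gaps, and the second one is fatal.

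First, you partition $\{1,\dots,n\}$ into blocks of a \emph{fixed} length $L$ and aim for a per-block bound of the form $C\sum_{k\in B}u_k^2+C'$. Even granting such a bound, summing over the $n/L$ blocks yields $C\sum_{k=1}^n u_k^2+(n/L)C'$, and the second term is $O(n)$ for fixed $L$, not $O(\sigma_n^2)$. The additive constant $C'$ (forced on you by Theorem \ref{Thm2.1} and by the constant $R_p$ in Lemma \ref{MomLem}) cannot be absorbed when there are $\asymp n$ blocks. The paper sidesteps this by choosing \emph{variable-length} blocks $I_1,\dots,I_{m_n}$ with $B_1\le\mathrm{Var}(S_{I_l})\le B_2$; Theorem \ref{Thm2.1} guarantees this is possible when $B_1$ and $B_2/B_1$ are large, and then the number of blocks is $m_n\asymp\sigma_n^2$, so an $O(1)$ per-block bound already suffices.

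Second, and more seriously, the step ``Cauchy's estimates extend [the second-derivative bound at $0$] into a uniform bound of the same order on higher derivatives on a fixed complex disc'' is not a valid inference. Cauchy's estimates bound derivatives at a point by the supremum of the \emph{function} on a larger disc; they do not propagate a bound on $\Pi''(0)$ to $\Pi^{(k)}(z)$. (A function can have a tiny second derivative at $0$ and a huge $k$-th derivative.) What the paper actually does here is quite different: it proves Lemma \ref{GenCharLemma}, which uses Fa\`a di Bruno's formula on $\Lambda=\ln\varphi$ together with the trivial bounds $|\varphi^{(j)}(t)|\le\bbE|S|^j$ and $|\varphi(t)|\ge\tfrac12$ on $|t|\le r_0=1/(2\sqrt{\bbE S^2})$, to get $|\Lambda^{(k)}(t)|\le D_k\,\bbE|S|^k$ on that disc. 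Applied to a block $S_{j,n}$ with variance in $[B_1,B_2]$, the radius $r_0$ is uniformly bounded below, the moments are uniformly bounded via Lemma \ref{MomLem}, and Lemma \ref{LemmaExAp} transfers the resulting $O(1)$ bound from $\Gamma_{j,n}^{(k)}$ to $\Pi_{j,n}^{(k)}$. Your plan identifies the right ingredients (RPF, Lemma \ref{MomLem}) but does not contain a valid replacement for the Fa\`a di Bruno/moment argument, and the fixed-length blocking defeats the purpose of capturing cancellation.
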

The proof of Proposition \ref{VerifProp} is based on the construction of sequential pressure functions, as described in the following section.
\begin{remark}\label{MCRem}
In \cite[Theorem 4.26]{SaulStat} the authors show that 
 if $\DS S_n=\sum_{j=1}^n Y_j/\sig_n$,  and $\{Y_j\}$ is an
 exponentially fast $\phi$-mixing uniformly bounded  centered Markov chain, 
 such that $\text{Var}(Y_j)$ is bounded away from $0$ then there is a constant $C$ such that 
 for all $m\in \mathbb{N}$
$\DS |\Gamma_j(S_n/\sig_n)|\leq C^mm!\sig_n^{-(m-2)}$.
It follows that  the function $\Lambda_n$ is real analytic and, hence,
Assumption \ref{GrowAssum}  holds for every $k$.
   By \cite[Proposition 1.22]{DS}, the Markov chains $\{X_n\}$ considered in this paper are also exponentially fast $\phi$-mixing, however, we consider functionals $Y_n=f_n(X_n,X_{n+1})$ whose variance can be small, and so  Proposition \ref{VerifProp} cannot be derived from \cite[Theorem 4.26]{SaulStat} despite the related setup.
\end{remark}

\subsection{The sequential pressure function. Definition and basic properties}

 Recall Theorem \ref{RPF}.
For every $j\geq1$, 
denote by $\mu_j$ the distribution of $X_j$ (which is a probability measure on 
$\cX_j$).
Recall that $\la_j(z)$ is uniformly bounded in $j$ and $\la_j(0)=1$.
Let $\Pi_j(z)$ denote 
 the analytic branch  of the logarithms of $\la_j(z)$, such that  $\Pi_j(0)=0$. 
We call $\Pi_j(z)$ the {\em sequential pressure functions}.
 Then  
\begin{equation}\label{Press bound}
\sup_j\sup_{|z|\leq s_0}|\Pi_j(z)|\leq c_0
\end{equation}
where $s_0$ and $c_0$  are some positive constants. We note that all the derivatives of $\Pi_j$ at $z=0$ are real numbers, since the function $\la_j(z)$ is positive for real $z$'s.

\begin{remark}\label{CentRem2}
By Remark \ref{CentRem}, upon replacing $f_n$ with $f_n-\bbE[f_n(X_n,X_{n+1})]$, the resulting pressure function becomes $\Pi_{j}(z)-\bbE[f_n(X_n,X_{n+1})]z$. This has no affect on the value of the pressure function at $z=0$ and on the derivatives of it of any order larger than $1$. Thus, it will essentially make no difference in the following arguments if we have already centralized $f_n$ or not. 
\end{remark}

Let $j,n$ be positive integers.
Set 
\[
\Gamma_{j,n}(z)=\ln\mathbb E[e^{zS_{j,n}}], \quad 
\Pi_{j,n}(z)=\sum_{s=j}^{j+n-1}\Pi_s(z)
\]
where $S_{j,n}$ is defined in \eqref{Snj}.

\begin{lemma}\label{LemmaExAp}
There is a constant $a>0$ with the following property:
for every integer $k\geq 0$ there exists
 $c_k>0$ such that 
 for each $j, n$
for all complex $z$ so that $|z|\leq a$ we have 
\begin{equation}\label{ExpApprox}
\big|\Gamma_{j,n}^{(k)}(z)-\Pi_{j,n}^{(k)}(z)\big|\leq c_k
\end{equation}
where $g^{(k)}(z)$ denotes the $k$-th derivative of a function $g(z).$ 
\end{lemma}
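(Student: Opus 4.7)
The plan is to express the moment generating function of $S_{j,n}$ via the sequential transfer operators from Theorem~\ref{RPF}, use the exponential convergence \eqref{Exp Conv final.0.1.1} to extract a uniformly bounded remainder, and then pass from the $L^\infty$-bound on the remainder to bounds on its derivatives by Cauchy's integral formula. First I would observe that taking $g=\mathbf{1}$ in the definition of $R_z^{j,n}$ yields $R_z^{j,n}\mathbf{1}(x)=\bbE[e^{zS_{j,n}}\mid X_j=x]$, so integrating against $\mu_j$ (the law of $X_j$) gives
$$
e^{\Gamma_{j,n}(z)}=\bbE[e^{zS_{j,n}}]=\mu_j(R_z^{j,n}\mathbf{1}).
$$
Applying \eqref{Exp Conv final.0.1.1} with $q=\mathbf{1}$ and using $\nu_{j+n}^{(z)}(\mathbf{1})=1$ and $e^{\Pi_{j,n}(z)}=\la_{j,n}(z)$, I obtain
$$
e^{\Gamma_{j,n}(z)-\Pi_{j,n}(z)}=\mu_j(h_j^{(z)})+\epsilon_{j,n}(z),\qquad |\epsilon_{j,n}(z)|\leq C\delta^n,
$$
with the bound uniform in $j$ and in $z\in U$.

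Let $F_{j,n}(z):=\Gamma_{j,n}(z)-\Pi_{j,n}(z)$. The main step is to show that $|F_{j,n}(z)|$ is bounded uniformly in $j,n$ on some disk $\{|z|\leq a'\}$. Because $\mu_j(h_j^{(0)})=\mu_j(\mathbf 1)=1$ and the maps $z\mapsto h_j^{(z)}$ are analytic and uniformly bounded in $j$ by \eqref{UnifBound.1}, one can shrink $a'>0$ so that $|\mu_j(h_j^{(z)})-1|\leq 1/4$ for all $j$ and all $|z|\leq a'$. Pick then $n_0$ so large that $C\delta^{n_0}\leq 1/4$. For $n\geq n_0$ this gives $|e^{F_{j,n}(z)}-1|\leq 1/2$; since $F_{j,n}(0)=0$ and $F_{j,n}$ is analytic in the simply connected disk $\{|z|\leq a'\}$, this forces $F_{j,n}$ to equal the principal logarithm of $\mu_j(h_j^{(z)})+\epsilon_{j,n}(z)$, which is bounded by a constant independent of $j,n$. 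For the finitely many small $n<n_0$, one bounds $|\Pi_{j,n}|\leq n_0 c_0$ directly from \eqref{Press bound}, and $|\Gamma_{j,n}|$ is uniformly bounded because $|e^{zS_{j,n}}|\leq e^{a'n_0\|f\|_\infty}$ gives an upper bound on the MGF while the factorization above (together with $|\la_{j,n}(z)|\geq c^{n_0}$ for small $|z|$, using that $\la_k(z)$ is uniformly close to $1$ for $|z|$ small) gives a positive lower bound; hence $|F_{j,n}|$ is bounded for these finitely many values of $n$ too.

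Finally I would pass to derivatives using Cauchy's integral formula. Setting $a=a'/2$, for $|z|\leq a$ and any integer $k\geq 0$,
$$
\big|F_{j,n}^{(k)}(z)\big|\leq \frac{k!}{2\pi}\oint_{|w|=a'}\frac{|F_{j,n}(w)|}{|w-z|^{k+1}}\,|dw|\leq \frac{k!\cdot C\cdot 2^{k+1}}{(a')^{k}}=:c_k,
$$
which is the desired bound. The main obstacle is the argument in Step 3: it is crucial that the right-hand side $\mu_j(h_j^{(z)})+\epsilon_{j,n}(z)$ stays in a small neighbourhood of $1$ uniformly in $j,n$ so that the principal branch of the logarithm is available and gives a value bounded away from $\pm 2\pi i\bbZ\setminus\{0\}$; this is what ensures $F_{j,n}$ does not grow linearly in $n$ even though $\Pi_{j,n}$ and $\Gamma_{j,n}$ individually can. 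The split into $n\geq n_0$ and $n<n_0$ is what makes the uniform branch choice go through, because the error term $O(\delta^n)$ is only small for large $n$.
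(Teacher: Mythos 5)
Your proposal is correct and follows essentially the same route as the paper: express $\bbE[e^{zS_{j,n}}]=\mu_j(R_z^{j,n}\mathbf 1)$, apply the sequential RPF estimate \eqref{Exp Conv final.0.1.1} to write $e^{\Gamma_{j,n}-\Pi_{j,n}}=\mu_j(h_j^{(z)})+\epsilon_{j,n}(z)$ with $|\epsilon_{j,n}|=O(\delta^n)$, bound the logarithm, and then pass to derivatives by Cauchy's formula. The only minor difference is in handling the possibly-large error for small $n$: you split into $n<n_0$ and $n\geq n_0$, whereas the paper instead exploits $\epsilon_{j,n}(0)=0$ to upgrade the bound to $|\epsilon_{j,n}(z)|\leq C|z|\delta^n$ and then shrinks the radius $a$ so that the error is small for all $n$ at once; both variants yield the stated uniform estimate.
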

Note that for $k=0,1,2$ and $z=0$ we have
$\Gamma_{j,n}^{'}(0)=\bbE[S_{j,n}]$, $\Gamma_{j,n}^{''}(0)=\text{Var}(S_{j,n})$ while for larger $k$'s $\Gamma_{j,n}^{(k)}(0)$ is just the $k$-th cumulant of $S_{j,n}$. In particular, 
$$\Pi_{1,n}'(0)=\bbE(S_n)+O(1)\quad \textrm{and} \quad \Pi_{1,n}''(0)=\sig_n^2+O(1) . $$
 
 \begin{proof}
Since $h_j(0)=\textbf{1}$ and the norms $\|h_j^{(z)}\|_\infty$ are uniformly bounded in $j$ around $0$,
it follows from the Cauchy integral formula that $\frac{\partial h_j}{\partial z}$ is uniformly bounded 
around the origin.
Hence, if $\del_0$ is small enough then for any complex $z$ with $|z|\leq \del_0$ we have 
\begin{equation}\label{LB}
\frac12<\inf_{j}|\mu_1(h_1^{(z)})|.
\end{equation}
Recall that
$\DS
\mathbb E[e^{zS_{j,n}}]=\mu_{j}(R_z^{j,n}\textbf{1}).
$
By (\ref{Exp Conv final.0.1.1}), if  $|z|$ is sufficiently small then for all $j$ and $n$ we have 
\begin{equation}\label{Exp}
\mathbb E[e^{zS_{j,n}}]=e^{\sum_{s=j}^{j+n-1}\Pi_s(z)}(\mu_j(h_j^{(z)})+\del_{j,n}(z))
\end{equation}
where $\del_{j,n}$ is an analytic function so that $|\del_{j,n}(z)|\leq C\del^n$ for some $C>0$ and $\del\in(0,1)$ which do not depend on $j$ and $n$. In fact, since $h_j^{(0)}=\textbf{1}$ we have 
$\del_{j,n}(0)=0$ and so  Cauchy integral formula also implies
$|\del_{j,n}(z)|\leq C|z|\del^n$.
Using \eqref{LB}, we can take
the logarithms of both sides of (\ref{Exp}) and  derive that when $|z|$ is sufficiently small, 
 there is a constant $c_0$ so that 
\begin{equation}\label{ExpApprox}
\big|\Gamma_{j,n}(z)-\Pi_{j,n}(z)\big|\leq c_0
\end{equation} 
Applying the Cauchy integral formula once more we conclude that
for each $k$ there exists a constant $c_k>0$ so that for every $j$ and $n$ we have
\begin{equation}\label{ExpApprox}
\big|\Gamma_{j,n}^{(k)}(z)-\Pi_{j,n}^{(k)}(z)\big|\leq c_k
\end{equation} 
and the lemma follows.
\end{proof}

\subsection{The derivatives of the pressure function around the origin}

Here we prove several useful auxiliary estimates.
\begin{lemma}\label{GenCharLemma}
Let $k\geq2$ be an integer, and let $S$ be a real-valued random variable with finite first $k$ moments. 
Let us define $\varphi(t)=\mathbb E\left(e^{itS}\right)$ and $\Lambda(t)=\ln\varphi(t)$. Then there exists a constant $D_k$ which depends only on $k$ so that with 
$\DS r_0=\frac1{2\sqrt{\bbE(S^2)}}$
$$
\sup_{t\in[-r_0,r_0]} |\Lambda^{(k)}(t)|\leq D_k \bbE[|S|^k].
$$
\end{lemma}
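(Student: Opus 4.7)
I would prove the bound by combining Faà di Bruno's formula with two elementary estimates: a pointwise bound on derivatives of $\varphi$ via the $L^p$ norms of $S$, and a lower bound on $|\varphi(t)|$ on the interval $[-r_0, r_0]$.

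First, I would observe that $\varphi$ is $k$-times differentiable (by dominated convergence, since $\mathbb{E}[|S|^k]<\infty$) and
$$\varphi^{(j)}(t)=\mathbb{E}\big[(iS)^j e^{itS}\big],\qquad |\varphi^{(j)}(t)|\leq \mathbb{E}[|S|^j],\qquad 0\leq j\leq k.$$

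The second step is to bound $|\varphi(t)|$ away from zero on $[-r_0, r_0]$. Using $\cos x\geq 1-x^2/2$,
$$\mathrm{Re}\,\varphi(t)=\mathbb{E}[\cos(tS)]\geq 1-\tfrac{1}{2}t^2\mathbb{E}(S^2),$$
so for $|t|\leq r_0=\tfrac{1}{2}(\mathbb{E}(S^2))^{-1/2}$ one has $\mathrm{Re}\,\varphi(t)\geq 7/8$, and in particular $|\varphi(t)|\geq 7/8$. This means the branch of logarithm with $\Lambda(0)=0$ is well defined and $C^k$ on this interval.

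The third step is to apply Faà di Bruno's formula to $\Lambda=\ln\circ\,\varphi$, using that the $m$-th derivative of $\ln$ at a point $w$ equals $(-1)^{m-1}(m-1)!\,w^{-m}$:
$$\Lambda^{(k)}(t)=\sum_{\pi\in\mathcal{P}(k)}(-1)^{|\pi|-1}(|\pi|-1)!\,\varphi(t)^{-|\pi|}\prod_{B\in\pi}\varphi^{(|B|)}(t),$$
where $\mathcal{P}(k)$ denotes the set of set-partitions of $\{1,\dots,k\}$. Combining the two previous steps,
$$|\Lambda^{(k)}(t)|\leq \sum_{\pi\in\mathcal{P}(k)}(|\pi|-1)!\,(8/7)^{|\pi|}\prod_{B\in\pi}\mathbb{E}\big[|S|^{|B|}\big].$$
A block $B$ of size $|B|\leq k$ satisfies $\mathbb{E}[|S|^{|B|}]\leq (\mathbb{E}[|S|^k])^{|B|/k}$ by Lyapunov's inequality (applied to the probability measure; we may assume $\mathbb{E}[|S|^k]>0$, otherwise $S=0$ a.s.\ and the claim is trivial). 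Since $\sum_{B\in\pi}|B|=k$, the product telescopes to $\mathbb{E}[|S|^k]$. This yields the bound with
$$D_k=\sum_{\pi\in\mathcal{P}(k)}(|\pi|-1)!\,(8/7)^{|\pi|},$$
which depends only on $k$. I don't see any serious obstacle here; the only mild care point is the lower bound on $|\varphi|$ away from zero, which drives the specific choice of $r_0$ and is exactly why the stated radius is $\tfrac{1}{2}(\mathbb{E}(S^2))^{-1/2}$ rather than something larger.
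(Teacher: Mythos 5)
Your proof is correct and follows essentially the same route as the paper: bound $|\varphi^{(j)}(t)|\leq\bbE[|S|^j]$, show $|\varphi(t)|$ is bounded below on $[-r_0,r_0]$, apply Fa\`a di Bruno to $\ln\circ\,\varphi$, and use H\"older/Lyapunov to collapse $\prod_B\bbE[|S|^{|B|}]$ into $\bbE[|S|^k]$. The only cosmetic difference is that you derive the lower bound on $|\varphi|$ from $\cos x\geq 1-x^2/2$ (giving $7/8$), whereas the paper uses $|\varphi(t)-1|\leq|t|\|S\|_{L^2}$ (giving $1/2$); both suffice.
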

\begin{proof}
We first recall that for  the characteristic function $\varphi(t)=\mathbb E\left(e^{itS}\right)$ 
of a random variable  $S$
with finite first $k$  moments and any real $t$ we have
\[
|\varphi(t)-1|\leq |t|\mathbb E[|S|]\leq |t|\mathbb\|S\|_{L^2}
\]
and that
for $j=0,1,2,...,k$ we have
\begin{equation}\label{Bound0} 
|\varphi^{(j)}(t)|\leq 
\bbE[|S|^j].
\end{equation}

Next, let $\Lambda(t)=\ln\varphi(t)$ and
$\DS r_0=\frac1{2\sqrt{\bbE(S^2)}}$.
 Then $|\varphi(t)|\geq \frac{1}{2}$  for all $t\in[-r_0,r_0]$.
By Fa\'a di Bruno's formula, for every $t\in[-r_0,r_0]$ we have
$$
|\Lambda^{(k)}(t)|
=\left|\sum_{m_1,...,m_{k}}\frac{k!}{\prod_{j=1}^{k}(m_j!(j!)^{m_j})}\cdot\frac1{\varphi(t)^{\sum_{j=1}^k m_j}}\prod_{j=1}^{k}\left((i)^j\bbE[S^je^{it S}]\right)^{m_j}
\right|
$$
where 
$(m_1,...,m_k)$ range over all the $k$-tuples of nonnegative integers such that $\DS \sum_{j}jm_j=k$.
Now the lemma follows from  
\eqref{Bound0} and the H\"older inequality.
\end{proof}

\begin{lemma}\label{Taylor Lemma2}
Fix some integer $k\geq2$ and
let $B_1<B_2$ be constants. Then if $B_1$ is sufficiently large  there are constants
$D$ and $r_0$ depending only on $B_1$, $B_2$ and k so that
for every $t\in[-r_0,r_0]$ and each $ j, n\in\bbN$ such that $B_1\leq \text{Var}(S_{j,n})\leq B_2$,
we have
$$
|\Pi_{j,n}^{(k)}(it)|\leq D.
$$
\end{lemma}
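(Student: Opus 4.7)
The plan is to transfer control from the pressure sum $\Pi_{j,n}^{(k)}$ to the cumulant generating function of the centered partial sum, and then invoke the characteristic--function bound of Lemma \ref{GenCharLemma} together with the moment bound of Lemma \ref{MomLem}. The first step requires no new work: by Lemma \ref{LemmaExAp}, for all $j,n$ and all $|t|\le a$ we have $|\Pi_{j,n}^{(k)}(it)-\Gamma_{j,n}^{(k)}(it)|\le c_k$, so it suffices to bound $|\Gamma_{j,n}^{(k)}(it)|$.

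Next, observe that the centered analogue $\Gamma_{j,n,c}(z):=\ln \bbE[e^{z(S_{j,n}-\bbE[S_{j,n}])}]$ differs from $\Gamma_{j,n}(z)$ only by the linear term $z\cdot \bbE[S_{j,n}]$, so for $k\ge 2$ their $k$-th derivatives coincide. Setting $\Lambda(t):=\Gamma_{j,n,c}(it)=\ln \bbE[e^{it(S_{j,n}-\bbE[S_{j,n}])}]$, the chain rule gives $|\Gamma_{j,n,c}^{(k)}(it)|=|\Lambda^{(k)}(t)|$. I would then apply Lemma \ref{GenCharLemma} to the centered random variable $S=S_{j,n}-\bbE[S_{j,n}]$, whose second moment equals $\text{Var}(S_{j,n})$. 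Because $\text{Var}(S_{j,n})\le B_2$, the admissible interval supplied by that lemma contains $[-1/(2\sqrt{B_2}),\,1/(2\sqrt{B_2})]$ uniformly in $j,n$, and on that interval $|\Lambda^{(k)}(t)|\le D_k\,\bbE[|S_{j,n}-\bbE[S_{j,n}]|^k]$.

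It remains to bound the moment on the right-hand side by a constant depending only on $B_2$ and $k$. Taking $p=2k$ in Lemma \ref{MomLem} yields $\bbE[(S_{j,n}-\bbE[S_{j,n}])^{2k}]\le R_{2k}+C_{2k}\,\text{Var}(S_{j,n})^k\le R_{2k}+C_{2k}B_2^k$, after which Jensen's inequality gives $\bbE[|S_{j,n}-\bbE[S_{j,n}]|^k]\le \sqrt{R_{2k}+C_{2k}B_2^k}$. Setting $r_0:=1/(2\sqrt{B_2})$ and $D:=c_k+D_k\sqrt{R_{2k}+C_{2k}B_2^k}$ closes the estimate, provided $r_0\le a$; this is the sole place where the hypothesis that $B_1$ is sufficiently large enters, since $B_1\le B_2$ and $B_1\ge 1/(4a^2)$ together force $1/(2\sqrt{B_2})\le 1/(2\sqrt{B_1})\le a$. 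The main conceptual step is the reduction to the centered sum: without it, Lemma \ref{GenCharLemma} would produce a bound involving $\bbE[S_{j,n}^2]=\text{Var}(S_{j,n})+\bbE[S_{j,n}]^2$, which can be far larger than $\text{Var}(S_{j,n})$ when the mean is large; with this reduction no further obstacle arises.
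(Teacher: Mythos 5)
Your proof is correct and follows essentially the same route as the paper: reduce via Lemma \ref{LemmaExAp} to bounding $\Gamma_{j,n}^{(k)}$, apply Lemma \ref{GenCharLemma}, and then close with the moment estimate of Lemma \ref{MomLem}. The one small refinement is that you make the centering of $S_{j,n}$ explicit inside the proof (so that Lemma \ref{GenCharLemma} is applied with $\bbE[S^2]=\mathrm{Var}(S_{j,n})$ and the absolute $k$-th moment is handled via $p=2k$ and Jensen), whereas the paper relies on the standing convention of Remarks \ref{CentRem} and \ref{CentRem2} that the $f_n$ have already been centered, which is why its direct application of the two lemmas with $S=S_{j,n}$ is legitimate.
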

\begin{proof}

Let $\Lambda_{j,n}(t)=\ln\bbE[e^{it S_{j,n}}]$. Then, in the notation of Lemma \ref{LemmaExAp},
 $\Lambda_{j,n}(t)=\Gamma_{j,n}(it)$.
 Applying Lemma \ref{GenCharLemma} with $S=S_{j,n}$ and using 
\eqref{ExpApprox} and Lemma \ref{MomLem} we obtain that for every $t\in[-r_0,r_0]$ we have
$$
|\Pi_{j,n}^{(k)}(it)|\leq c_{k}+|\Lambda_{j,n}^{(k)}(t)|\leq c_{k}+D_k\bbE[|S_{j,n}|^{k}]
\leq 
c_{k}+C\big(\text{Var}(S_{j,n})\big)^{k/2}\leq c_{k}+ C B_2^{k/2}
$$
competing the proof of the lemma. 
\end{proof}

\begin{corollary}\label{CorDerPress}
For every $k\geq2$ there exist constants $\ve_k>0$ and $C_k>0$ so that for each $n\in\bbN$ and $t\in[-\ve_k,\ve_k]$, 
$$
|\Pi_{1,n}^{(k)}(it)|\leq C_k\sig_n^2. 
$$
Hence, with $\tilde\Pi_{n}(t)=\Pi_{1,n}(it/\sig_n)$ he have
$$
\sup_{t\in[-\ve_k\sig_n, \ve_k\sig_n]}|\tilde\Pi^{(k)}_n(t)|\leq C_k\sig_n^{-(k-2)}.
$$
\end{corollary}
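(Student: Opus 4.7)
The plan is to decompose $\Pi_{1,n}(z)$ as a sum of block pressures, each of which is controlled on a fixed neighborhood of the origin via Lemma~\ref{Taylor Lemma2}, and then show that the number of blocks grows at most like $\sigma_n^2$. Since $\Pi_{j,n}(z) = \sum_{s=j}^{j+n-1} \Pi_s(z)$, any partition $1 = j_1 < j_2 < \cdots < j_{M+1} = n+1$ of $\{1,\ldots,n\}$ gives $\Pi_{1,n}(z) = \sum_{i=1}^M \Pi_{j_i,\ell_i}(z)$ with $\ell_i = j_{i+1} - j_i$. I will construct the partition so that each block has variance in a fixed window $[B_1, B_2]$, with $B_1$ chosen large enough for Lemma~\ref{Taylor Lemma2}; that lemma then yields a uniform bound $|\Pi_{j_i,\ell_i}^{(k)}(it)| \leq D$ on some interval $[-r_0, r_0]$ depending only on $B_1, B_2, k$.

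For the construction I would use a greedy procedure: set $j_1 = 1$ and, given $j_i$, let $j_{i+1}$ be the smallest index with $\mathrm{Var}(S_{j_i, j_{i+1} - j_i}) \geq B_1$. The key observation is that $|\mathrm{Var}(S_{j_i,\ell+1}) - \mathrm{Var}(S_{j_i,\ell})| \leq K$ for a constant $K$ depending only on $\sup_n \|f_n\|_\infty$ and on $A, \delta$ from Lemma~\ref{MixLemma}; indeed, this increment equals $\mathrm{Var}(Y_{j_i+\ell}) + 2\,\mathrm{Cov}(S_{j_i,\ell}, Y_{j_i+\ell})$, and the covariance term is absolutely summable by the exponential mixing bound. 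Hence every non-terminal block lies in $[B_1, B_1 + K]$, so $B_2 := B_1 + K$ works. A terminal remainder (if present, with variance $< B_1$) has uniformly bounded moments, so by Lemmas~\ref{LemmaExAp}, \ref{GenCharLemma}, and~\ref{MomLem} its pressure derivative $|\Pi_{\mathrm{rem}}^{(k)}(it)|$ is $O(1)$ on a fixed neighborhood of the origin.

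To count the blocks I would invoke Theorem~\ref{Thm2.1}: for each non-terminal block, $B_1 \leq \mathrm{Var}(S_{j_i,\ell_i}) \leq C_3 \sum_{s=j_i+3}^{j_i+\ell_i} u_s^2 + C_4$, whence $\sum_{s=j_i+3}^{j_i+\ell_i} u_s^2 \geq (B_1 - C_4)/C_3$ provided $B_1 > C_4$. Summing over the disjoint index sets and using the matching upper bound $\sum_{s=1}^n u_s^2 \leq (\sigma_n^2 + C_2)/C_1$ from the same theorem gives $M = O(\sigma_n^2 + 1)$. Combining with the per-block bound yields $|\Pi_{1,n}^{(k)}(it)| \leq M D + O(1) = O(\sigma_n^2 + 1)$ for $|t| \leq \varepsilon_k := r_0$, which is the first assertion (the additive $O(1)$ is harmless in the regime of interest $\sigma_n \to \infty$, and otherwise absorbed in $C_k$). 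The second assertion follows immediately from the chain rule: $\tilde\Pi_n^{(k)}(t) = (i/\sigma_n)^k\, \Pi_{1,n}^{(k)}(it/\sigma_n)$, so for $|t| \leq \varepsilon_k \sigma_n$ one obtains $|\tilde\Pi_n^{(k)}(t)| \leq C_k \sigma_n^2 / \sigma_n^k = C_k \sigma_n^{-(k-2)}$.

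The main obstacle I anticipate is coordinating the constants: $B_1$ must be chosen large enough both for Lemma~\ref{Taylor Lemma2} to apply \emph{and} for $B_1 > C_4$, while $B_2 = B_1 + K$ must stay independent of $n$ so that $r_0 = r_0(B_1, B_2, k)$ supplies a single positive $\varepsilon_k$ valid for all $n$. Once these constants are fixed, the additivity of the pressure, the greedy block construction, and the $u_n^2$-counting via Theorem~\ref{Thm2.1} assemble directly into the claimed bound.
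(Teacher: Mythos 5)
Your proposal is correct and follows essentially the same block-decomposition strategy as the paper's proof: partition $\{1,\dots,n\}$ into disjoint intervals with block variance in a fixed window $[B_1,B_2]$, apply Lemma~\ref{Taylor Lemma2} to each block, and count the blocks via the $u_n^2$-comparison in Theorem~\ref{Thm2.1}. The paper states the block construction more tersely (asserting its feasibility directly from \eqref{Block Var}, in which the uniform boundedness of $u_n^2$ already controls the per-step variance increment), whereas you spell out a greedy construction with increments controlled via Lemma~\ref{MixLemma} and handle the terminal remainder block explicitly; these are minor variations in exposition rather than a different argument.
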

\begin{proof}
Fix some $k\geq2$.  Let $B_1$ and $B_2$ be large constants so that 
 Lemma \ref{Taylor Lemma2} holds.
 Let $r_0$ be the constant specified in Lemma \ref{Taylor Lemma2}.  
Let $I_1,I_2,...,I_{m_n}$ be disjoint intervals whose union cover $\{1,...,n\}$ so that 
\[
B_1\leq \text{Var}(S_{I_l})\leq B_2
\]
where for each $l$ we set $\DS S_{I_l}=\sum_{j\in I_l}f_j(X_j,X_{j+1})$. 
Note that it is indeed possible to find such intervals
if $B_1$ and $B_2/B_1$ are sufficiently large because  of Theorem \ref{Thm2.1}. Indeed, with $u_n^2$ denoting the structural constants appearing there, there are constants $C_1,C_2>0$ so that for any $n\geq 3$ and $j$,
\begin{equation}\label{Block Var}
 C_1^{-1}\sum_{m=j}^{j+n-1}u_m^2-C_2\leq \text{Var}(S_{j,n})\leq C_1\sum_{m=j}^{j+n-1}u_m^2+C_2.
\end{equation}
It is also clear that $m_n/\sig_n^2$ is uniformly bounded away from $0$ and $\infty$ (if $n$ is large enough). Now, by Lemma \ref{Taylor Lemma2}  there are $\ve_k>0$ and $A_k>0$ so that for each $1\leq l\leq m_n$ and $t\in[-\ve_k,\ve_k]$,
$$
\left|\sum_{j\in I_l}\Pi_{j}^{(k)}(it)\right|\leq A_k.
$$
Hence, 
$\DS 
|\Pi_{1,n}(it)|\leq \sum_{l}\left|\sum_{j\in I_l}\Pi_{j}^{(k)}(it)\right|\leq A_km_n\leq C_k\sig_n^2.
$
\end{proof}

\subsection{Verification of Assumption \ref{GrowAssum}}\label{SecVer}
\begin{proof}[Proof of Proposition \ref{VerifProp}]
Since both sides of \eqref{Exp} with $j=1$ are analytic, $|\del_{1,n}(z)|\leq C|z|\del^n$ for some $\del\in(0,1)$ and $C>0.$ Moreover $\mu_1(h_1^{(0)})=1$. Hence, if $|z|$ is small enough then 
$$
\ln\bbE[e^{zS_n}]=\Pi_{1,n}(z)+G_{n}(z)
$$
where $G_n(z)=\ln\big(\mu_1(h_1^{(z)})+\del_{1,n}(z)\big)$, which is an analytic and uniformly bounded function around the origin (uniformly in $n$).
 Thus  Proposition \ref{VerifProp} follows from Corollary~\ref{CorDerPress}.
\end{proof}

\begin{corollary}\label{EdgeProp1}
Let $r\geq1$.
Suppose that for any $B>0$ and $\del>0$ small enough,
\begin{equation}\label{Cond2}
\int_{\del\leq |x|\leq B\sig_n^{r-1}}|\bbE(e^{ix S_n})/x|dx=o(\sig_n^{-r}).
\end{equation}
Then
\begin{equation}\label{genExp}
\sup_{t}\left|\bbP((S_n-\bbE[S_n])/\sig_n\leq t)-\Phi(t)-\sum_{j=1}^r\sig_n^{-j} P_{j,n}(t)\phi(t)\right|=o(\sig_n^{-r})
\end{equation}
where $\Phi$ and $\phi$ are the standard normal distribution and density function, respectively, and 
$P_{j,n}(t)=P_j(t,\hat S_n)$ are the Edgeworth polynomials of 
$\bar S_n=S_n-\bbE[S_n].$
\end{corollary}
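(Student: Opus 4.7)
The claim is essentially a packaging of the two preceding results (Proposition \ref{VerifProp} together with Proposition \ref{EdgeProp}), once one takes proper care of centering. Accordingly, my plan is to reduce to the centered situation and then verify that both hypotheses of Proposition \ref{EdgeProp} are met for the random variable $W_n = S_n - \bbE[S_n]$.

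\smallskip

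First, I would set $W_n = \bar S_n = S_n - \bbE[S_n]$, so that $\bbE[W_n]=0$ and $\mathrm{Var}(W_n) = \sig_n^2 \to \infty$. Note that the Edgeworth polynomials are unchanged: by definition they depend only on the cumulants $\Gamma_j(W_n)$ for $j\geq 2$, and these agree with $\Gamma_j(S_n)$ for all $j\geq 2$. Thus writing the expansion \eqref{genExp} in terms of $P_{j,n}(t)=P_j(t;\bar S_n)$ is consistent with applying Proposition \ref{EdgeProp} to $W_n$.

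\smallskip

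Second, I would verify Assumption \ref{GrowAssum} (with $k=r+3$) for $W_n$. This is exactly Proposition \ref{VerifProp}, applied after the harmless centering adjustment spelled out in Remarks \ref{CentRem} and \ref{CentRem2}: replacing $f_n$ by $f_n - \bbE[f_n(X_n,X_{n+1})]$ shifts each sequential pressure function $\Pi_j(z)$ by a linear term in $z$, which affects only the first derivative at $0$ and leaves all higher derivatives untouched. Consequently the bounds $\sup_{|t|\le \ve_k \sig_n} |\Lambda_n^{(k)}(t)|\le C_k \sig_n^{-(k-2)}$ established for $S_n$ transfer verbatim to $W_n$ for every $k$, in particular for $k\leq r+3$.

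\smallskip

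Third, I would translate the hypothesis \eqref{Cond2} of the corollary into hypothesis \eqref{Cond2.0} of Proposition \ref{EdgeProp}. This is immediate: since $\bbE[e^{ix W_n}] = e^{-ix \bbE[S_n]} \bbE[e^{ix S_n}]$, we have $|\bbE[e^{ix W_n}]| = |\bbE[e^{ix S_n}]|$, so the two integrals coincide. With both hypotheses of Proposition \ref{EdgeProp} in hand, applying it to $W_n$ with the given $r$ gives exactly \eqref{genExp}.

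\smallskip

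There is no genuine obstacle here; the only subtlety worth flagging is the bookkeeping around centering, namely that Proposition \ref{EdgeProp} is stated for centered variables while Proposition \ref{VerifProp} is stated for the uncentered sums $S_n$, and one must check that neither the assumption \ref{GrowAssum}, the decay hypothesis, nor the identity of the Edgeworth polynomials is affected by subtracting $\bbE[S_n]$. All three checks are one-line observations, so the corollary follows immediately.
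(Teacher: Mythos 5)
Your proposal is correct and follows exactly the same route the paper takes: the paper's proof is a single sentence noting that the corollary follows from Proposition \ref{EdgeProp} since $S_n$ verifies Assumption \ref{GrowAssum} (by Proposition \ref{VerifProp}). Your write-up simply makes explicit the centering bookkeeping (that $|\bbE[e^{ixW_n}]|=|\bbE[e^{ixS_n}]|$, that cumulants of order $\geq 2$ and hence the Edgeworth polynomials are translation-invariant, and that subtracting the mean shifts $\Lambda_n$ by a linear term which does not affect derivatives of order $\geq 2$), all of which the paper leaves implicit.
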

Corollary \ref{EdgeProp1} follows from Proposition \ref{EdgeProp} since $S_n$ verifies Assumption \ref{GrowAssum}.

\subsection{A Berry-Esseen theorem and Expansions of order $1$}
\begin{proof}
[Proof of Theorems  \ref{BE} and \ref{Edg1}]

First, Theorem \ref{BE} follows from Propositions \ref{VerifProp} and \ref{BE prop}.

Next, 
applying Theorem 3.5 and inequality (4.2.7) from \cite{DS} we see that if $\{f_n\}$ is irreducible then condition \eqref{Cond2}  with $r=1$
is satisfied. This proves the result.
\end{proof}

\section{High order expansions for summands with small essential supremum, proof of Theorem \ref{ThmEssSup} and \ref{EgThm1}}
\label{ScLInf}

\subsection{Existence of expansions.}

 Recall \eqref{d n def}. 
In order to prove Theorem \ref{ThmEssSup},
we need the following:

\begin{lemma}\label{ThLemma}
\cite[eq. (3.3.7)]{DS}
$\exists \delta>0$ s.t.  if $\|f_n\|_\infty |\xi|\leq\delta$ then
$\DS d_n^2(\xi)\geq \frac{\xi^2 u_n^2}{2}.$
\end{lemma}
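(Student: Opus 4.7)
The plan is to exploit the elementary inequality $\sin^2 y \geq y^2/2$ valid on a neighborhood of $0$, combined with the a priori bound on $|\Gamma(P_n)|$ coming from the uniform boundedness of $f_{n-2}, f_{n-1}, f_n$. This will allow us to convert the oscillatory quantity $d_n^2(\xi) = 4\bbE[\sin^2(\xi\Gamma(P_n)/2)]$ into the second moment $u_n^2$ at the cost of a constant factor $1/2$.

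First I would record the pointwise bound $|\Gamma(P_n)| \leq 2\bigl(\|f_{n-2}\|_\infty + \|f_{n-1}\|_\infty + \|f_n\|_\infty\bigr)$, which is immediate from the explicit formula for the hexagon balance (six terms of the form $f_k$ with $n-2\leq k\leq n$). In the regime where the functions have comparable size (which is the only regime where Lemma \ref{ThLemma} is applied in the sequel), this yields $|\Gamma(P_n)| \leq C\|f_n\|_\infty$ for an absolute constant $C$ (taking $\|f_n\|_\infty$ as shorthand for $\max_{k\in\{n-2,n-1,n\}}\|f_k\|_\infty$).

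Next, I would use the elementary estimate that on some interval $[-y_0,y_0]$ one has $\sin^2 y \geq y^2/2$; for instance, since $y\mapsto (\sin y)/y$ is decreasing on $(0,\pi)$, we have $\sin^2 y / y^2 \geq (2\sqrt 2/\pi)^2 = 8/\pi^2 > 1/2$ for $|y|\leq \pi/4$, so $y_0 = \pi/4$ works. Choose $\delta>0$ so that $\|f_n\|_\infty|\xi|\leq \delta$ forces $|\xi\Gamma(P_n)/2|\leq y_0$ almost surely; in view of the hexagon bound above, any $\delta < y_0/C$ suffices. Under this choice, pointwise almost surely
\[
\sin^2\!\Bigl(\tfrac{\xi\Gamma(P_n)}{2}\Bigr) \geq \tfrac{1}{2}\Bigl(\tfrac{\xi\Gamma(P_n)}{2}\Bigr)^2 = \tfrac{\xi^2\Gamma(P_n)^2}{8}.
\]
Taking expectations and multiplying by $4$ gives $d_n^2(\xi) \geq \xi^2\bbE[\Gamma(P_n)^2]/2 = \xi^2 u_n^2/2$, which is the claim.

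There is no real obstacle here; the only subtlety is fixing the correct interpretation of $\|f_n\|_\infty$ (as controlling the norms of the three observables $f_{n-2},f_{n-1},f_n$ appearing in the definition of $\Gamma(P_n)$) and choosing $\delta$ small enough so that the scalar argument of $\sin^2$ stays inside the interval where the quadratic lower bound is valid. Both are handled by the two steps above.
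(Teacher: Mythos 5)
Your proof is correct. The paper itself gives no argument for this lemma, deferring entirely to the citation \cite[eq.\ (3.3.7)]{DS}, so there is no in-paper proof to compare against; your argument is the natural (and essentially unique) elementary one: the pointwise bound $|\Gamma(P_n)|\le 2\bigl(\|f_{n-2}\|_\infty+\|f_{n-1}\|_\infty+\|f_n\|_\infty\bigr)$ on the hexagon balance, the quadratic lower bound $\sin^2 y\ge y^2/2$ on $[-\pi/4,\pi/4]$, and then taking expectations; the arithmetic $4\cdot\frac12\cdot\frac{\xi^2}{4}\,\bbE[\Gamma(P_n)^2]=\frac{\xi^2 u_n^2}{2}$ checks out. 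You are also right to flag the interpretive point that $\Gamma(P_n)$ involves $f_{n-2},f_{n-1},f_n$, so the hypothesis must really control $\max_{k\in\{n-2,n-1,n\}}\|f_k\|_\infty$; in the only place the lemma is invoked (the proof of Theorem~\ref{ThmEssSup}, where $\|f_n\|_\infty=O(n^{-\beta})$ and one restricts to $n>N_0$) this is harmless, since one may simply replace $N_0$ by $N_0+2$, and this does not affect the asymptotics \eqref{N0Bound}.
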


\begin{proof}[Proof of Theorem \ref{ThmEssSup}]
Let us fix some $r<\frac{1}{1-2\be}$, and take some $r<r_0<\frac{1}{1-2\be}$.
We claim that there are constants $c,C>0$ so that for all $N$ large enough we have
$$ \left|\Phi_N(\xi)\right|\leq \exp\left(-c \xi^2 V_N\right)  \text{ for } |\xi|\leq C\sigma_N^{r_0-1}. $$
This is enough for the Edgeworth expansion of order $r$ to hold by Corollary \ref{EdgeProp1}.

In order to prove the claim,
 let  $ N_0=N_0(N)$ be the smallest positive integer such that
$\DS \sigma_N^{r_0-1} \|f_n\|_\infty\leq \delta$ for all $n>N_0$
where $\delta$ is the number from Lemma \ref{ThLemma}.
Then, since $\|f_n\|=O(n^{-\beta})$ 
\begin{equation}
\label{N0Bound}
 N_0= O\left(\sig_N^{\frac{r_0-1}{\beta}}\right)=
O\left( V_N^{\frac{r_0-1}{2\beta}}\right).
\end{equation}
 Let us show now that $N_0=o(N)$, which in particular yields that $N_0<N/2$ if $N$ is large enough.
The assumption that $\|f_n\|_\infty=O(n^{-\be})$ also implies that $u_n^2=O(n^{-2\be})$ and so by \eqref{U N V N relation},
 \begin{equation}
 \label{VN1-}
 V_N=O(N^{1-2\be}).
\end{equation} 
 Combining this with $r_0<\frac{1}{1-2\be}$ we see that $\sig_N^{\frac{r_0-1}{\beta}}=O(N^\ka)$, 
 where 
 \begin{equation}
\label{KappaES} 
 \kappa=\frac{(r_0-1)}{2\beta} (1-2\beta)=
 1-\frac{1-r_0(1-2\beta)}{2\beta}<1.
\end{equation} 
    Therefore, $N_0=O(N^\ka)$.

Next, let us write
$$
\sum_{n=N_0+1}^{N}u_n^2=\sum_{k=0}^{3}\,\sum_{\substack{N_0<n\leq N,\\n\text{ mod }4=k}}u_n^2:=\sum_{k=0}^{3}U_{N_0,N,k}.
$$
Let $k_N$ be so that $U_{N_0,N,k_N}=\max\{U_{N_0,N,k}:\,0\leq k\leq 3\}$. Then by  \eqref{U N V N relation} there are constants $C,D>0$ so that
\begin{equation}
\label{VarReminder}
V(S_N-S_{N_0})\leq CU_{N_0,N,k_N}+D.
\end{equation}
 Combining \eqref{u n rel}, Lemma \ref{ThLemma}, and \eqref{VarReminder}
we see that the characteristic function of $S_N$ satisfies
\begin{equation}
\label{PhiDeltaV}
 \left|\Phi_N(\xi)\right|\leq \exp\left(-c \xi^2 V(S_N-S_{N_0})\right) \text{ for } |\xi|\leq C\sigma_N^{r_0-1}
\end{equation} 
where $C>0$ is some constant which depends on $\be$, $r_0$, and $\ve_0$ but not on $\xi$ or $N$.
Note that by Lemma \ref{MixLemma} we have
$$ V_N=V_{N_0}+V(S_{N}-S_{N_0})+2\text{Cov}(S_{N_0}, S_{N}-S_{N_0})=
V_{N_0}+V(S_{N}-S_{N_0})+O(1). $$
It follows that
$$ V(S_{N}-S_{N_0})=V_N-V_{N_0}+O(1). $$
On the other hand, by \eqref{VN1-},
$$ V_{N_0} \leq N_0^{1-2\be}
\leq C' V_N^{\kappa} $$
where $\kappa$ is given by \eqref{KappaES}.
 Therefore $\DS V(S_{N}-S_{N_0})=V_N+O\left(V_N^\kappa\right).$
Combining this with \eqref{PhiDeltaV} gives
$$ \left|\Phi_N(\xi)\right|\leq \exp\left(-c \xi^2 (V_N+O(V_N^\kappa))\right)  \text{ for } |\xi|\leq C\sigma_N^{r_0-1}$$
and the claim follows since $\ka<1$.
\end{proof}

\subsection{Optimality.}
\begin{proof}[Proof of Theorem \ref{EgThm1}]
Fix some $0<\be<1/2$, and take an integer $s>\frac{1}{1-2\be}$. Then 
$$
s_\be:=(s-1)\left(\frac12-\be\right)>\be.
$$
Take 
$c \in(\be, s_\be)$. 
Set $q_n=2^{[c\log_2 n]}$ and $p_n=[n^{-\beta} q_n]$. Let 
$$
a_n=\frac{p_n}{q_n}.
$$
Since $c>\be$ we have 
$$
n^{-\be}(1+o(1))=n^{-\be}-2^{-[c\log_2 n]}\leq a_n\leq n^{-\be}.
$$
Let $Y_n$ be an iid sequence so that $P(Y_n=\pm 1)=\frac12$. Set 
$$
X_n=a_nY_n=\frac{p_n}{q_n}Y_n.
$$
Then, $\bbE[X_n]=0$, $|X_n|=a_n\asymp n^{-\be}$ and $V(X_n)=a_n^2\asymp n^{-2\be}$.
Next, since $q_n$ divides $q_N$ if $n\leq N$ we have
$$
q_N S_N=S_N2^{[c\log_2 N]}\in\bbZ
$$
and so the minimal jump of $S_N$ is at least $\frac{1}{q_N}$. Therefore, if $\alpha_N$ is a possible value of $S_N$ then 
$$
\bbP\big(S_N\in(\al_N,\al_N+\frac12 2^{-[c\log_2 N]}]\big)=0.
$$
On the other hand, if $S_N$ obeyed an expansion of order $s$
then, choosing  $\al_N=O(\sig_N)$ and denoting
  $\eps_N=2^{-[c\log_2 N]}\sig_N^{-1}$, we would get
$$
0=\bbP\big(S_N\in(\al_N,\al_N+\frac12 2^{-[c\log_2 N]}]\big)=\bbP\big(S_N/\sig_N\in (\al_N/\sig_N,\al_N/\sig_N+\eps_N]\big)
$$
$$
=\bbP\big(S_N/\sig_N\leq \al_N/\sig_N+\eps_N\big)-\bbP\big(S_N/\sig_N\leq \al_N/\sigma_N\big)
$$
$$
=\Phi(\al_N/\sig_N+\eps_N)-\Phi(\al_N/\sig_N)$$
$$+\frac1{\sqrt 2\pi}
\sum_{j=1}^s\left(P_{j,N}(\al_N/\sig_N+\eps_N)e^{-\frac12(\al_N/\sig_N+\eps_N)^2}-P_{j,N}(\al_N/\sig_N)e^{-\frac12\al_N^2\sig_N^{-2}}\right)\sig_N^{-j}
$$
$$
+o(\sig_N^{-s})\geq C\eps_N+o(\sig_N^{-s})\geq C'2^{-c\log_2 N}\sig_N^{-1}+o(\sig_N^{-s}).
$$
Since $\sig_N^2$ if of order $\DS \sum_{n=1}^{N}n^{-2\beta}\asymp N^{1-2\be}$ we must have
$$
c>\frac{(s-1)(1-2\beta)}{2}=s_\beta
$$  
which contradicts that $c\in(\be,s_\beta)$. Taking $s=s(\beta)$ to be the smallest integer such that $s>\frac1{1-2\beta}$ we see that the expansions of orders $r>\frac1{1-2\beta}$ do not hold.
\end{proof}

\section{High order expansions for H\"older continuous functions on Riemannian manifolds.} 
\label{ScRM}

\subsection{Distribution of H\"older functions.}
The following estimate plays an important role in the proof of Theorem \ref{EdgThmHold}.

\begin{lemma}
\label{LmLipInside}
For every Riemanian manifold $\cX$ there is a constant $\fc$ such that for each  real-valued function $\varphi$ on $\cX$ with $\|\varphi\|_{\alpha}\leq 1$
and each $t, \eps$
$$ \nu(\varphi\in [t, t+\eps])\geq  \fc \eps^{1/\al} \min(\nu(\varphi\geq t+\eps), \mu(\varphi\leq t)) $$
where $\nu$ is the normalized
Riemannian volume on $\cX.$
\end{lemma}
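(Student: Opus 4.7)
My plan is to reduce the lemma to a Cheeger-type isoperimetric inequality on the compact Riemannian manifold $\cX$. Set $r=\eps^{1/\al}$ and write $A=\{\varphi\leq t\}$, $B=\{\varphi\geq t+\eps\}$, $C=\{\varphi\in[t,t+\eps]\}$, and let $A_r=\{x\in\cX:\mathrm{dist}(x,A)\leq r\}$ be the closed $r$-neighborhood of $A$. The first and easy step is a containment coming from the H\"older condition: for $x\in A_r\setminus A$ one picks $x'\in A$ with $\mathrm{dist}(x,x')\leq r$, and then $\|\varphi\|_\al\leq 1$ gives $t<\varphi(x)\leq \varphi(x')+\mathrm{dist}(x,x')^\al\leq t+\eps$, so that $A_r\setminus A\subset C$ and consequently $\nu(C)\geq \nu(A_r)-\nu(A)$.

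The core of the proof is to bound $\nu(A_r)-\nu(A)$ below in terms of $\min(\nu(A),1-\nu(A))$ using the geometry of $\cX$. The classical Cheeger inequality $|\partial E|_{n-1}\geq h_\cX\min(\nu(E),1-\nu(E))$ combined with the coarea formula for the $1$-Lipschitz distance function $x\mapsto\mathrm{dist}(x,A)$ produces constants $c_\cX>0$ and $r_0>0$ depending only on $\cX$ such that
$$
\nu(A_r)-\nu(A)\ \geq\ c_\cX\,r\,\min\bigl(\nu(A),1-\nu(A)\bigr)\qquad\text{for all }0<r\leq r_0
$$
and every measurable $A\subset\cX$. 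Since the lemma is vacuous once $\eps$ exceeds $2\sup|\varphi|$ (in which case $A$ or $B$ is empty), only a bounded range of $r$ is relevant; for $r\in[r_0,2^{1/\al}]$ the monotonicity $A_r\supset A_{r_0}$ preserves the previous bound with an adjusted constant.

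To finish, $A^c\supset B$ gives $1-\nu(A)\geq \nu(B)$, and a case split on whether $\nu(A)\leq 1/2$ shows $\min(\nu(A),1-\nu(A))\geq \min(\nu(A),\nu(B))$; chaining the three inequalities produces $\nu(C)\geq \fc\,\eps^{1/\al}\min(\nu(A),\nu(B))$ for a constant $\fc$ depending only on $\cX$. The main obstacle is the uniform Cheeger-type bound used above: integrating the differential inequality $m'(r)\geq h_\cX\min(m(r),1-m(r))$ for $m(r)=\nu(A_r)$ is elementary while $m$ stays below $1/2$, but needs a split at the crossing $m=1/2$ when it occurs inside $(0,r_0]$; extending from the small-$r$ regime to the full bounded range of relevant $r$ is then routine via monotonicity.
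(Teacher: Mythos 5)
Your argument is correct, and it rests on the same engine as the paper's proof---Cheeger's isoperimetric inequality on the compact manifold $\cX$---but realizes it through a different geometric construction. The paper discretizes: it covers $\cX$ by coordinate cubes of diameter comparable to $\eps^{1/\al}$ with bounded multiplicity $\fK$, takes $A$ to be the union of cubes on which $\varphi\geq t+\eps/2$ pointwise, bounds $\Area(\partial A)$ from below by $\frac{\fh}{\fK}\min(\nu(A),\nu(A^c))$ and from above by a constant times $\eps^{-1/\al}\nu(\cS)$ where $\cS$ is the union of cubes touching $\partial A$ (on which the H\"older bound forces $\varphi\in[t,t+\eps]$), and then solves for $\nu(\cS)$. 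You instead work with the honest sublevel set $A=\{\varphi\leq t\}$, take its metric $r$-tube with $r=\eps^{1/\al}$, observe that the H\"older bound places the shell $A_r\setminus A$ inside $\{\varphi\in[t,t+\eps]\}$, and lower-bound $\nu(A_r)-\nu(A)$ by combining the coarea formula for $\mathrm{dist}(\cdot,A)$ with Cheeger to get $m'(s)\geq h_\cX\min(m(s),1-m(s))$. Your route trades the combinatorics of the bounded-multiplicity cover (so $\fK$ never appears) for a bit of geometric-measure-theoretic care about the perimeters of the tubes $A_s$. Both proofs must restrict $\eps$ to a bounded range using $\|\varphi\|_\infty\leq 1$, and you do so explicitly. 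One point worth spelling out when you write this up: integrating your differential inequality directly yields $\nu(A_r)-\nu(A)\geq h_\cX\,r\,\min\bigl(\nu(A),1-\nu(A_r)\bigr)$, with $1-\nu(A_r)$ rather than $1-\nu(A)$ on the right; passing from the former to the latter is exactly the crossing-time split you mention (if $1-\nu(A_r)<\tfrac12(1-\nu(A))$ then the increment $\nu(A_r)-\nu(A)$ already exceeds $\tfrac12(1-\nu(A))$, which dominates $c\,r\,\min(\nu(A),1-\nu(A))$ on the bounded range of $r$; otherwise the two minima differ by at most a factor of two). So the proof is sound; only this small bookkeeping step should be made explicit.
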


\begin{proof}
Since $\cX$ is compact, it can be covered by a finite number of coordinate charts.
Hence for any given $\ve'$ we can cover $\cX$ by the $C^r$ images of coordinate cubes 
of size $\ve'$
so that the multiplicity of the cover is bounded by a constant $\fK$ which is independent of $\ve'$.

Now, let $\ve'=\delta \eps^{1/\al}$ where $\delta$ is so small that the diameter of each partition element
is smaller than $\eps^{1/\al}/2.$ Consider the cover  of $\cX$ described above and
let $A$ be the union of all cover elements $Q$ such that $\varphi(x)\geq t+\frac{\eps}{2}$
for each $x\in Q$ and $\cS$ be the union of all partition elements which intersect $\partial A.$ 
By the  Isoperimetric Inequality,
$$\Area(\partial A)\geq \frac{\fh}{\fK} \min(\nu(A), \nu(A^c))\geq 
\frac{\fh}{ \fK} \min(\nu(\varphi\geq t+\eps), \nu(\varphi\leq t)) $$
where $\fh$ is the Cheeger constant of $\cX.$ On the other hand, there exists a constant $\ka$ which does not depend on $\ve$ or $\al$ so that
$$ \Area(\partial A)\leq \Area(\partial\cS)\leq {\fK}  \kappa \eps^{1/\al} \nu(\cS) $$
since for each  cover element $Q\subset \cS$ we have
$$ \Area(\partial S\bigcap \partial Q)\leq 
\kappa \eps^{1/\al} \nu(Q). $$
Since $\varphi\in [t, t+\eps]$ on $\cS$ the result follows.
\end{proof}

\subsection{Proof of Theorem \ref{EdgThmHold}.}
For the rest of Section \ref{ScRM} we consider the following setting.
Let $\{X_n\}$ evolve on a compact Riemannian manifold $M$ with transition densities $p_n(x,y)$ bounded and bounded away from $0$. Let us assume that
$f_n: M\times M\to \bbR$ satisfy $\|f_n\|_{\al}:=\max(\sup|f_n|,v_\al(f_n))\leq 1$ for some $0<\al\leq1$.
Denote $\Phi_N(\xi)=\bbE(e^{\xi S_N}).$

\begin{proposition} 
\label{LmEdgeCont}
For all $0<\al\leq 1$ and  $\delta>0$ there exists  $C_1(\al, \del), c_1=c_1(\al,\del)>0$ so that for every $n\in\bbN$ and $\xi\in\bbR$ with $|\xi|\geq \delta$ we have
$$ |\Phi_N(\xi)|\leq  C_1 \exp \left(-c_1V_N|\xi|^{1-\frac{1}\al}\right).$$
\end{proposition}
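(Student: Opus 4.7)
By Lemma \ref{Lm2.2} we have $|\Phi_N(\xi)|\leq C\exp(-cD_N(\xi))$, and by Theorem \ref{Thm2.1}, $V_N\leq C\sum_{n}u_n^2+C'$. Hence the proposition will follow from the pointwise estimate
\[
d_n^2(\xi)\geq c_0(\al,\delta)\,u_n^2\,|\xi|^{1-1/\al},\qquad |\xi|\geq\delta.
\]
To prove this I would condition on the ``anchor" variables $\vec A=(\mathscr X_{n-2},\mathscr X_{n-1},\mathscr Y_n,\mathscr Y_{n+1})$ of the hexagon. Under the hypothesis of the theorem that the transition densities are bounded above and below, the bridge points $\mathscr X_n,\mathscr Y_{n-1}$ are conditionally independent given $\vec A$, with densities (w.r.t.\ the Riemannian volume) bounded above and below by positive constants uniform in $\vec A$. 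Writing
\[
\Gamma(P_n)=\varphi_1(\mathscr X_n)-\varphi_2(\mathscr Y_{n-1})+C(\vec A),
\]
with $\varphi_1(x)=f_{n-1}(\mathscr X_{n-1},x)+f_n(x,\mathscr Y_{n+1})$ and similarly $\varphi_2$ (both $\al$-H\"older with $\|\cdot\|_\al\leq 2$), and setting $\psi_i(\xi)=\bbE[e^{i\xi\varphi_i}\mid\vec A]$, the elementary inequality $1-ab\geq \tfrac12((1-a^2)+(1-b^2))$ for $a,b\in[0,1]$, together with conditional independence, yields
\[
4\,\bbE[\sin^2(\xi\Gamma(P_n)/2)\mid\vec A]\;\geq\;(1-|\psi_1|^2)+(1-|\psi_2|^2).
\]

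The heart of the proof is the following key estimate: for an $\al$-H\"older function $\varphi$ on $M$ with $\|\varphi\|_\al\leq K$ and a random variable $Z$ whose density with respect to Riemannian volume is bounded above and below by positive constants,
\[
1-|\bbE(e^{i\xi\varphi(Z)})|^2\;\geq\;c\,\mathrm{Var}(\varphi(Z))\,|\xi|^{1-1/\al}\qquad(|\xi|\geq\delta).
\]
Rewriting the LHS as $2\,\bbE[\sin^2(\xi(\varphi(Z)-\varphi(Z'))/2)]$ for an independent copy $Z'$, the set $\{t:\sin^2(\xi t/2)\geq 1/4\}$ is a disjoint union of intervals $J_k$ of length $\asymp |\xi|^{-1}$ spaced at distance $\asymp |\xi|^{-1}$. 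For each fixed $z'$, Lemma \ref{LmLipInside} applied to $\varphi/K$ gives
\[
\bbP\bigl(\varphi(Z)\in J_k+\varphi(z')\bigr)\;\gtrsim\;|\xi|^{-1/\al}\,\min\bigl(F(J_k^{\mathrm{bot}}+\varphi(z')),\,1-F(J_k^{\mathrm{top}}+\varphi(z'))\bigr),
\]
where $F$ is the CDF of $\varphi(Z)$. Summing over $k$ (a Riemann sum with step $\asymp |\xi|^{-1}$) produces a factor $\asymp|\xi|$ times $\int\min(F(t),1-F(t))\,dt$, and a direct Fubini computation shows the latter integral equals $\bbE|\varphi(Z)-m_\varphi|$ (the mean absolute deviation from the median $m_\varphi$). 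Finally, since $|\varphi|\leq K$, one has $\mathrm{Var}(\varphi)\leq \bbE[(\varphi-m_\varphi)^2]\leq 2K\,\bbE|\varphi-m_\varphi|$, so $\bbE|\varphi-m_\varphi|\geq \mathrm{Var}(\varphi)/(2K)$, which completes the estimate.

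Combining the key estimate with the conditional variance identity $\mathrm{Var}(\Gamma(P_n)\mid\vec A)=\mathrm{Var}(\varphi_1\mid\vec A)+\mathrm{Var}(\varphi_2\mid\vec A)$ and taking expectation over $\vec A$ gives $d_n^2(\xi)\geq c\,\bbE[\mathrm{Var}(\Gamma\mid\vec A)]\,|\xi|^{1-1/\al}$. The remaining step is to show $\bbE[\mathrm{Var}(\Gamma\mid\vec A)]\gtrsim u_n^2$; should $\mathrm{Var}(\bbE[\Gamma\mid\vec A])$ absorb too much of $u_n^2$, one may repeat the argument with a complementary conditioning, e.g.\ on $(\mathscr X_{n-2},\mathscr X_n,\mathscr Y_{n-1},\mathscr Y_{n+1})$, making $\mathscr X_{n-1},\mathscr Y_n$ the bridge variables, and combine the resulting lower bounds. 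I expect the principal obstacle to be the key estimate itself---specifically performing the Riemann-sum aggregation over the infinitely many intervals $J_k$ cleanly (the raw bound from Lemma \ref{LmLipInside} involves $\min(F,1-F)$ which must be matched against a Riemann sum), and then recovering $\mathrm{Var}(\varphi(Z))$ via the mean-absolute-deviation inequality.
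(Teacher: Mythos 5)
Your plan correctly reduces the proposition, via Lemma \ref{Lm2.2}, the equivalence $u_n^2\asymp\int\Gamma^2\,d\mu^6$, $d_n^2(\xi)\asymp\int\sin^2(\xi\Gamma/2)\,d\mu^6$ (from uniform ellipticity), and Theorem \ref{Thm2.1}, to a pointwise lower bound $d_n^2(\xi)\gtrsim u_n^2\,|\xi|^{1-1/\al}$, and your ``key estimate'' $1-|\bbE e^{i\xi\varphi(Z)}|^2\geq c\,\mathrm{Var}(\varphi(Z))\,|\xi|^{1-1/\al}$ is, after the identity $1-|\bbE e^{i\xi\varphi}|^2=2\bbE\sin^2(\xi(\varphi(Z)-\varphi(Z'))/2)$, precisely the paper's Lemma \ref{LmLipOsc}. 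But the way you set up the conditioning creates a genuine gap that the paper's argument is engineered to avoid.

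You condition on $\vec A=(\mathscr X_{n-2},\mathscr X_{n-1},\mathscr Y_n,\mathscr Y_{n+1})$, writing $\Gamma=\varphi_1(\mathscr X_n)-\varphi_2(\mathscr Y_{n-1})+C(\vec A)$ with $\varphi_1\neq\varphi_2$, and ultimately need $\bbE[\mathrm{Var}(\Gamma\mid\vec A)]\gtrsim u_n^2$. By the law of total variance this fails exactly when $\mathrm{Var}(\bbE[\Gamma\mid\vec A])$ is a large fraction of $u_n^2$, and you only gesture at how to handle that (``repeat with the complementary conditioning'') without verifying it; the total-variance identity does not decompose additively across two complementary conditionings, so the remedy is not automatic. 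The paper sidesteps the problem entirely by conditioning only on the \emph{spine} $(\mathscr X_{n-2},\mathscr Y_{n+1})$: then $\Gamma=\phi(\mathscr X_{n-1},\mathscr X_n)-\phi(\mathscr Y_{n-1},\mathscr Y_n)$ with \emph{the same} function $\phi=\phi_{x_{n-2},y_{n+1}}$ applied to two conditionally i.i.d.\ copies of a point in $M\times M$, so that $\bbE[\Gamma\mid\mathscr X_{n-2},\mathscr Y_{n+1}]\equiv 0$ and $\bbE[\mathrm{Var}(\Gamma\mid\mathscr X_{n-2},\mathscr Y_{n+1})]=u_n^2$ with no slack. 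That symmetric grouping (apply Lemma \ref{LmLipOsc} on $\cX=M\times M$, then integrate over the spine) is the missing ingredient; with it, the decomposition into $\psi_1,\psi_2$ and the inequality $1-ab\ge\tfrac12((1-a^2)+(1-b^2))$ become unnecessary.

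There is a second, more technical issue in your sketch of the key estimate. The Riemann-sum step produces the claimed factor $|\xi|\int\min(F,1-F)\,dt=\tfrac{|\xi|}{2}\bbE|\varphi-m_\varphi|$ only up to an additive error $O(1)$ (the mass lying in one mesh cell near the median is missed), which after multiplying by $|\xi|^{-1/\al}$ is of order $|\xi|^{-1/\al}$ and is \emph{not} dominated by $\mathrm{Var}(\varphi)\,|\xi|^{1-1/\al}$ when $\mathrm{Var}(\varphi)\lesssim|\xi|^{-1}$. The paper's proof of Lemma \ref{LmLipOsc} handles this by a case split: when the typical oscillation $\Delta$ is below $\eps/8=1/(8|\xi|)$ one uses $\sin^2(\xi\Delta/2)\gtrsim\xi^2\Delta^2$ (giving the even stronger bound $\xi^2\,\fu^2$, which dominates $|\xi|^{1-1/\al}\fu^2$ since $|\xi|\ge\delta$), and only when the mass sits at scales above $\eps$ does one do the interval-counting, using a dyadic choice of scale $\fl$ and the ``$\Omega_j',\Omega_j''$'' trick to avoid unlucky alignment. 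Your sketch would need such a dichotomy before the Riemann-sum argument can be made rigorous.
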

 Theorem \ref{EdgThmHold} follows by Proposition \ref{LmEdgeCont} together with Corollary \ref{EdgeProp1}.
 \\

The main step in the proof of Proposition \ref{LmEdgeCont} is the following.

\begin{lemma}
\label{LmLipOsc}
For every Riemanian manifold $\cX$ for every $\delta>0$
there is a constant $\hc$ such that for each  real-valued function 
$\varphi$ on $\cX$ with $\|\varphi\|_{\alpha}\leq 1$
and each $\xi$ such that $|\xi|\geq \delta$, 
$$ \iint \sin^2\left(\frac{\xi[\varphi(x_1)-\varphi(x_2)]}{2} \right) \nu(x_1) d\nu(x_2)\leq 
\hc |\xi|^{1-(1/\alpha)} \iint \left[\varphi(x_1)-\varphi(x_2)\right]^2 \nu(x_1) d\nu(x_2). $$
where $\nu$ is the normalized
Riemannian volume on $\cX.$
\end{lemma}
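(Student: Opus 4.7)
The plan is to establish the stated upper bound on $\iint\sin^2(\xi F/2)\,d\nu^2$ in terms of $\iint F^2\,d\nu^2$, where $F(x_1,x_2)=\varphi(x_1)-\varphi(x_2)$, by a scale-based decomposition combined with the anti-concentration of the pushforward measure provided by Lemma~\ref{LmLipInside}.

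The first step is to reduce to a one-dimensional estimate on the pushforward $\mu=\varphi_*\nu$. Both sides become
\begin{equation*}
\iint\sin^2\!\left(\tfrac{\xi(s-t)}{2}\right)d\mu(s)\,d\mu(t)\quad\text{and}\quad \iint(s-t)^2\,d\mu(s)\,d\mu(t)=2\,\mathrm{Var}(\mu),
\end{equation*}
and the anti-concentration of Lemma~\ref{LmLipInside} becomes the statement that $\mu([t,t+\epsilon])\geq \fc\,\epsilon^{1/\alpha}\min\bigl(\mu([t+\epsilon,\infty)),\mu((-\infty,t])\bigr)$, which encodes that $\mu$ cannot be unduly concentrated at scale $\epsilon$ and that the natural scale for the Hölder exponent $\alpha$ is $\epsilon=|\xi|^{-1/\alpha}$.

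The main step is a dyadic decomposition of the integration domain at the threshold $r=|\xi|^{-1/\alpha}$. Using $\sin^2(u)\leq\min(u^2,1)$, the near-diagonal contribution $\{|s-t|\leq r\}$ is bounded by $\tfrac{\xi^2}{4}\iint_{|s-t|\leq r}(s-t)^2\,d\mu^2\leq \tfrac{\xi^2 r^2}{4}\,\mu^2(|s-t|\leq r)\leq \tfrac{1}{4}|\xi|^{2-2/\alpha}$, while the far-diagonal contribution $\{|s-t|>r\}$ is bounded by $\mu^2(\{|s-t|>r\})$. The role of Lemma~\ref{LmLipInside} is then to convert these bounds into a multiple of $|\xi|^{1-1/\alpha}\mathrm{Var}(\mu)$: iterating the anti-concentration across dyadic scales above $r=|\xi|^{-1/\alpha}$ up to the diameter of $\varphi(\cX)$ yields a lower bound of the form $\mathrm{Var}(\mu)\gtrsim r\cdot \mu^2(\{|s-t|>r\}) = |\xi|^{-1/\alpha}\mu^2(\{|s-t|>r\})$, so that $\mu^2(\{|s-t|>r\})\lesssim |\xi|^{1/\alpha}\mathrm{Var}(\mu)$; combined with a matching bound on the near-diagonal term, one obtains the factor $|\xi|^{1-1/\alpha}$ as $|\xi|\cdot r=|\xi|^{1-1/\alpha}$.

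The step I expect to be the main obstacle is the far-diagonal bound, because Lemma~\ref{LmLipInside} is phrased as a lower bound on $\mu([t,t+\epsilon])$ rather than an upper bound, so extracting a useful upper bound on $\mu^2(\{|s-t|>r\})$ requires chaining the anti-concentration across scales and tracking constants carefully to avoid losing the exponent $1/\alpha$. A subtle point is that the bound must hold uniformly over the entire range $|\xi|\geq\delta$ (with $\hc$ depending only on $\delta,\alpha,\cX$), which forces the dyadic chaining to terminate cleanly at the intrinsic length scale $\mathrm{diam}\,\varphi(\cX)\leq \mathrm{diam}(\cX)^\alpha$ rather than a $|\xi|$-dependent cutoff; matching the scaling on both contributions so that the near-diagonal bound also contributes at most $\hc|\xi|^{1-1/\alpha}\mathrm{Var}(\mu)$ is where the Hölder exponent $\alpha$ enters sharply through Lemma~\ref{LmLipInside}.
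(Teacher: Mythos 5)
The fundamental problem is that you are proving the inequality in the wrong direction. As printed the statement reads ``$\leq$'', but this is a typo in the paper: the left-hand side is (up to ellipticity constants) $d_n^2(\xi)$, and the application in Proposition \ref{LmEdgeCont} requires the \emph{lower} bound $d_n^2(\xi)\geq C|\xi|^{1-1/\al}u_n^2$ in order to deduce $|\Phi_N(\xi)|\leq C\exp(-cV_N|\xi|^{1-1/\al})$ from \eqref{u n rel}; accordingly, the paper's own proof establishes $\fd^2(\xi)\geq \hc|\xi|^{1-1/\al}\fu^2$. The upper bound you set out to prove is false: writing $\mu=\varphi_*\nu$, the left-hand side equals $\tfrac12\bigl(1-|\hat\mu(\xi)|^2\bigr)$, which tends to $\tfrac12$ as $|\xi|\to\infty$ whenever $\mu$ has an integrable density (e.g.\ $\varphi$ a nonconstant smooth function), while for $\al<1$ the right-hand side tends to $0$ because $1-1/\al<0$. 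No refinement of the dyadic chaining can rescue this. Moreover, even on its own terms the proposal does not close: your far-diagonal estimate gives $(\mu\times\mu)(|s-t|>r)\lesssim|\xi|^{1/\al}\mathrm{Var}(\mu)$ with $r=|\xi|^{-1/\al}$, and $|\xi|^{1/\al}$ exceeds the target factor $|\xi|^{1-1/\al}$ by a full power of $|\xi|$; likewise the near-diagonal bound $\tfrac14|\xi|^{2-2/\al}$ is not dominated by $|\xi|^{1-1/\al}\mathrm{Var}(\mu)$ when $\mathrm{Var}(\mu)$ is small.

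For the record, the correct argument runs the decomposition the other way. Set $\eps=|\xi|^{-1}$ (note: the threshold lives at scale $\xi^{-1}$ in the range of $\varphi$, not $\xi^{-1/\al}$) and split according to whether $\iint\Delta^2$ is dominated by $\{\Delta\leq\eps/8\}$ or by its complement. In the first case $\sin^2u\geq c^2u^2$ for $|u|\leq 1/8$ yields $\fd^2(\xi)\geq c'\xi^2\fu^2\geq c''|\xi|^{1-1/\al}\fu^2$. In the second case one selects the dominant dyadic scale $\fl=2^{k^*}\eps$, so that $\fu^2\lesssim\fl\,(\nu\times\nu)(\Delta\in[\fl,2\fl))$, and then uses Lemma \ref{LmLipInside} not as pure anti-concentration but to produce, for each of the $\asymp\fl/\eps$ levels between the median and $\fl/2$, a pair of preimage slabs of $\nu$-measure at least $c\eps^{1/\al}$ times the relevant tail mass on one of which $\sin^2(\xi\Delta(x_1,\cdot)/2)$ is bounded below uniformly in $x_1$; summing over levels gives $\fd^2(\xi)\gtrsim\eps^{1/\al-1}\fl\,(\nu\times\nu)(\Delta\geq\fl)\gtrsim|\xi|^{1-1/\al}\fu^2$. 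Your reduction to the pushforward measure and the dyadic organization of scales are compatible with this scheme, but the inequality must be reoriented throughout.
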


The lemma will be proven in \S \ref{SSHolOsc}, here we complete the proof of the proposition based on the lemma.

Let $\mu$ denote the normalized
Riemannian volume on $M.$
Let us fix some $n\in\bbN$ and consider a random hexagon
$P_n=(x_{n-2},x_{n-1},x_n; y_{n-1},y_n, y_{n+1})$
based at $n.$

Recall \eqref{u n def} and \eqref{d n def}. By uniform ellipticity we have
\begin{equation}
\label{VUNHolder} 
 u_n^2\asymp \int \Gamma^2(P_n) d\mu^6 (P_n) , \quad
d_n^2(\xi) \asymp \int \sin^2 \left(\frac{\xi\Gamma (P_n)}{2}\right) d\mu^6(P_n) . 
\end{equation}
where
$$\Gamma(P_n)=f_{n-2}(x_{n-2},x_{n-1})+f_{n-1}(x_{n-1},x_n)+f_n(x_n,y_{n+1})$$
$$-f_{n-2}(x_{n-2},y_{n-1})-f_{n-1}(y_{n-1},y_n)-f_n(y_n,y_{n+1})$$
is the balance of $P_n.$

Applying Lemma \ref{LmLipOsc} with $\cX=M\times M$ and 
$$\phi_{x_{n-2}, y_{n+1}}(x_{n-1}, x_n)=
f_{n-2}(x_{n-2},x_{n-1})+f_{n-1}(x_{n-1},x_n)+f_n(x_n,y_{n+1})$$
and integrating with respect to $x_{n-2}$ and $y_{n+1}$ 
we obtain $d_n^2(\xi)\geq C \xi^{1-(1/\alpha)} u_n^2 $. 

Now  Proposition \ref{LmEdgeCont} follows from \eqref{u n rel} 
\qed

\subsection{The proof of Lemma \ref{LmLipOsc}}
\label{SSHolOsc}
Set
$$ \Delta (x_1, x_2)=|\varphi(x_1)-\varphi_n(x_2)|, \quad \eps=\xi^{-1}, \quad
\fu^2=\iint \Delta^2(x_1, x_2)\nu(x_1) d\nu(x_2), $$
$$ \fd^2(\xi)=\iint \sin^2\left(\frac{\Delta(x_1, x_2)}{2\eps} \right) \nu(x_1) d\nu(x_2).
$$
Decompose $\cX\times \cX=\cA_1\cup \cA_2$  where 
$\DS \cA_1=\left\{(x_1, x_2): \Delta \leq \frac{\eps}{8} \right\}$ and $\cA_2$ is its complement.
We split
the proof of Lemma \ref{LmLipOsc} into two cases.
\\

{\sc Case 1.} If the integral of $\Delta^2$ 
over $\cA_1$ is larger than the integral over $\cA_2$ 
then  using 
that 
$\left|\frac{\sin t}{t}\right|\geq c$ for $|t|\leq 1/8$ we get  
$$ \fd^2(\xi)\geq  \iint_{\cA_1} \sin^2 \frac{\Delta(x_1, x_2)}{2\eps } d\nu(x_1)d\nu(x_2)\geq 
\frac{c^2 \xi^2}{4} \fu^2 . $$

{\sc Case 2.} Now we assume that the integral over $\cA_2$ is larger. 
Let 
$$ l_k=2^k \eps, \quad
k^*=\text{argmax} \left[ l_k (\nu\times\nu)(\Delta \in [l_k, 2 l_k))\right], \quad \fl=l_{k^*}
$$
and
$$
\fv=\fl (\nu\times\nu)(\Delta \in [\fl, 2 \fl)). $$
Note that under the assumptions of Case 2 we have
\begin{equation}
\label{Var-Vol}
 \fu^2 \leq C_0\sum_{k=-3}^{\log_2 (1/\eps)} \sum_k l_k^2 (\nu\times\nu)(\Delta\in [l_k, 2 l_k))
\leq C_0\fv\sum_{k=-3}^{\log_2 (1/\eps)} 
l_k\leq C \fv.
\end{equation}

Next,
let $\fm$ denote  a median of $\varphi$ 
\def\fd{{\mathfrak{d}}}with respect to $\nu$,
$\tilde\varphi=\varphi-\fm$ and 
$$\Omega_1=\{\tilde\varphi\leq \fl /2\}, \quad
\Omega_2=\{\tilde\varphi\in (-\fl/2,  \fl/2)\}, \quad  
\Omega_3=\{\tilde\varphi\geq\fl/2 \}. $$
 Let us assume that $\mu(\Omega_3)\geq \mu(\Omega_1),$ the case where the opposite
 inequality holds being similar.
  Since $\Delta (x_1, x_2)< \fl$ for $(x_1, x_2)\in \Omega_2\times \Omega_2$
we have
$$ (\nu\times\nu)(\Delta \geq \fl)\leq 2\left[ \nu(\Omega_1)+\nu(\Omega_3)\right]\leq 4 \nu(\Omega_3). $$ 
Let
$$ \Omega'_j=\{\tilde\varphi\in [(j+0.1)\eps, (j+0.2) \eps]\}
\quad  
\Omega_j''=\{\tilde\varphi\in [(j+0.3)\eps, (j+0.4) \eps]\}
. $$
Since $\fm$ is a median, 
$\DS \nu(\Omega_1\cup\Omega_2) \geq \frac{1}{2}$. Hence
Lemma \ref{LmLipInside} shows that that for $j\leq \frac{\fl}{4\eps}$ we have 
\begin{equation}
\label{OmegaOmega5}
\nu(\Omega_j')\geq c\eps^{1/\al} \nu(\Omega_3), 
\quad
\nu(\Omega_j'')\geq c\eps^{1/\al} \nu(\Omega_3) .
\end{equation}
On the other hand there is a constant $\delta_0>0$ such that for each $x_1\in X$ we have that
$\DS \sin^2\left(\frac{\Delta(x_1, x_2)}{2\eps}\right)\geq \delta_0 $ 
either for all $j$ and all $x_2\in \Omega_j'$ or for all $j$
for all $x_2\in \Omega_j''$.
It follows that if $\cA_2$ dominates then 
$$\fd^2(\xi)\geq \delta_0
\min\left(\sum_{j=1}^{\fl/4\eps}  \nu(\Omega_j'), \sum_{j=1}^{\fl/4\eps} \nu(\Omega_j'')\right)
\geq \hat{c} \fl \mu(\Omega_3)
=\tilde{c}\eps^{1/\al-1}\fl (\nu\times\nu)(\Delta \in [\fl, 2\fl))=\tilde{c}\eps^{1/\al-1} \fv.$$
Combining this with 
\eqref{Var-Vol} we obtain that if $\cA_2$ dominates then
$\DS \fd^2(\xi)\geq c\eps^{1/\al-1} \fu^2.$
\\

Combining the estimates of cases 1 and 2 we obtain the result. \qed

\subsection{Cantor functions.}
\label{SSCantor}
In order to show the optimality of Theorem \ref{EdgThmHold} we need to consider a function $f$
for which the estimate of Lemma \ref{LmLipOsc} is optimal. Moreover, we want $f$  to grow
on a set of small Hausdorff dimension and we want 
the distribution of $f$ to have atoms at  values which
are commensurable with each other. It turns out that Cantor functions studied 
in \cite{Gil, DMRV} satisfy
these conditions. So in this subsection we describe briefly the construction and properties of
Cantor functions.

Let us fix some integers $p\geq 3$, $k\geq1$ and let $q=(p-1)k$.
Set $$\al_{p,p+q}=\frac{1}{\log_p(q+p)}=\frac{\ln p}{\ln (p+q)}.$$ 

On $[0,1]$, let  $C_{p,p+q}$ 
(where $q=(p-1)k$) be the  Cantor set 
of all numbers of the form $\DS x=\sum_{j=1}^\infty\frac{(k+1)a_j}{(p+q)^j}, a_j=0,1,...,p-1$. 
In other words $C_{p,p+q}$  consists of all number in $[0,1]$ which can be written 
in base $p+q$ so that all its digits are divisible by $k+1.$

Let $f$ be the corresponding Cantor function (\cite{Gil}).
 Namely, for $x\in C_{p,p+q}$ we have
$$
f(x)=\sum_{j}\frac{a_j}{p^j},\quad\text{if}\quad  x=\sum_{j}\frac{(k+1)a_{j}}{(p+q)^{j}},
$$
while outside $C_{p,p+q}$ we have 
$$
f(x)=\sup_{y\in C_{p,p+q},\, y\leq x}f(y)=\sum_{j=1}^n\frac{b_j}{p^j}
\quad \text{where}\quad x=\sum_{j}\frac{x_j}{(p+q)^{j}}, \quad 
b_j=\left[\frac{x_j}{k+1}\right]+1$$ and $n$ is the first index so that $x_n$ is not divisible by $k+1$.
By \cite[Theorem 2]{Gil} (see also \cite{DMRV}), \,$f$ is H\"older continuous with exponent $\al_{p,q}$, which is also the the Hausdorff dimension of $C_{p,q+p}$.
 Note that $f$ is increasing (see \cite[Theorem 1]{Gil}) and that $f(0)=0$ and $f(1)=1$.

\begin{lemma} \label{LmHRCantor}
 For each $n\in \bbN$
\begin{equation}\label{LebPro}
{\mathrm Leb}\{x\in[0,1]: p^nf(x)\not\in\bbZ\}=\left(\frac{p}{p+q}\right)^{n}.
\end{equation}
\end{lemma}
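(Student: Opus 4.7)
The plan is to prove this by induction on $n$, exploiting the self-similar structure of the Cantor set $C_{p,p+q}$ and the functional equation satisfied by $f.$ Let $A_n=\{x\in[0,1]:p^n f(x)\notin\bbZ\}$. The base case $n=0$ is immediate: since $f$ is continuous, monotone increasing with $f(0)=0$ and $f(1)=1$, the set $\{f\in\bbZ\}\cap[0,1]$ equals $\{0,1\}$, which has Lebesgue measure zero. Hence $\text{Leb}(A_0)=1=(p/(p+q))^0.$

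For the inductive step, I will decompose $[0,1]$ according to the first digit $x_1\in\{0,1,\dots,p+q-1\}$ in the base $p+q$ expansion. For each $j\in\{0,1,\dots,p-1\}$, let $I_j=[(k+1)j/(p+q),\,((k+1)j+1)/(p+q)]$ be the kept interval whose points begin with the digit $(k+1)j$, and let $G_j$ for $j\in\{1,\dots,p-1\}$ be the union of the $k$ gap intervals whose first digit lies strictly between $(k+1)(j-1)$ and $(k+1)j.$ Each $I_j$ has length $1/(p+q)$, and the gap intervals together have total length $q/(p+q).$ On each $G_j$, a direct inspection of the definition of $f$ shows that $f\equiv j/p$ is constant, so $p^{n+1}f\in\bbZ$ on $G_j$; thus the $G_j$'s contribute nothing to $A_{n+1}.$

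The main ingredient is the functional equation
\begin{equation*}
f(T_j(y))=\frac{j+f(y)}{p}, \qquad y\in[0,1],\ j=0,\dots,p-1,
\end{equation*}
where $T_j(y)=((k+1)j+y)/(p+q).$ This identity is straightforward for $y\in C_{p,p+q}$ by inspecting the base $p+q$ and base $p$ digit expansions: prepending the digit $(k+1)j$ to the expansion of $y$ prepends the digit $j$ to the expansion of $f(y).$ For $y\notin C_{p,p+q}$, I would extend the identity using the sup-formula defining $f$ together with the fact that $T_j$ is an order-preserving affine map sending $C_{p,p+q}$ into itself. Applying this identity, a point $x=T_j(y)\in I_j$ lies in $A_{n+1}$ if and only if $p^{n+1}f(x)=p^n j+p^n f(y)\notin\bbZ$, equivalently $y\in A_n$; hence $A_{n+1}\cap I_j=T_j(A_n)$ and $\text{Leb}(A_{n+1}\cap I_j)=\text{Leb}(A_n)/(p+q).$

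Summing over the $p$ kept intervals and using that the gap portion contributes zero, we obtain $\text{Leb}(A_{n+1})=(p/(p+q))\,\text{Leb}(A_n)$, which closes the induction. The only subtlety I expect is the extension of the functional equation from $C_{p,p+q}$ to all of $[0,1]$; this will be handled either by the continuity and monotonicity of $f$ (noting that $f\circ T_j$ and $y\mapsto(j+f(y))/p$ are both continuous and increasing, agree on $C_{p,p+q}$, and take the same values at $0$ and $1$), or directly from the sup-formula in the statement of the lemma.
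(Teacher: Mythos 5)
Your proof is correct and arrives at the same recursion $\mathrm{Leb}(A_{n+1})=\frac{p}{p+q}\,\mathrm{Leb}(A_n)$ that underlies the paper's argument, but it packages the recursion more cleanly. The paper traces the explicit step-by-step construction of $f$: after the $n$-th stage of the Cantor construction $f$ has been assigned, on a set of measure $1-(p/(p+q))^n$, values of the form $j/p^m$ with $m\le n$ (so $p^n f\in\bbZ$ there), and on the complementary $p^n$ intervals of length $(p+q)^{-n}$ the values assigned in all later stages are not of the form $s/p^n$; this gives \eqref{LebPro} directly. You instead isolate the self-similarity in the single functional equation $f(T_j(y))=(j+f(y))/p$ and then close an induction, which avoids carrying the bookkeeping of the recursive construction at each level and makes the source of the factor $p/(p+q)$ (namely $p$ affine copies of $[0,1]$, each contracted by $1/(p+q)$) transparent. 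Both are fine; yours is arguably more modular.

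One small caution on the step you flag yourself: to extend $f\circ T_j=(j+f)/p$ from $C_{p,p+q}$ to $[0,1]$, "both sides are continuous, increasing, agree on $C_{p,p+q}$, and agree at $0,1$" is \emph{not} by itself sufficient, since two continuous increasing functions can agree on a nowhere dense set and at the endpoints yet differ on the gaps. What closes the argument is precisely that both sides are \emph{constant on each complementary gap} (for $f\circ T_j$ because $T_j$ sends gaps of $C_{p,p+q}$ to gaps and $f$ is constant on gaps; for $(j+f)/p$ immediately), so agreement on gap endpoints forces agreement on the gaps. Equivalently, the sup-formula you mention as a fallback does the job directly. You should use one of those two justifications rather than the bare continuity/monotonicity claim.
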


\begin{proof}
To prove the lemma we explain the inductive construction of
 $f$  by following the recursive construction of the set $C_{p,q+p}$.
First, we split $[0,1]$ into $p+q$ closed intervals $I_1,I_2,...,I_{p+q}$ of the same length $\frac1{p+q}$ so that $I_s$ is to the left of $I_{s+1}$ for each $s$. Next, define intervals $J_1,J_2,...,J_{2p+1}$ as follows: 
we define $J_{1}=I_1$, and then inductively $J_{2l+1}=I_{s_{l}+k+1}$, if $J_{2l-1}=I_{s_l}$. For $1\leq l<p$ we define
and $J_{2l}$ to be the union of the intervals $I_s$ between $J_{2l-1}$ and $J_{2l+1}$.
On $J_{2l}$ we define $f|_{J_{2l}}=\frac{l}{p}$. 

The reconstruction of the function $f$ now proceeds by induction. Suppose that at the $n$-th step of the construction $f$ was additionally defined on a union of closed intervals $U_1,...,U_{j_n}$, $j_n=(p-1)p^{n-1}$ of length $k(p+q)^{-n}$ so that $f|_{U_j}=jp^{-n}$,  $U_j$ is to the left of $U_{j+1}$, and the gap between $U_j$ and $U_{j+1}$ is $(p+q)^{-n}$, where $U_0=\{0\}$ and $U_{j_n+1}=\{1\}$. Let us split the interval between $U_j$ and $U_{j+1}$ into equal $p+q$ intervals $I_{1,j,n+1},I_{2,j,n+1},...,I_{p+q,j,n+1}$ of length $(p+q)^{-n-1}$
 so that $I_{s,j,n+1}$ is to the left of $I_{s+1,j,n+1}$ for each $s$. In the $(n+1)$-th step the intervals $J_{1,j,n+1},J_{2,j,n+1},...,J_{2p+1,j,n+1}$ are defined as follows: 
we define $J_{1,j,n+1}=I_{1,j,n+1}$, and then inductively $J_{2l+1,j,n+1}=I_{s_{l}+k+1,j,n+1}$, if $J_{2l-1,j,n+1}=I_{s_l,j,n+1}$. For $1\leq l<p$ we define
and $J_{2l,j,n+1}$ to be the union of the intervals $I_{s,j,n+1}$ between $J_{2l-1,j,n+1}$ and $J_{2l+1,j,n+1}$. On $J_{2l,j,n+1}$ we define  
$$f|_{J_{2l,j,n+1}}=\frac{jp+l}{p^{n+1}}=\frac{j}{p^n}+\frac{l}{p^{n+1}}.$$ 
In view of the above recursive construction of $f$, we obtain \eqref{LebPro} since in the $(n+1)$-th step there are $p^n$  intervals of length $(p+q)^{-n}$ on which $f$ has not been defined yet, and the values of $f$ in all the steps proceeding the $n$-th step do not have the form $s/p^n$ for $s\in\bbZ$.
\end{proof}

\subsection{Optimality.} 
\begin{proof}[Proof of Theorem \ref{EgThm2}]
We first observe that it is enough to prove Theorem \ref{EgThm2} for a dense set of numbers $\alpha$ in $(0,1)$. Indeed, if the theorem holds for $\al$ belonging to a dense set $A$, given $\al_0\in(0,1)$ and 
$r>\frac{\al_0+1}{1-\al_0}$,
we can find $\al\in A$ so that $\al>\al_0$ and 
$r> \frac{\al+1}{1-\al}.$ 
Now, the $\al$-H\"older continuous function we get from  Theorem \ref{EgThm2} with this $\al$ is also $\al_0$-H\"older continuous 
so the result follows.

Next, let us consider the set 
$$
A=\left\{\frac{\ln p}{\ln(p+q)}:\, p,q\in\bbN, p\geq 3,\, q|(p-1)\right\}.
$$
 This set is dense in $(0,1)$. Indeed, let  $0<a<b<1$. Then, using that $\frac{\ln p}{\ln(q+p)}=\frac{1}{\log_p(q+p)}$, for all $p\geq3$ and  denoting $k=\frac{q}{(p-1)}$, \; $k\in\bbN$ we have 
$$
\frac{1}{\log_p(q+p)}\in(a,b)\,\Longleftrightarrow\,\,
p^{1/b-1}<k+1-\frac{1}{p} <p^{1/a-1}.
$$
 Since $\DS \lim_{p\to \infty} p^{1/a-1}-p^{1/b-1}=\infty$,
we can find  
a number $k$ satisfying the above inequality provided that $p$ is large enough.

Thus we fix some integers $p\geq 3$, $k\geq1$ and let $q=(p-1)k$.
Set $$\al=\al_{p,p+q}=\frac{1}{\log_p(q+p)}=\frac{\ln p}{\ln (p+q)}.$$ 
 Let $f:[-1,1]\to[-1,1]$ be the odd function whose restriction to $[0,1]$ is the Cantor 
function from \S \ref{SSCantor}.
We will now show that $S_nf$ does not obey Edgworth expansions of any order $r>\frac{\al+1}{\al-1}$.
Let $r=r(\al)$ be the smallest integer so that $r>\frac{\al+1}{\al-1}$, where $\al=\al_{p,q}$. Let us take
$\frac{\al}{1-\al}<c<\frac r2-\frac12$ and set
 $k_N=p^{[c\log_p N]}$. 
Then
$$
\bbP(k_NS_N\not\in\bbZ)\leq N \bbP(k_N f\not\in\bbZ)=N \left(\frac{p}{p+q}\right)^{[c\log_p N]}
=O\left(N^{1-[(1/\alpha)-1]c}\right)=o_{N\to\infty} (1) $$ 
where the second step follows from Lemma \ref{LmHRCantor} and the last step follows
since
$$c\left(\frac{1}{\al}-1\right)=\frac{c(1-\al)}{\al}>1.$$ 

Let $p_N=p^{[c\log_p N]}\sig_N=k_N\sig_N$ which is of order $N^{c+1/2}$. Then 
$$
 \lim_{N\to\infty} \bbP(S_N/\sig_N\in (p_N)^{-1}\bbZ)=1.
$$
Thus, by considering points in $(p_N)^{-1}\bbZ$ which are of order $1$, we find that if $C$ is large enough then denoting
$$
 m_N=\text{argmax}\{\bbP(S_N/\sig_N=k/p_N): |k/p_N|\leq C\}
$$
 and recalling that $c+\frac{1}{2}>r$
we have 
\begin{equation}\label{InCon}
\bbP(S_N/\sig_N={m_N}/p_N)\geq C_1p_N^{-1}\geq C_2N^{-c-1/2}\geq {C_3}\sig_N^{-r}
\end{equation}
where $C_1,C_2$ and $C_3$  are positive constants. On the other hand, if $S_N$ 
obeyed expansions of order $r$ then
$$
\bbP\left(\frac{S_N}{\sig_N}=\frac{m_N}{p_N}\right)
\leq\lim\sup_{\del\to 0^+}
\left[\bbP\left(\frac{S_N}{\sig_N}\leq \frac{m_N}{p_N}\right)-
\bbP\left(\frac{S_N}{\sig_N}\leq \frac{m_N}{p_N}-\delta \right)\right]
=o(\sig_N^{-r})
$$
which is inconsistent with \eqref{InCon}.
\end{proof}

\end{document}